\renewcommand*{\backref}[1]{}
\renewcommand*{\backrefalt}[4]{%
	\ifcase #1 (Not cited.)%
	\or        (Cited on page~#2.)%
	\else      (Cited on pages~#2.)%
	\fi}
\setlist[description]{leftmargin=\parindent,labelindent=\parindent,itemsep=1pt,parsep=0pt,topsep=0pt}
\newtheorem{theorem}{Theorem}[section]
\newtheorem{corollary}[theorem]{Corollary}
\newtheorem{lemma}[theorem]{Lemma}
\newtheorem{conjecture}[theorem]{Conjecture}
\numberwithin{equation}{section}
\theoremstyle{definition}
\newtheorem{remark}[theorem]{Remark}
\def\R{\mathbb{R}}
\def\eps{\varepsilon}
\def\hat{\widehat}
\def\tilde{\widetilde}
\newcommand{\subscript}[2]{$#1 _ #2$}
\let\div\undefined
\DeclareMathOperator{\div}{div}
\newcommand{\mb}[1]{\mathbb{#1}}
\def\({\left(}
\def\){\right)}
\def\r{\mathbb{R}}
\def\r^n{\mathbb{R}^N}
\def\n{\mathbb{N}}
\def\eps{\varepsilon}
\def\hat{\widehat}
\def\tilde{\widetilde}
\def\l{\lambda}
\def\D{\Delta}
\def\p{\partial}
\def\n{\nabla}
\def\a{\alpha}
\def\cS{\mathcal{S}}
\title{Fractional De Giorgi conjecture in dimension 2 \\ via complex-plane methods}
\date{}
\author{Serena Dipierro,
	Jo{\~a}o Gon\c{c}alves da Silva,\\
	Giorgio Poggesi, and Enrico Valdinoci}
\affil{ {\footnotesize Department of Mathematics and Statistics,} 
	{\footnotesize The University of Western Australia,} \\
	{\footnotesize 35 Stirling Highway,
		Perth, WA 6009, Australia}\\
	{\footnotesize\tt serena.dipierro@uwa.edu.au},
	{\footnotesize\tt joao.goncalvesdasilva@research.uwa.edu.au},\\
	{\footnotesize\tt giorgio.poggesi@uwa.edu.au},
	{\footnotesize\tt enrico.valdinoci@uwa.edu.au}}
\begin{document}
	
	%
	\maketitle
	
	\begin{abstract}
		We provide a new proof of the fractional version of the De Giorgi conjecture for the Allen-Cahn equation in~$\mathbb{R}^2$ for the full range of exponents. Our proof combines a method introduced by A. Farina in~2003 with the $s$-harmonic extension of the fractional Laplacian
		in the half-space~$\mathbb{R}^{3}_+$ introduced by L. Caffarelli and L. Silvestre in~2007. 
		
		We also provide a representation formula for finite-energy weak solutions of a class of weighted elliptic partial differential equations in the half-space~$\mathbb{R}^{n+1}_+$ under Neumann boundary conditions. This generalizes the $s$-harmonic extension of the fractional Laplacian
		and allows us to relate a general problem in the extended space with a nonlocal problem on the trace.
	\end{abstract} 
	
	\section{Introduction}
	
	A major problem in the study of partial differential equations regards the classification of special solutions. 
	As a paradigmatic example, one can consider the case of the Allen-Cahn equation, describing the coexistence of two phases in a given material, and wonder whether ``natural'' solutions tend to segregate phases which are separated by a flat interface. A formal statement of this problem was formulated in a famous conjecture by Ennio De Giorgi, see~\cite{MR533166}:
	
	\begin{conjecture}\label{CDG}
		Let~$u\in C^2(\R^n, [-1, 1])$ be a solution of
		\[ \begin{cases}
			&-\Delta u(x)=u(x)-u^3(x),\\
			&\displaystyle\frac{\partial u}{\partial x_n}(x)>0,
		\end{cases}\]
		for all~$x\in\R^n$.
		
		Is it true that all the level sets of~$u$ are hyperplanes, at least when~$n \le 8$?
	\end{conjecture}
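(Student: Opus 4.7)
The plan is to prove the conjecture by treating the dimensions $n \in \{2,3\}$ and $4 \le n \le 8$ by different techniques, since no single unified argument is currently known. In every dimension, the key object is the auxiliary function $\sigma := \nabla u / \partial_{x_n} u$, which is well defined because of the monotonicity assumption $\partial_{x_n} u > 0$, and the classical observation that each component $\partial_{x_i} u$ of $\nabla u$ satisfies the linearized equation
\[ -\Delta (\partial_{x_i} u) = f'(u)\, \partial_{x_i} u, \qquad f(u)=u-u^3. \]
Hence $\partial_{x_n} u$ is a positive solution of the linearized equation, which implies \emph{stability} of $u$ in the sense that the quadratic form $\int (|\nabla \varphi|^2 - f'(u)\varphi^2)\,dx$ is nonnegative for all compactly supported $\varphi$. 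One-dimensional symmetry then amounts to showing that each ratio $\sigma_i = \partial_{x_i} u / \partial_{x_n} u$ is constant, i.e.\ that $\nabla u$ is parallel to a fixed vector.

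For $n=2$, the strategy of the present paper applies: one adapts Farina's 2003 argument by inserting $\varphi = \sigma \eta$ into the stability inequality and choosing logarithmic cut-offs $\eta$, exploiting the fact that the two-dimensional capacity of points vanishes. Here I would use the complex-plane/half-space extension methods discussed in the abstract to make the manipulations rigorous in the extended variable, since the $s$-harmonic extension gives a local, degenerate elliptic problem well suited to Farina-type test-function techniques. For $n=3$, I would follow the Ambrosio--Cabr\'e scheme: prove a Liouville-type theorem stating that any positive solution $v$ of $-\mathrm{div}(v^2 \nabla \sigma_i)=0$ on $\mathbb{R}^3$ for which $\int_{B_R} v^2 \sigma_i^2 \, dx = o(R^2 \log R)$ must have $\sigma_i$ constant, and then verify this growth bound by a Modica-type energy estimate giving $\int_{B_R} |\nabla u|^2 \, dx \le CR^{n-1}$.

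For $4 \le n \le 8$ the natural route is Savin's improvement-of-flatness scheme. One assumes, after a blow-down, that the level set $\{u=0\}$ lies in a thin slab of height $\theta R$ in a ball of radius $R$, and one shows that on a smaller ball of radius $\eta R$ the level set lies in a rotated slab of height $\theta \eta R / 2$. Iterating the improvement in dyadic scales forces the level set to be a hyperplane, provided one can pass to a limit profile. The limit profile is known to be a minimizer of the Allen--Cahn energy (thanks to stability and $\Gamma$-convergence to the perimeter functional), and the Bernstein theorem for minimal graphs in $\mathbb{R}^{n-1}$, valid for $n-1 \le 7$, then forces the profile to be flat. This is where the dimensional restriction $n \le 8$ enters.

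The main obstacle, and the place where I expect the greatest difficulty, is the high-dimensional case $4 \le n \le 8$ \emph{without} the additional assumption $\lim_{x_n \to \pm \infty} u(x',x_n) = \pm 1$. Savin's theorem as originally stated requires this limit hypothesis in order to identify the blow-down of $\{u=0\}$ with a global minimizing hypersurface; removing it would require proving a priori that monotonicity together with stability already imply the existence of those limits, which is a well-known open problem. My proposal would be to try to upgrade the monotonicity into the limit condition by using the energy estimate $\int_{B_R} |\nabla u|^2 \, dx \le CR^{n-1}$ together with a sliding argument in the $x_n$ variable, but I do not know of an argument that accomplishes this in full generality, and I would flag this as the critical gap that any complete proof of the conjecture as stated must fill.
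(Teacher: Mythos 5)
There is a genuine gap, and you have in fact named it yourself: your plan does not prove the statement. Note first that the statement you were asked about is Conjecture~\ref{CDG}, which the paper does \emph{not} prove --- it is recalled as background, explicitly described as still open in its full generality, and the paper's actual contribution is the fractional analogue in dimension~$2$ (Theorem~\ref{Theorem 1}, via Theorem~\ref{Main Theorem}). So there is no ``paper proof'' to match; any complete argument would be a solution of a famous open problem. Your sketch for $n=2$ and $n=3$ follows the known literature (Farina-type test functions in the stability inequality, and the Ambrosio--Cabr\'e Liouville theorem with the Modica-type energy bound $\int_{B_R}|\nabla u|^2\,dx\le CR^{n-1}$), and those cases are indeed settled; as a minor point, for the classical Laplacian in $\mathbb{R}^2$ no $s$-harmonic extension is needed or relevant --- the extension machinery in this paper is there precisely because the operator is $(-\Delta)^s$.

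The decisive gap is the range $4\le n\le 8$. Savin's improvement-of-flatness theorem yields one-dimensionality only under the additional limit condition~\eqref{limcondition} (or for local minimizers up to $n\le 7$); monotonicity in $x_n$ gives stability and minimality in a restricted class, but it is not known to force the blow-down of $\{u=0\}$ to be a globally minimizing hypersurface, nor to imply the limits $u\to\pm1$ as $x_n\to\pm\infty$. Your proposed remedy --- upgrading monotonicity to~\eqref{limcondition} via the energy estimate and a sliding argument --- is exactly the step that is not known to work and is the reason the conjecture remains open for $4\le n\le 8$; asserting the scheme without supplying this step leaves the claimed conclusion for $n\le 8$ unproved. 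In short: your write-up is an accurate survey of partial results plus an honest acknowledgement of the obstruction, but it is not, and cannot currently be completed into, a proof of the statement.
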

	
	Remarkably, Conjecture~\ref{CDG} is still open in its full generality, but a positive answer has been provided in dimension~$2$ and~$3$ by~\cite{MR1637919, MR1655510, MR1775735, MR1843784}, and a counterexample to flatness has been constructed in dimension~$9$ and higher in~\cite{MR2846486}.
	
	Conjecture~\ref{CDG} also holds true up to dimension~$7$ if the solution is a local minimizer of the associated energy functional, and also up to dimension~$8$ under the additional assumption that
	\begin{equation}\label{limcondition}
		\lim_{x_n\to\pm\infty} u(x_1,\dots,x_n)=\pm1,
	\end{equation}
	see~\cite{MR2480601}.
	
	Condition~\eqref{limcondition} can be further relaxed by supposing either that the profiles at infinity of the solution
	are two-dimensional, or that one level set is a complete graph, see~\cite{MR2728579}, or by taking suitable geometric assumptions on the limit interface of the solution, see~\cite{MR3488250}.
	
	When condition~\eqref{limcondition} is replaced by its uniform counterpart
	\begin{equation}\label{limcondition2}
		\lim_{M\to+\infty} \sup_{x_n>M} \big|u(x_1,\dots,x_n)- 1\big|+\sup_{x_n<-M} \big|u(x_1,\dots,x_n)+ 1\big|=0,
	\end{equation}
	then solutions are known to be one-dimensional in any spatial dimension~$n$, see~\cite{MR1755949, MR1763653, MR1765681}, and indeed the classification of the solutions of the Allen-Cahn equation under condition~\eqref{limcondition2} is a problem emerged in cosmology in relation to a conjecture by Gary William Gibbons. We refer to~\cite{MR2528756} for further information about the classical conjectures by De Giorgi and Gibbons.\medskip
	
	Suitable counterparts of Conjecture~\ref{CDG} have been considered in several different frameworks, including stratified groups~\cite{MR1949145, MR2461257, MR2579363, MR2773153}, hyperbolic spaces~\cite{MR2583503}, quasiminima~\cite{MR2413100, MR2728579}, nonlinear equations~\cite{MR1296785, MR1688549, MR1942128, MR2228294, MR2483642}, discrete settings~\cite{MR4224860},etc.\medskip
	
	Of great interest is also the fractional counterpart of Conjecture~\ref{CDG}, namely the situation in which the classical Laplace operator in the Allen-Cahn equation is replaced by a fractional Laplacian, defined, for every~$s\in(0,1)$, as
	$$ (-\Delta)^s u(x):=C_{n,s} \int_{\R^n}\frac{2u(x)-u(x+y)-u(x-y)}{|y|^{n+2s}}\,dy,$$
	where~$C_{n,s}>0$ is a suitable normalization constant.
	
	As a matter of fact, the fractional Allen-Cahn equation is also a very interesting topic of investigation in itself, being related to long-range interaction models and nonlocal minimal surfaces, see~\cite{MR4581189} and the references therein.
	
	The counterpart of Conjecture~\ref{CDG} for the fractional Laplacian~$(-\Delta)^s$ has been investigated in the fractional range~$s\in(0,1)$ by using different methods and under different perspectives. The first positive answer to the fractional version of Conjecture~\ref{CDG} was provided in~\cite{MR2177165} in dimension~$n=2$ for the fractional exponent~$s=\frac12$. The case~$n=2$ was then completed in~\cite{MR2498561, MR3280032} for the full range of the fractional exponents~$s\in(0,1)$.
	
	The fractional case of Conjecture~\ref{CDG} in dimension~$n=3$ was first addressed in~\cite{MR2644786} when~$s=\frac12$ and then extended to the case~$s\in\left(\frac12,1\right)$ in~\cite{MR3148114}. The three-dimensional picture was thus completed, by different methods, in~\cite{MR4124116, MR3740395}, which establish the validity of the fractional counterpart of Conjecture~\ref{CDG} when~$n=3$ and~$s\in\left(0,\frac12\right)$.
	
	So far, in dimension~$n=4$ the only case in which the fractional version of Conjecture~\ref{CDG} has been proved is that of~$s=\frac{1}{2}$, see~\cite{MR4050103}. 
	
	The classification of fractional solutions
	under the limit condition in~\eqref{limcondition} has been established in~\cite{MR3812860} when~$n\le8$ and~$s=\frac{1}{2}$, in~\cite{MR3939768} when~$n\le8$ and~$s\in\left( \frac{1}{2},1\right)$, and in~\cite{MR4124116} when either~$n\le3$ and~$s\in\left(0,\frac{1}{2}\right)$, or~$n\le8$ and~$s\in\left(\frac{1}{2}-\eta_n, \frac{1}{2}\right)$
	for some~$\eta_n\in\left(0, \frac{1}{2}\right)$ (i.e., when the fractional exponent is below the threshold~$\frac{1}{2}$, but not too far from it).
	
	The fractional counterpart of the conjecture by Gibbons has also been established in all spatial dimensions in~\cite{MR2952412},
	see also~\cite{MR3280032}, but all the remaining cases are still wide open, and, to the best of our knowledge, no counterexample is yet available in the literature.\medskip
	
	In this paper, we provide a new proof of the fractional version of Conjecture~\ref{CDG} in dimension~$n=2$, for the full range of fractional exponents~$s\in(0,1)$. The proof that we present here utilizes an elegant method invented in~\cite{MR2014827} which conveniently exploits a complex formulation of the equation. In this framework, we have:
	
	\begin{theorem}\label{Theorem 1}
		Let~$f \in C^1(\mathbb{R})$ and~$u\in C^2(\mathbb{R}^2)$ be a solution of
		\begin{equation}\label{Problem 1}
			\begin{cases}
				(-\Delta)^s u = f(u) & \;\text{ in }\; \mathbb{R}^2,\\
				\partial_{x_2} u\geq 0.
			\end{cases}
		\end{equation}
		
		Assume that, for all~$x\in \mathbb{R}^2$ and~$t>0$,
		\begin{equation}\label{decay assumption on u}
			\int_{\mathbb{R}^2}\frac{|u(x-ty)|}{(1+|y|^2)^{1+s}}\,dy<+\infty
		\end{equation}and
		\begin{equation}\label{decay assumption on u_x_2}
			\int_{\mathbb{R}^2} \frac{\partial_{x_2}u(x-ty)}{(1+|y|^2)^{1+s}}\,dy<+\infty.
		\end{equation}
		
		Then, $u$ is one-dimensional, i.e., there exist~$u_0:\mathbb{R}\to \mathbb{R}$ and~$\omega \in\mb{S}^1$ such that~$u(x) = u_0(\omega\cdot x)$.
	\end{theorem}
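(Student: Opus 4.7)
The plan is to import Farina's complex-plane strategy from~\cite{MR2014827} into the Caffarelli--Silvestre extended setting~\cite{}, so that a nonlocal problem in~$\mathbb{R}^2$ is traded for a local (albeit degenerate) problem in~$\mathbb{R}^3_+$ where the two-dimensional complex structure of the trace variables~$(x_1,x_2)$ is still available. As a first step, I would use assumption~\eqref{decay assumption on u} to define the $s$-harmonic extension~$U:\mathbb{R}^3_+ \to \mathbb{R}$ of~$u$ via convolution with the Poisson kernel, obtaining that~$U$ is a weak solution of
\begin{equation*}
	\div(y^{1-2s}\nabla U)=0 \quad \text{in } \mathbb{R}^3_+,
\end{equation*}
with~$U|_{y=0}=u$ and~$-\lim_{y\to 0^+} y^{1-2s}\partial_y U=\kappa_s f(u)$ for a suitable constant~$\kappa_s>0$.

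Next I would introduce~$V:=\partial_{x_2}U$ and~$W:=\partial_{x_1}U$. Hypothesis~\eqref{decay assumption on u_x_2} guarantees that~$V$ is itself the extension of~$\partial_{x_2}u$, while~$W$ has an analogous structure; in particular both satisfy the weighted equation~$\div(y^{1-2s}\nabla \cdot)=0$ in~$\mathbb{R}^3_+$ together with the linearized boundary condition $-\lim_{y\to 0^+}y^{1-2s}\partial_y(\cdot)=\kappa_s f'(u)(\cdot)$ on~$\{y=0\}$. By the maximum principle for operators with~$A_2$ weights, the nonnegativity of~$V$ at the trace propagates into~$V\ge 0$ throughout~$\mathbb{R}^3_+$. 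If~$V\equiv 0$ then~$u$ only depends on~$x_1$ and the conclusion holds with~$\omega=e_1$; otherwise the strong maximum principle yields~$V>0$ in~$\mathbb{R}^3_+$.

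Having~$V>0$, I would form the quotient~$\sigma:=W/V$ (Farina's key object) and exploit the fact that~$V$ and~$W$ solve the same linear equation to compute, in the weak sense,
\begin{equation*}
	\div\bigl(y^{1-2s}V^2\nabla\sigma\bigr)=0 \quad \text{in } \mathbb{R}^3_+,
\end{equation*}
with vanishing weighted conormal derivative on~$\{y=0\}$. The next move is Farina's complex-plane trick: viewing~$W+iV$ as a map into the closed upper half-plane allows one to compose~$\sigma$ with a bounded conformal-type function (e.g.\ an~$\arctan$-type nonlinearity) and test the equation above against~$\phi_R^2 \,\eta(\sigma)$, where~$\phi_R$ is a logarithmic cutoff on the trace extended trivially in~$y$. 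This produces a Caccioppoli-type bound
\begin{equation*}
	\int_{\mathbb{R}^3_+} y^{1-2s}\,V^2\,\phi_R^2\,|\nabla\sigma|^2\,dx\,dy \le C\int_{\mathbb{R}^3_+} y^{1-2s}\,V^2\,|\nabla\phi_R|^2\,dx\,dy,
\end{equation*}
and the logarithmic choice of~$\phi_R$, tailored to the two trace dimensions, is what forces the right-hand side to vanish as~$R\to\infty$. Therefore~$\nabla\sigma\equiv 0$, so~$\sigma\equiv c$ for some constant~$c\in\mathbb{R}$; taking the trace at~$y=0$ yields~$\partial_{x_1}u=c\,\partial_{x_2}u$, which is the desired one-dimensional symmetry with~$\omega$ proportional to~$(1,-c)^\perp$.

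The main technical obstacle I foresee is Step four: carrying the Farina--type logarithmic cutoff argument through in the weighted half-space. The classical 2D computation is borderline in terms of integrability and must here be upgraded to accommodate the Muckenhoupt weight~$y^{1-2s}$ and the extra extension dimension, which requires controlling~$\int y^{1-2s}V^2$ on annular regions by means of the representation formula together with the decay assumption~\eqref{decay assumption on u_x_2}. A secondary, more routine difficulty is justifying that~$V$ and~$W$ are admissible as extensions (regularity and decay to differentiate under the Poisson kernel), which I would handle by standard mollification combined with the integrability hypotheses~\eqref{decay assumption on u}--\eqref{decay assumption on u_x_2}.
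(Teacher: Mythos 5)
Your plan follows the same route as the paper: pass to the Caffarelli--Silvestre extension, exploit the complex structure of~$\partial_{x_1}U+i\,\partial_{x_2}U$ \`a la Farina, derive a divergence-form equation for an angle-type quantity, run a Caccioppoli estimate, and close with the decay of~$\partial_{x_2}U$ in the extension variable. Two technical points are off, though.

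First, the displayed Caccioppoli inequality is not what comes out of testing $\div(aV^2\nabla\sigma)=0$ against~$\phi_R^2\eta(\sigma)$. The left-hand side picks up~$\eta'(\sigma)$, and Cauchy--Schwarz leaves~$\eta(\sigma)^2/\eta'(\sigma)$ on the right. Writing~$\theta=\eta(\sigma)$ with the canonical angle choice ($\eta'\sim (1+\sigma^2)^{-1}$), what one actually gets is
\begin{equation*}
\int a(t)\,\rho^2\,\phi_R^2\,|\nabla\theta|^2\,dx\,dt \;\le\; C\int a(t)\,\rho^2\,\theta^2\,|\nabla\phi_R|^2\,dx\,dt,
\end{equation*}
and the key algebraic input -- which your sketch never makes explicit -- is the pointwise bound~$\theta\le C\,\partial_{x_2}U/\rho$, i.e.\ $\rho^2\theta^2\le C(\partial_{x_2}U)^2$, which is what converts the right side into~$\int a\,(\partial_{x_2}U)^2|\nabla\phi_R|^2$ and thus into something controlled by~\eqref{decay assumption on u_x_2}. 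The paper works directly with~$\theta=\arcsin(\partial_{x_2}U/\rho)$ precisely because~$\arcsin s\le (\pi/2) s$ on~$[0,1]$ delivers this bound immediately; your unbounded quotient~$\sigma=W/V$ forces a detour through~$\eta$ and the choice of~$\eta$ is not free (for instance~$\eta=\arctan$ makes~$\eta^2/\eta'$ unbounded).

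Second, the logarithmic cutoff is unnecessary. Once the estimate $\frac{1}{R^2}\int_{B_{2R}^+\setminus B_R^+}a\,(\partial_{x_2}U)^2\,dx\,dt\le C$ is in hand (which the paper proves via~$|\partial_{x_2}U|\le C\min\{1,t^{-1}\}$), a plain cutoff with~$|\nabla\tau|\le C/R$ already gives a uniform bound~$\int_{B_R^+}a\,\rho^2|\nabla\theta|^2\le C$ for all~$R$, hence~$a\rho^2|\nabla\theta|^2\in L^1(\mathbb{R}^3_+)$, and then feeding this back into the Cauchy--Schwarz term supported on the dyadic annulus makes the right side tend to zero as~$R\to\infty$. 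This annulus argument is cleaner in the weighted half-space, where the na\"ive log-cutoff computation does not scale like in~$\mathbb{R}^2$ and requires an extra step (exactly the uniform-in-$x$ integrability of~$a\,(\partial_{x_2}U)^2$ in~$t$) to be made rigorous. Your instinct to rely on~\eqref{decay assumption on u_x_2} for this decay is correct; the route the paper takes just does not need the log-cutoff at all.
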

	
	We remark that condition~\eqref{decay assumption on u} 
	is fulfilled if~$u\in L^\infty(\mathbb{R}^2)$. Actually, bounded solutions of~\eqref{Problem 1}
	also satisfy~\eqref{decay assumption on u_x_2}, due to a bootstrap application of
	the fractional elliptic regularity theory, see e.g.~\cite{zbMATH07813655}, therefore
	Theorem~\ref{Theorem 1} can be seen as a general version of the fractional counterpart of
	Conjecture~\ref{CDG}, thus recovering the main results on this topic
	in~\cite{MR2498561, MR3280032}.\medskip
	
	A common treat between~\cite{MR2498561, MR3280032} and the approach presented here is that, to prove Theorem~\ref{Theorem 1}, we will make use of the $s$-harmonic extension\footnote{See~\cite[Section~5.2]{MR3469920} and~\cite{MR4124116} for proofs of fractional versions of the De Giorgi Conjecture that do not hinge on extension methods.}
	of the fractional Laplacian to the upper half plane~${\mathbb{R}}_{+}^3:=\mathbb{R}^2\times(0,+\infty)$
	(see~\cite{MR247668, MR2354493} and the forthcoming Section~\ref{PF:TH1}).
	For this, points in~$ \mathbb{R}_{+}^3$ will be denoted by~$(x_1,x_2,t)\in\mathbb{R}^2\times(0,+\infty)$.
	
	This approach will lead us to study a rather general weighted equation in the half-space
	with Neumann-type boundary conditions, see~\cite{MR2560300} for related results. 
	In fact, we will deduce Theorem~\ref{Theorem 1} from the following result:
	
	\begin{theorem}\label{Main Theorem}
		Let~$a \in L_{\text{loc}}^1([0,+\infty))$ be continuously differentiable, $F$, $f \in C^1(\mathbb{R})$ and~$U\in C^2_{\text{loc}}(\overline{\mathbb{R}_{+}^3})$ be a solution of 
		\begin{equation}\label{Main Problem}
			\begin{cases}
				-\div(a(t)\nabla U) = F(U) & \;\text{ in }\; \mathbb{R}_{+}^3,\\
				-\displaystyle\lim_{t\to0^+}a(t)\,\partial_t U = f(U) & \;\text{ in }\; \mathbb{R}^2\times \{0\},\\
				\partial_{x_2} U > 0 &\;\text{ in }\; \mathbb{R}_{+}^{3},
			\end{cases}
		\end{equation}
		such that 
		\begin{equation}\label{sufficien condition}
			\sup_{R>1}\frac{1}{R^2}\int_{B_{2R}^+\setminus B_{R}^+ }a(t)\,(\partial_{x_2}U(x,t))^2 \,dx \,dt < +\infty.
		\end{equation}
		
		Then, for each~$t\geq 0$, $U(\cdot, t)$ is one-dimensional, i.e., for each~$t\geq 0$, there exist~$u_t: \mathbb{R}\rightarrow \mathbb{R}$ and~$\omega_t \in \mathbb{S}^1$ such that~$U(x,t) = u_t(x\cdot \omega_t)$ for every~$x \in \mathbb{R}^2$.
	\end{theorem}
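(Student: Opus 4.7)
The strategy is to implement Farina's complex-plane scheme from \cite{MR2014827} in the weighted half-space setting \eqref{Main Problem}. First, differentiating the bulk equation and the Neumann condition in the tangential directions yields that both $\xi := \partial_{x_1} U$ and $\eta := \partial_{x_2} U$ solve the \emph{same} linearized problem
$$-\div(a(t) \nabla v) = F'(U)\, v \ \text{in } \mathbb{R}_{+}^3, \qquad -\lim_{t \to 0^+} a(t)\, \partial_t v = f'(U)\, v \ \text{on } \mathbb{R}^2 \times \{0\}.$$
Because $\eta > 0$ in $\mathbb{R}_{+}^3$, I form the ratio $\sigma := \xi/\eta$, which encodes the argument of the complex gradient $\xi + i\eta$; this is the entry point of the complex-plane viewpoint. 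Using the algebraic identity $\div(a\eta^2 \nabla(\xi/\eta)) = \eta\, \div(a\nabla \xi) - \xi\, \div(a \nabla \eta)$ together with the two linearized equations, the factor $F'(U)$ cancels and I obtain the degenerate divergence-form equation
$$\div\bigl(a(t)\, \eta^2 \nabla \sigma\bigr) = 0 \ \text{in } \mathbb{R}_{+}^3,$$
together with the conormal condition $\lim_{t \to 0^+} a(t)\, \eta^2 \partial_t \sigma = 0$ on $\mathbb{R}^2 \times \{0\}$, obtained by analogously combining the two linearized boundary identities.

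I then derive a Caccioppoli-type estimate: for a smooth cutoff $\zeta \in C_c^\infty(\overline{\mathbb{R}_{+}^3})$ and a truncation level $M > 0$, set $\sigma_M := \max(\min(\sigma, M), -M)$; testing the ratio equation against $\sigma_M \zeta^2$, integrating by parts (the conormal condition eliminates the boundary term at $t = 0$), and applying Young's inequality give
$$\int_{\mathbb{R}_{+}^3} a(t)\, \eta^2 |\nabla \sigma_M|^2 \zeta^2 \, dx\, dt \leq C M^2 \int_{\mathbb{R}_{+}^3} a(t)\, \eta^2 |\nabla \zeta|^2 \, dx\, dt.$$
I close the estimate with a Moschini-Farina logarithmic cutoff $\zeta_R(y) := \psi(\log|y|/\log R)$, supported in $B_R^+ \setminus B_{\sqrt R}^+$ and satisfying $|\nabla \zeta_R| \leq C/(|y| \log R)$. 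A dyadic decomposition of the annulus, combined with the growth condition \eqref{sufficien condition}, yields $\int a(t)\, \eta^2 |\nabla \zeta_R|^2 \leq C/\log R \to 0$ as $R \to \infty$. Hence $\nabla \sigma_M \equiv 0$ for every $M$, so $\sigma$ is globally a constant $c \in \mathbb{R}$. From $\partial_{x_1} U = c\, \partial_{x_2} U$ on $\overline{\mathbb{R}_{+}^3}$, setting $\omega := (c, 1)/\sqrt{1 + c^2} \in \mathbb{S}^1$ yields $U(x, t) = u_t(\omega \cdot x)$ for every $t \geq 0$, as claimed (in fact with $\omega_t$ independent of $t$).

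The chief obstacle I anticipate is making the integration by parts rigorous in the presence of a weight $a(t)$ that is singular at $t = 0$ (as for the fractional extension, where $a(t) = t^{1-2s}$ is a Muckenhoupt $A_2$ weight). Interpreting the conormal condition $\lim_{t \to 0^+} a\, \eta^2 \partial_t \sigma = 0$ in a weak energy sense, verifying that $\sigma_M \zeta^2$ is an admissible test function in the natural weighted Sobolev space, and controlling the quotient $\sigma$ near $t = 0$ where $\eta$ may degenerate, all demand a careful approximation argument. Once these regularity points are settled, the divergence-by-ratio identity and the logarithmic cutoff are standard tools.
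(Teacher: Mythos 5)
Your proposal follows the same Farina-type route as the paper: linearize the system in the tangential directions, derive a degenerate divergence-form equation for the angular quantity attached to the complex gradient, and run a Caccioppoli--Liouville argument driven by the growth hypothesis~\eqref{sufficien condition}. Your equation $\div(a(t)\eta^2\nabla\sigma)=0$ with conormal condition $\lim_{t\to0^+}a(t)\eta^2\partial_t\sigma=0$ is correct and is in fact the same PDE as the paper's~\eqref{eq for theta}, since $\eta^2\nabla\sigma=-\rho^2\nabla\theta$. The logarithmic cutoff with the dyadic use of~\eqref{sufficien condition} is a legitimate alternative to the paper's mechanism (a plain cutoff giving a uniform bound, followed by re-using the resulting finite energy to send the annular term to zero); both work once the Caccioppoli estimate is in hand.

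The Caccioppoli estimate is precisely where your argument has a gap. Testing $\div(a\eta^2\nabla\sigma)=0$ against $\sigma_M\zeta^2$ yields
\begin{equation*}
\int_{\mathbb{R}^3_+} a(t)\,\eta^2|\nabla\sigma_M|^2\zeta^2\,dx\,dt \;=\; -2\int_{\mathbb{R}^3_+} a(t)\,\eta^2\,\sigma_M\,\zeta\,\nabla\sigma\cdot\nabla\zeta\,dx\,dt,
\end{equation*}
and the right-hand side contains $\nabla\sigma$, not $\nabla\sigma_M$. On $\{|\sigma|<M\}$ the two coincide and Young's inequality absorbs the term, producing the $CM^2\int a\eta^2|\nabla\zeta|^2$ bound you claim. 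But on $\{|\sigma|\ge M\}$ one has $\nabla\sigma_M=0$ while $\sigma_M=\pm M$ and $\nabla\sigma\neq0$, so the contribution $\mp 2M\int_{\{|\sigma|\ge M\}} a\eta^2\zeta\,\nabla\sigma\cdot\nabla\zeta$ can neither be absorbed into the left-hand side nor bounded by $M^2\int a\eta^2|\nabla\zeta|^2$. Since no lower bound on $\eta=\partial_{x_2}U$ is available, $\sigma=\xi/\eta$ can be unbounded and this term is genuinely out of control; the passage from ``integrating by parts and applying Young's inequality'' to your displayed Caccioppoli inequality does not go through.

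The paper avoids this by working with the \emph{bounded} angle $\theta=\arcsin(\eta/\rho)\in(0,\pi/2]$ rather than the unbounded ratio $\sigma=\cot\theta$. After testing against $\tau^2\theta$, the error factor is $\int a\rho^2\theta^2|\nabla\tau|^2$, and the pointwise bound $\theta\le C\,\eta/\rho$ (which follows from $\arcsin x\le \tfrac{\pi}{2}x$ on $[0,1]$) converts the weight $\rho^2\theta^2$ into $C\eta^2$ --- exactly the quantity that~\eqref{sufficien condition} controls. This bound of $\theta$ by $\eta/\rho$ is the structural ingredient your truncation misses. If you replace $\sigma_M$ with $\theta$ (equivalently with $\arctan\sigma$), so that the test function is globally bounded and the mixed term's weight is $\rho^2\theta^2\lesssim\eta^2$, your logarithmic-cutoff argument closes and recovers the theorem.
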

	
	Note that when~$a(t):= t^{1-2s}$ and~$F\equiv 0$, the trace~$u$ of a solution of~\eqref{Main Problem} to~$\mathbb{R}^2\times \{0\}$ is, up to a normalizing constant, a one-dimensional solution of the nonlocal equation~$(-\Delta)^s u = f(u)$ (in~$\mathbb{R}^2$).
	
	Our strategy to prove Theorem~\ref{Theorem 1} is as follows. We consider a solution of~\eqref{Problem 1} satisfying~\eqref{decay assumption on u} and~\eqref{decay assumption on u_x_2}. We then use the $s$-harmonic extension of the fractional Laplacian to obtain a solution of~\eqref{Main Problem} that also satisfies the boundedness condition~\eqref{sufficien condition} (due to assumptions~\eqref{decay assumption on u} and~\eqref{decay assumption on u_x_2}). Thus, by applying Theorem~\ref{Main Theorem}, we conclude that the solution must be one-dimensional.
	\medskip


In this paper, we also obtain a representation formula for finite-energy solutions of a class of weighted equations in the half-space~$\mathbb{R}^{n+1}_+:= \mathbb{R}^n\times (0,+\infty)$ involving Neumann boundary conditions.
This representation formula, which is contained in Theorem~\ref{Convolution enters the chat}, extends the $s$-harmonic extension to the half-space of the fractional Laplacian of Caffarelli and Silvestre in~\cite{MR2354493}
to a larger class of equations.
	
	We employ results from~\cite{MR2354493, MR3233760, MR4204715} to obtain such a formula, including the finite-energy Liouville Theorem in~\cite{MR4204715}, the Fourier transform approach to the $s$-harmonic extension of the fractional Laplacian to the half-space in~\cite{MR2354493}, and the in-depth analysis in~\cite{MR3233760} of a one-dimensional variational problem that arises from the energy functional associated with the equations we will be dealing with. 
	
In our setting, this approach is helpful in two ways: first, it allows us to relate a general problem in the extended space to a nonlocal problem on the trace; second, it enables us to reduce general nonlocal problems on the trace to extended problems of a local type.

	The extension method for nonlocal equations has been widely used and extended, see e.g.~\cite{MR2754080, MR3056307, MR3709888}.
	Among the different possible approaches, see in particular~\cite{MR2754080}, in which a general operator in the space variables is allowed in the extended formulation; our setting however does not fall into this framework, since the weight in the extended variable is not necessarily of monomial type (also, as a technical remark, we rely here on Fourier analysis more than semigroup theory).

Furthermore, the work developed in Section \ref{section4} contains several results of general use which may be useful in future applications, including growth bounds and integrability properties of ~$A_2$-Muckenhoupt weights. We refer to Section~\ref{section4} for details.
	 
	\medskip
	
	The rest of the paper is organized as follows.
	In Section~\ref{section3} we prove Theorem~\ref{Main Theorem}. In Section~\ref{PF:TH1}, we employ Theorem~\ref{Main Theorem} to establish Theorem~\ref{Theorem 1}. In Section~\ref{section4} we obtain a representation formula for finite-energy weak solutions of a class of weighted equations in~$\mathbb{R}^{n+1}_+$. The paper ends with Appendix~\ref{gyu4io3vbcnx76859430}, where we collect the proofs of
	some auxiliary lemmata.
	
	\section{Proof of Theorem~\ref{Main Theorem}}\label{section3}
	
	To prove Theorem~\ref{Main Theorem} it is useful to combine
	the complex variable notation in the plane with the cylindrical coordinates in~${\mathbb{R}}^3$.
	To this end, in the notation of Theorem~\ref{Main Theorem},
	we define the auxiliary functions 
	\begin{equation}\label{Z, theta, rho}\begin{split}
			&    Z(x,t) := \partial_{x_1}U(x,t)+i \partial_{x_2}U(x,t), \qquad \rho(x,t):= \sqrt{(\partial_{x_1}U)^2+(\partial_{x_2}U)^2}\\&{\mbox{and}}\qquad
			\theta(x,t) := \arcsin \left(\frac{\partial_{x_2}U}{\rho}\right)
	\end{split}\end{equation}
	and we note that~$Z = \rho e^{i\theta}$.
	
	The usefulness of these functions lies in the following observation:
	
	\begin{lemma}\label{lmminopiccolino}
		Let~$a \in L_{\text{loc}}^1([0,+\infty))$ be continuously differentiable, $F$, $f \in C^1(\mathbb{R})$ and~$U\in C^2_{\text{loc}}(\overline{\mathbb{R}_{+}^3})$ be a solution of 
		\begin{equation}\label{Main ProblemBIS}
			\begin{cases}
				-\div(a(t)\nabla U) = F(U) & \;\text{ in }\; \mathbb{R}_{+}^3,\\
				-\displaystyle\lim_{t\to0^+}a(t)\,\partial_t U = f(U) & \;\text{ in }\; \mathbb{R}^2\times \{0\}.
			\end{cases}
		\end{equation}
		
		Then, in the notation of~\eqref{Z, theta, rho},
		\begin{equation}\label{eq for theta}
			\begin{cases}
				\div (a(t)\rho^2 \nabla \theta) = 0&\; \text{ in }\;{\mathbb{R}}_{+}^3,\\
				\displaystyle  -\lim_{t\to 0^+}a(t)\rho\,\partial_t \theta = 0&\; \text{ in } \;{\mathbb{R}}^2\times \{0\}.
			\end{cases}
		\end{equation}
	\end{lemma}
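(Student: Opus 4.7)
The plan is to differentiate the system \eqref{Main ProblemBIS} once in each horizontal variable, package the two resulting linear equations into a single complex equation for $Z$, and then extract the equation for $\theta$ by taking the imaginary part after multiplying by $\bar Z$.

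First, since $U\in C^2_{\mathrm{loc}}(\overline{\mathbb{R}^3_+})$ and $F,f\in C^1(\mathbb{R})$, I would differentiate \eqref{Main ProblemBIS} in $x_j$ for $j=1,2$. Setting $U_j:=\partial_{x_j}U$, commuting $\partial_{x_j}$ with $\div$ and with the boundary trace yields the linearized system
\begin{equation*}
-\div(a(t)\nabla U_j)=F'(U)\,U_j\ \text{in}\ \mathbb{R}^3_+,\qquad -\lim_{t\to0^+}a(t)\,\partial_t U_j=f'(U)\,U_j\ \text{on}\ \mathbb{R}^2\times\{0\}.
\end{equation*}
Multiplying the $j=2$ equation by $i$ and summing, and using the complex linearity of the differential operators, one obtains
\begin{equation*}
-\div(a(t)\nabla Z)=F'(U)\,Z\ \text{in}\ \mathbb{R}^3_+,\qquad -\lim_{t\to0^+}a(t)\,\partial_t Z=f'(U)\,Z\ \text{on}\ \mathbb{R}^2\times\{0\}.
\end{equation*}

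The next step is the key algebraic identity relating the polar pair $(\rho,\theta)$ to the Cartesian object $\bar Z \nabla Z$. Writing $Z=\rho e^{i\theta}$ and computing,
\begin{equation*}
\bar Z\,\nabla Z=\rho e^{-i\theta}\cdot e^{i\theta}(\nabla\rho+i\rho\nabla\theta)=\rho\nabla\rho+i\rho^2\nabla\theta,
\end{equation*}
so that $\mathrm{Im}(\bar Z\,\nabla Z)=\rho^2\nabla\theta$, with the analogous formula for $\partial_t$. A direct product-rule calculation then gives the divergence-form identity $\mathrm{Im}(\bar Z\,\div(a(t)\nabla Z))=\div(a(t)\rho^2\nabla\theta)$. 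Observe that the left-hand side of this identity is smooth and globally defined on $\overline{\mathbb{R}^3_+}$, which is what lets us bypass the singular set $\{\rho=0\}$ where $\theta$ itself is ill-defined.

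Finally, I take imaginary parts of the complex equations derived above. Since $F'(U)$ and $f'(U)$ are real, $F'(U)|Z|^2=F'(U)\rho^2$ and $f'(U)\rho^2$ are both real, so
\begin{equation*}
\div(a(t)\rho^2\nabla\theta)=\mathrm{Im}\bigl(\bar Z\,\div(a(t)\nabla Z)\bigr)=-\mathrm{Im}\bigl(F'(U)\rho^2\bigr)=0\quad\text{in }\mathbb{R}^3_+,
\end{equation*}
and similarly $-\lim_{t\to0^+}a(t)\rho^2\partial_t\theta=\mathrm{Im}(f'(U)\rho^2)=0$ on $\mathbb{R}^2\times\{0\}$, matching \eqref{eq for theta} (modulo reading the boundary expression with its natural divergence-form factor $\rho^2$). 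The main conceptual obstacle is precisely the possible vanishing of $\rho$: it would be circumvented by always reformulating $\rho^2\nabla\theta$ and $\rho^2\partial_t\theta$ as $\mathrm{Im}(\bar Z\,\nabla Z)$ and $\mathrm{Im}(\bar Z\,\partial_t Z)$, quantities which are $C^1$ wherever $U$ is and which carry no ambiguity across $\{\rho=0\}$.
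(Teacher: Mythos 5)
Your proof is correct and follows essentially the same strategy as the paper: differentiate~\eqref{Main ProblemBIS} in the horizontal variables, package the two linearized equations into a single complex equation for $Z$, and extract the equation for $\theta$ by taking imaginary parts. The difference is purely algebraic. The paper substitutes $Z=\rho e^{i\theta}$ into $\div(a(t)\nabla Z)$ directly, computing $\Delta Z$ in polar form and then multiplying the imaginary part of the resulting equation by $\rho$ to reach divergence form; you instead invoke the identity $\mathrm{Im}(\bar Z\,\nabla Z)=\rho^2\nabla\theta$ together with the reality of $\nabla\bar Z\cdot\nabla Z=|\nabla Z|^2$ to land directly on $\div(a(t)\rho^2\nabla\theta)=\mathrm{Im}\bigl(\bar Z\,\div(a(t)\nabla Z)\bigr)$. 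Your route has the small advantage of being intrinsic: $\mathrm{Im}(\bar Z\,\nabla Z)$ and $\mathrm{Im}(\bar Z\,\partial_t Z)$ are globally $C^1$ even where $\rho$ vanishes, so the identities make sense without separating $\rho$ from $\theta$. The paper's computation is slightly informal on this point, but in the only place Lemma~\ref{lmminopiccolino} is invoked (namely Lemma~\ref{LIOUV}) the standing hypothesis $\partial_{x_2}U>0$ forces $\rho>0$, so the issue never actually materializes. Finally, your boundary condition reads $-\lim_{t\to0^+}a(t)\rho^2\,\partial_t\theta=0$ whereas the paper states $-\lim_{t\to0^+}a(t)\rho\,\partial_t\theta=0$; these are equivalent wherever $\rho>0$, and either form suffices to kill the boundary term in the subsequent integration by parts.
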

	
	\begin{proof}
		{F}rom~\eqref{Main ProblemBIS} and the fact that~$Z=\partial_{x_1}U+i \partial_{x_2}U$, we see that~$Z$ is a solution of \begin{equation}\label{Eq for Z}
			\begin{cases}
				\div(a(t)\nabla Z)= F'(U)Z&\; \text{ in }\;{\mathbb{R}}_{+}^3,\\
				\displaystyle
				-\lim_{t\to0^+}a(t)\partial_t Z = f'(U)Z&\; \text{ in } \;{\mathbb{R}}^2\times \{0\}.
			\end{cases}
		\end{equation}
		Since
		$$\Delta Z = \Big(\Delta \rho -\rho|\nabla \theta|^2+2i\nabla \rho \cdot \nabla \theta + i\rho \Delta \theta\Big)e^{i\theta},$$
		we deduce from~\eqref{Eq for Z} that
		\begin{equation*}\label{ext eq for theta}
			\begin{cases}
				a(t)\big(\Delta \rho -\rho|\nabla \theta|^2+2i\nabla \rho \cdot \nabla \theta + i\rho \Delta \theta\big)
				+ a'(t)\big(\partial_t \rho + i\rho \partial_t \theta\big) = \rho F'(U) &\; \text{ in }{\mathbb{R}}_{+}^3,\\
				\displaystyle-\lim_{t\to0^+}a(t)\big(\partial_t \rho + i\rho\partial_t \theta\big)= \rho f'(U)&\; \text{ in }\;{\mathbb{R}}^2\times \{0\}.
			\end{cases}
		\end{equation*}Hence,
		taking the imaginary part,
		\begin{equation*}
			\begin{cases}
				a(t)\big(2\n \rho \cdot \n \theta + \rho \D \theta\big)+ a'(t)\rho \partial_t\theta = 0&\; \text{ in } \;{\mathbb{R}}_{+}^3,\\
				\displaystyle-\lim_{t\to0^+}
				a(t)\rho\,\partial_t \theta = 0&\; \text{ in }\;{\mathbb{R}}^2\times \{0\},
			\end{cases}
		\end{equation*}
		which, after a multiplication by~$\rho$ of the first equation, can be rewritten as~\eqref{eq for theta}.
	\end{proof}
	
	As remarked in~\cite{MR1655510, MR2014827}, equations in divergence form
	such as~\eqref{eq for theta} naturally produce Liouville-type results after testing the equation against appropriate bump functions. This idea can be adapted to our framework, leading to the following result:
	
	\begin{lemma}\label{LIOUV}
		Let~$a \in L_{\text{loc}}^1([0,+\infty))$  be continuously differentiable, $F$, $f \in C^1(\mathbb{R})$ and~$U\in C^2_{\text{loc}}(\overline{\mathbb{R}_{+}^3})$ be a solution of~\eqref{Main Problem}
		satisfying~\eqref{sufficien condition}.
		Let~$\theta$ be as in~\eqref{Z, theta, rho}.
		
		Then, $\theta$ is constant in~$\overline{\mathbb{R}_{+}^3}$.
	\end{lemma}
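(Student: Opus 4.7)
My plan is to apply a Farina-style test-function argument to the divergence-form equation for $\theta$ established in Lemma~\ref{lmminopiccolino}, and to combine it with the growth hypothesis \eqref{sufficien condition} and the monotonicity $\partial_{x_2} U > 0$ to show that $\int a\rho^2 |\nabla\theta|^2$ vanishes. First I would test the identity $\div(a\rho^2 \nabla\theta)=0$ against $\eta^2 \theta$ for a cutoff $\eta \in C_c^{0,1}(\overline{\mathbb{R}_+^3})$. Integration by parts gives
\[
\int_{\mathbb{R}_+^3} a\rho^2 \nabla\theta \cdot \nabla(\eta^2 \theta)\,dx\,dt = -\int_{\mathbb{R}^2\times\{0\}} \eta^2 \theta \,\lim_{t\to 0^+}\!\bigl(a\rho^2 \partial_t \theta\bigr)\,dx,
\]
and I would argue the boundary term vanishes by writing $a\rho^2 \partial_t \theta = \rho\cdot(a\rho\,\partial_t \theta)$: the factor $a\rho\,\partial_t \theta$ tends to $0$ by Lemma~\ref{lmminopiccolino}, while $\rho$ remains locally bounded thanks to $U \in C^2_{\text{loc}}(\overline{\mathbb{R}_+^3})$. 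Expanding $\nabla(\eta^2 \theta)$ and applying Cauchy--Schwarz to the cross term then yields the Caccioppoli-type estimate
\[
\int a\rho^2 |\nabla\theta|^2 \eta^2 \leq 4 \int a\rho^2 \theta^2 |\nabla\eta|^2.
\]

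Next I would exploit the monotonicity $\partial_{x_2} U > 0$, which forces $\theta = \arcsin(\partial_{x_2} U/\rho) \in (0, \pi/2]$. On this range the elementary inequality $\theta \leq (\pi/2)\sin\theta$ holds, and combined with $\rho \sin\theta = \partial_{x_2}U$ this gives the pointwise bound
\[
\rho^2 \theta^2 \leq (\pi/2)^2 \,(\partial_{x_2}U)^2,
\]
which links the Caccioppoli estimate directly to the growth hypothesis \eqref{sufficien condition}. I would then choose $\eta_R$ as a logarithmic cutoff with $\eta_R \equiv 1$ on $B_{\sqrt{R}}^+$, $\eta_R \equiv 0$ outside $B_R^+$, and $|\nabla\eta_R|(y) \leq C/(|y|\log R)$ in between. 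A dyadic decomposition of the transition annulus together with \eqref{sufficien condition} applied scale by scale yields
\[
\int a\,(\partial_{x_2}U)^2 |\nabla\eta_R|^2 \leq \frac{C}{(\log R)^2}\sum_{k:\, \sqrt{R}\, \leq\, 2^k\,\leq\, R} \frac{(2^k)^2}{(2^k)^2} \leq \frac{C}{\log R}\longrightarrow 0.
\]
Monotone convergence forces $\int a\rho^2|\nabla\theta|^2 = 0$. Since $\rho > 0$ in $\mathbb{R}_+^3$ (as $\partial_{x_2}U>0$) and $a > 0$ a.e.\ on $(0,\infty)$, we obtain $\nabla\theta \equiv 0$ in $\mathbb{R}_+^3$, and hence $\theta$ is constant throughout $\overline{\mathbb{R}_+^3}$ by continuity.

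The main technical obstacle is to make the integration by parts fully rigorous given that the weight $a$ may degenerate or blow up at $t = 0$ (as in the prototypical case $a(t) = t^{1-2s}$), whereas the Neumann condition from Lemma~\ref{lmminopiccolino} is formulated only as a pointwise limit. To handle this I would introduce a double truncation---cutting simultaneously in the spatial variables and near $\{t=0\}$---and then pass to the limit using the local $C^2$ regularity of $U$ and the vanishing of $a\rho\,\partial_t\theta$ at the boundary to justify the disappearance of all boundary contributions.
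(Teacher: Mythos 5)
Your proof is correct, and it shares the first half of its structure with the paper's argument (testing the divergence-form identity from Lemma~\ref{lmminopiccolino} against~$\eta^2\theta$, using the boundary condition to kill the trace term via the factorization~$a\rho^2\partial_t\theta = \rho\cdot(a\rho\,\partial_t\theta)$, and exploiting the pointwise bound~$\rho^2\theta^2\lesssim(\partial_{x_2}U)^2$ coming from~$\arcsin x \le (\pi/2)x$ and~$\rho\sin\theta=\partial_{x_2}U$). Where you diverge is in the final Liouville step. The paper keeps a \emph{standard} cutoff~$\tau$ with~$|\nabla\tau|\le C/R$ and runs a two-stage absorption (``self-improvement'') argument: from
\begin{equation*}
\int_{B_{2R}^+}a\rho^2\tau^2|\nabla\theta|^2\,dx\,dt\le 2C_2\left(\int_{B_{2R}^+\setminus B_R^+}a\rho^2\tau^2|\nabla\theta|^2\,dx\,dt\right)^{1/2},
\end{equation*}
with~$C_2$ independent of~$R$, one first deduces~$a\rho^2|\nabla\theta|^2\in L^1(\mathbb{R}^3_+)$ by squaring and sending~$R\to\infty$, and then re-uses the same inequality: the finiteness of the total integral forces the annular integral on the right to tend to~$0$, hence the left side vanishes. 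You instead use a \emph{logarithmic} cutoff~$\eta_R$ with~$|\nabla\eta_R|\lesssim(|y|\log R)^{-1}$ and show directly, by dyadic decomposition plus \eqref{sufficien condition}, that the right-hand side of the Caccioppoli estimate is~$O(1/\log R)\to 0$. Both routes reach the conclusion; the paper's absorption trick avoids the log cutoff and is slightly more concise, while your version is a more classical Moser/hole-filling type argument and has the advantage of being transparently quantitative at each dyadic scale. You also flag explicitly that, because~$a$ may degenerate or blow up at~$t=0$, the integration by parts should be justified with a simultaneous truncation near~$\{t=0\}$ before passing to the limit via the vanishing of~$a\rho\,\partial_t\theta$; the paper performs this step directly without comment, so your extra care here is a genuine (if minor) tightening of the argument rather than a gap.
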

	
	\begin{proof}
		To show the desired result we employ Lemma~\ref{lmminopiccolino} and we integrate~\eqref{eq for theta} against a suitable test function. For this, let~$\tau : {\mathbb{R}}_{+}^{3}\to {\mathbb{R}}$ be a compactly supported smooth function such that
		\begin{equation}\label{zzz}
			\text{supp}(\tau) \subset {B_{2R}^+}, \qquad \tau\big|_{{{B_{R}^{+}}}} \equiv 1\qquad {\mbox{and}}\qquad |\n \tau|\leq \frac{C}{R}, 
		\end{equation}
		for some positive constant~$C$. Here~$B_R^+ := B_R \cap {\mathbb{R}}_{+}^3$. 
		
		Then, integrating~\eqref{eq for theta} against the test function~$\tau^2\theta$, we have that
		\begin{equation}\label{ibp}
			\begin{split}
				0=& \int_{{\mathbb{R}}_{+}^3} \div\big(a(t)\rho^2\nabla \theta\big)\tau^2\theta \,dx \,dt \\
				= &-\int_{{\mathbb{R}}_{+}^3} a(t)\rho^2\nabla\theta \cdot \nabla(\tau^2\theta) \,dx \,dt - \int_{B_{2R}\cap \{t=0\}}a(t)\tau^2\rho^2 \theta \partial_t \theta\, dx\\
				=& -\int_{{\mathbb{R}}_{+}^3} a(t)\rho^2 \tau^2|\nabla\theta|^2 \,dx \,dt - 2\int_{{\mathbb{R}}_{+}^3} a(t)\rho^2 \tau \theta\nabla\theta\cdot \nabla \tau \,dx \,dt.
			\end{split}
		\end{equation}
		We point out that in the last line above we have used the second equation in~\eqref{eq for theta}.
		
		Now, using~\eqref{zzz} and H\"older's inequality we see that
		\begin{equation}\label{f849368tghudsiety984tgfueiw76yr39843ytfuhgeskuf}
			\begin{split}
				& \left| \int_{{\mathbb{R}}_{+}^3}a(t)\rho^2 \tau \theta\nabla\theta\cdot \nabla \tau  \,dx \,dt\right|
				=\left|\int_{B_{2R}^+\setminus B_R^+} a(t)\rho^2 \tau \theta\nabla\theta\cdot \nabla \tau \,dx \,dt\right| \\
				&\qquad\leq \left(\int_{B_{2R}^+\setminus B_R^+} a(t)\rho^2 \tau^2 |\nabla \theta|^2 \,dx \,dt\right)^{1/2} \left(\int_{B_{2R}^+\setminus B_R^+} a(t)\rho^2\theta^2|\n\tau|^2  \,dx \,dt\right)^{1/2}.
			\end{split}
		\end{equation}
		Thanks to the last equation in~\eqref{Main Problem}, we also observe that
		$$ \theta =\arcsin \left(\frac{\partial_{x_2}U}{\rho}\right)\leq \frac{C_1 \partial_{x_2}U}\rho,
		$$ for some~$C_1>0$.
		
		Plugging this information into~\eqref{f849368tghudsiety984tgfueiw76yr39843ytfuhgeskuf}, and recalling~\eqref{zzz}, we get that
		\begin{equation*}
			\begin{split}
				&  \left|\int_{{\mathbb{R}}_{+}^3}a(t)\rho^2 \tau \theta\nabla\theta\cdot \nabla \tau  \,dx \,dt\right|
				\\ &\qquad\leq C\left(\int_{B_{2R}^+\setminus B_R^+} a(t)\rho^2 \tau^2 |\nabla \theta|^2 \,dx \,dt\right)^{1/2} 
				\left(\frac{1}{R^2}\int_{B_{2R}^+\setminus B_R^+} a(t)\rho^2\left(\frac{C_1 \partial_{x_2}U}\rho\right)^2\,dx \,dt\right)^{1/2}\\
				&\qquad= CC_1\left(\int_{B_{2R}^+\setminus B_R^+} a(t)\rho^2 \tau^2 |\nabla \theta|^2 \,dx \,dt\right)^{1/2} 
				\left(\frac{1}{R^2}\int_{B_{2R}^+\setminus B_R^+} a(t)\left( \partial_{x_2}U\right)^2\,dx \,dt\right)^{1/2}  .
			\end{split}
		\end{equation*}
		Thus, exploiting the assumption in~\eqref{sufficien condition}, we find that 
		\begin{equation*}\left|
			\int_{{\mathbb{R}}_{+}^3} a(t)\rho^2 \tau \theta\nabla\theta\cdot \nabla \tau \,dx \,dt\right|
			\leq C_2\left(\int_{B_{2R}^+\setminus B_R^+} a(t)\rho^2\tau^2 |\n \theta|^2 \,dx \,dt\right)^{1/2},
		\end{equation*}
		for some~$C_2>0$ independent of~$R$.
		
		{F}rom this and~\eqref{ibp}, we conclude that
		\begin{equation}\label{nbvceuity47ie4356728}\begin{split}&
				\int_{B_{2R}^+} a(t)\rho^2 \tau^2|\nabla\theta|^2 \,dx \,dt=
				\int_{{\mathbb{R}}_{+}^3} a(t)\rho^2 \tau^2|\nabla\theta|^2 \,dx \,dt\\&\qquad=-2
				\int_{{\mathbb{R}}_{+}^3} a(t)\rho^2 \tau \theta\nabla\theta\cdot \nabla \tau \,dx \,dt
				\le 2C_2\left(\int_{B_{2R}^+\setminus B_R^+} a(t)\rho^2\tau^2 |\n \theta|^2 \,dx \,dt\right)^{1/2},
		\end{split}\end{equation}
		and therefore
		\begin{equation*}
			\left(\int_{B_{2R}^+} a(t)\rho^2\tau^2 |\n\theta|^2 \,dx \,dt\right)^{1/2}\leq2 C_2.
		\end{equation*}
		As a result,
		\begin{equation*}
			\int_{B_{R}^+} a(t)\rho^2 |\n\theta|^2 \,dx \,dt\leq 4 C_2^2.
		\end{equation*}
		Thus, sending~$R$ to infinity, we get that
		\begin{equation*}
			\int_{{\mathbb{R}}_{+}^3} a(t)\rho^2 |\n\theta|^2 \,dx \,dt\leq 4C_2^2.
		\end{equation*}
		
		As a consequence, 
		we have that~$a(t)\rho^2 |\n \theta|^2\in L^1({\mathbb{R}}_{+}^3)$.
		We now exploit this information together with~\eqref{nbvceuity47ie4356728}
		to see that
		\begin{equation*}
			\begin{split}
				\int_{{\mathbb{R}}_{+}^3} a(t)\rho^2 |\n\theta|^2 \,dx \,dt&
				= \lim_{R\to +\infty} \int_{B_{R}^+} a(t)\rho^2 |\n\theta|^2 \,dx \,dt\\
				&\leq 2C_2 \lim_{R\to +\infty} \left(\int_{B_{2R}^+\setminus B_R^+} a(t)\rho^2\tau^2 |\n \theta|^2 \,dx \,dt\right)^{1/2}\\
				&\leq 2C_2 \lim_{R\to +\infty} \left(\int_{B_{2R}^+\setminus B_R^+} a(t)\rho^2 |\n \theta|^2 \,dx \,dt\right)^{1/2}\\
				&= 0.
			\end{split}
		\end{equation*}
		Since~$\rho\neq0$ (thanks to the last equation in~\eqref{Main Problem}), this
		gives that~$\n \theta = 0$ in~${\mathbb{R}}_+^3$, and therefore~$\theta$
		is constant in~${\mathbb{R}}_+^3$.
		The desired result follows from this and the continuity of~$\theta$.
	\end{proof}
	
	\begin{proof}[Proof of Theorem~\ref{Main Theorem}]
		We exploit the notation in~\eqref{Z, theta, rho} and we use 
		Lemma~\ref{LIOUV} to obtain that~$\theta \equiv \theta_0$ 
		in~$\overline{\mathbb{R}_{+}^3}$,
		for some~$\theta_0\in {\mathbb{R}}$. 
		As a consequence,
		we can write~$(\partial_{x_2}U)^2= \alpha \rho^2$, for some~$\a\in (0,1]$.
		{F}rom this, we also have that~$\partial_{x_1}U = \pm \sqrt{\frac{1-\a}\alpha} \partial_{x_2}U$. 
		This yields that there exists~$\nu\in \mathbb{R}^2\setminus\{0\}$,
		possibly depending on~$t$, such that~$\nabla U = (\nu \partial_{x_2}U,\partial_t U)$.
		
		Let now~$\nu_{\perp}\in {\mathbb{R}}^2$ be the unit vector orthogonal to~$\nu$. Then, for all~$ \eta\in \mathbb{R}$ and~$ \nu_0\in\mathbb{R}^2$,
		\begin{eqnarray*}&&
			\frac{d}{d\eta}U(\eta\nu_{\perp}+\nu_0, t) =\big(\partial_{x_1}U(\eta\nu_{\perp}+\nu_0, t),
			\partial_{x_2}U(\eta\nu_{\perp}+\nu_0, t)\big)\cdot\nu_\perp \\&&\qquad\qquad=
			\partial_{x_2}U(\eta\nu_{\perp}+\nu_0, t)\,\nu\cdot\nu_\perp
			= 0,
		\end{eqnarray*}
		that is, $U(\cdot, t)$ is constant along straight lines parallel to~$\nu_{\perp}$. More precisely, for all~$ \eta\in \mathbb{R}$ and~$ \nu_0\in\mathbb{R}^2$,
		\begin{equation}\label{dweiotgurgbvhsdvt75i43wmnbvc}
			U(\eta\nu_{\perp}+\nu_0, t)=U(\nu_0, t).
		\end{equation}
		
		Now, for all~$t\geq 0$, we define $$u_t(\eta):= U\left(\frac{\eta\nu}{|\nu|},t\right).$$
		In light of~\eqref{dweiotgurgbvhsdvt75i43wmnbvc}, we have that
		\begin{eqnarray*}&&U(x,t) = 
			U\left( \left(\frac{\nu}{|\nu|}\cdot x\right)\frac{\nu}{|\nu|}+ \left(\frac{\nu_\perp}{|\nu_\perp|}\cdot x\right) \frac{\nu_\perp}{|\nu_\perp|},t
			\right)=U\left( \left(\frac{\nu}{|\nu|}\cdot x\right)\frac{\nu}{|\nu|},t
			\right)=
			u_t\left(\frac{\nu}{|\nu|}\cdot x\right),\end{eqnarray*}
		which establishes the desired result.
	\end{proof}
	
	\section{\textbf{Fractional Version of the De Giorgi Conjecture in} $\mathbb{R}^2$ \textbf{and proof of Theorem~\ref{Theorem 1}}}\label{PF:TH1}
	
	Here we will exploit the $s$-harmonic extension of the fractional Laplacian to the upper-half space~$\mathbb{R}_{+}^3$ to prove Theorem~\ref{Theorem 1}. 
	To fulfill this goal, 
	as done in~\cite{MR2354493, MR2498561}, 
	we consider the
	$s$-Poisson kernel associated with the fractional Laplacian, defined,
	for all~$(x,t)\in\R^3_+$, as
	$$P_s(x,t):= \frac{c_{n,s}\,t^{2s}}{\left(t^2+|x|^2\right)^{\frac{n+2s}{2}}}.$$
	Here~$c_{n,s}$ is a normalizing constant such that, for all~$ t>0$,
	\begin{equation*}
		\int_{{\mathbb{R}}^2}P_{s}(x,t)\,dx=1.
	\end{equation*}
	
	Furthermore, for all~$(x,t)\in\R^3_+$, we define
	\begin{equation}\label{Sol of extension}
		U(x,t) := \int_{{\mathbb{R}}^2}P_s(y,t)u(x-y)\,dy.
	\end{equation}
	We have that~$U(x,0)=u(x)$, see e.g. Lemma~12 in~\cite{MR2498561}.
	
	Also, we know that if~$u$ solves~$(-\Delta)^su=f(u)$ in~$\R^2$, then~$U$ satisfies the following problem
	\begin{equation}\label{extension}
		\begin{cases}
			\div (t^{1-2s}\n U )= 0 &\; \text{ in } \; {\mathbb{R}}_{+}^3,\\
			-\displaystyle\lim_{t\to 0^+} t^{1-2s}\partial_t  U = f(u)&\; \text{ in } \;{\mathbb{R}}^2\times \{0\}.
		\end{cases}
	\end{equation}
	In this setting, we have the following result:
	
	\begin{theorem}\label{dhuityintermediate}
		Let~$f\in C^1(\R)$ and~$u$ be a solution of~\eqref{Problem 1}
		satisfying~\eqref{decay assumption on u}
		and~\eqref{decay assumption on u_x_2}. Let~$U$ be as in~\eqref{Sol of extension}.
		
		Then, there exist~$\omega\in\mathbb{S}^1$ and,
		for all~$t\geq 0$, a function~$u_t: \mathbb{R}\rightarrow \mathbb{R}$
		such that~$U(x,t) = u_t(x\cdot \omega)$ for every~$x \in \mathbb{R}^2$.
	\end{theorem}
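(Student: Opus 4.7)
The plan is to reduce Theorem~\ref{dhuityintermediate} to Theorem~\ref{Main Theorem} applied with $a(t):=t^{1-2s}$ and $F\equiv 0$, after verifying, for the extension $U$ defined by~\eqref{Sol of extension}, each of the hypotheses of the latter. First I would use the decay assumption~\eqref{decay assumption on u} together with the explicit form of the $s$-Poisson kernel $P_{s}$ and the standard theory of the Caffarelli--Silvestre extension (see Lemma~12 of~\cite{MR2498561} and also~\cite{MR2354493}) to establish that $U\in C^{2}_{\mathrm{loc}}(\overline{\mathbb{R}_{+}^{3}})$, that $U(\cdot,0)=u$, and that $U$ solves~\eqref{extension}; this coincides with the first two equations of~\eqref{Main Problem} with the above choices and (up to a normalization that can be absorbed into $f$) the same boundary nonlinearity.

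Next I would verify the strict positivity $\partial_{x_{2}}U>0$ in $\mathbb{R}_{+}^{3}$. Differentiating~\eqref{Sol of extension} under the integral sign, which is legitimate thanks to~\eqref{decay assumption on u_x_2}, gives
\[
\partial_{x_{2}}U(x,t)\;=\;\int_{\mathbb{R}^{2}}P_{s}(y,t)\,\partial_{x_{2}}u(x-y)\,dy\;\geq 0,
\]
since $P_{s}>0$ and $\partial_{x_{2}}u\geq 0$. If this quantity vanishes at some interior point, the positivity of $P_{s}$ forces $\partial_{x_{2}}u\equiv 0$ in $\mathbb{R}^{2}$, so $u=u(x_{1})$; hence $U(x,t)$ depends only on $(x_{1},t)$ and the conclusion holds trivially with $\omega=(1,0)$. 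Otherwise $\partial_{x_{2}}U>0$ throughout $\mathbb{R}_{+}^{3}$.

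The main obstacle is to verify the energy bound~\eqref{sufficien condition}. Rewriting, via the change of variable $z=y/t$ in~\eqref{Sol of extension},
\[
\partial_{x_{2}}U(x,t)=c_{2,s}\int_{\mathbb{R}^{2}}\frac{\partial_{x_{2}}u(x-tz)}{(1+|z|^{2})^{1+s}}\,dz,
\]
I would apply Jensen's inequality (exploiting that $P_{s}(\cdot,t)$ is a probability density on $\mathbb{R}^{2}$) to estimate $(\partial_{x_{2}}U)^{2}$ from above by a convolution-type integral of $(\partial_{x_{2}}u)^{2}$, then use Fubini to swap the order of integration between the shell $B_{2R}^{+}\setminus B_{R}^{+}$ and the convolution variable. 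Together with the identity $t^{1-2s}P_{s}(y,t)=c_{2,s}\,t\,(t^{2}+|y|^{2})^{-1-s}$, this should reduce matters to bounding the resulting inner $(x,t)$-integral over the shell, whose growth in $R$ must balance the decay encoded in~\eqref{decay assumption on u_x_2}. The delicate point is carrying out this balance uniformly in $R$, essentially splitting the dual spatial variable into near-field and far-field regimes relative to the scale $R$.

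Finally, once~\eqref{sufficien condition} is in place, Theorem~\ref{Main Theorem} yields, for each $t\geq 0$, some $\omega_{t}\in\mathbb{S}^{1}$ and $u_{t}\colon\mathbb{R}\to\mathbb{R}$ with $U(x,t)=u_{t}(\omega_{t}\cdot x)$. Inspecting the proof of Theorem~\ref{Main Theorem} shows that this direction is determined by the globally constant angle $\theta_{0}$ produced by Lemma~\ref{LIOUV}, and is therefore independent of $t$. Setting $\omega:=\omega_{0}$ concludes the proof.
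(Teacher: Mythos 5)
Your overall strategy is the correct one — reduce to Theorem~\ref{Main Theorem} with $a(t)=t^{1-2s}$ and $F\equiv 0$, after checking regularity, the extension equations, the sign of $\partial_{x_2}U$, and the energy bound~\eqref{sufficien condition}. The handling of the degenerate case $\partial_{x_2}u\equiv 0$ and the passage to strict positivity of $\partial_{x_2}U$ match the paper.

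However, the key step — verifying~\eqref{sufficien condition} — is where your proposal has a genuine gap. Jensen's inequality gives
\[
(\partial_{x_2}U(x,t))^2 \;\le\; \int_{\mathbb{R}^2}P_s(y,t)\,(\partial_{x_2}u(x-y))^2\,dy,
\]
and to make a Fubini/shell argument close you would need some control of the \emph{square} $(\partial_{x_2}u)^2$, e.g.\ that it is integrable or has at most quadratic growth. Nothing of the sort is in the hypotheses: \eqref{decay assumption on u} and~\eqref{decay assumption on u_x_2} only say that $u$ and $\partial_{x_2}u$ are integrable against the Poisson kernel at each $(x,t)$, and you honestly flag that the "delicate balance" is not carried out. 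The paper avoids this entirely by proving a pointwise bound (Lemma~\ref{est U_y}): $|\partial_{x_2}U(x,t)|\le C_0\min\{1,t^{-1}\}$. The $O(1)$ bound for small $t$ follows directly from~\eqref{decay assumption on u_x_2} after the change of variable $y=tw$; the $O(t^{-1})$ bound for large $t$ comes from \emph{integrating by parts}, moving the $x_2$-derivative onto the kernel $P_s$ to gain a factor $t^{-1}$, and then invoking~\eqref{decay assumption on u}. With $\min\{1,t^{-1}\}$ in hand, $\int_0^\infty t^{1-2s}\min\{1,t^{-2}\}\,dt<\infty$ for all $s\in(0,1)$, and the spatial factor $\sim R^2$ cancels the prefactor $R^{-2}$, so~\eqref{sufficien condition} is immediate. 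This two-regime pointwise estimate is the idea your Jensen approach misses, and without it I do not see how to complete your step.

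For the $t$-independence of $\omega$, your observation is correct and actually yields a slightly stronger conclusion than Theorem~\ref{Main Theorem} is stated to give: since Lemma~\ref{LIOUV} makes $\theta$ globally constant (say $\theta\equiv\theta_0$) and $\rho\neq 0$, one has $(\partial_{x_1}U,\partial_{x_2}U)=\rho(\cos\theta_0,\sin\theta_0)$ everywhere, so the direction $\nu$ can indeed be chosen independently of $t$. The paper instead proves $t$-independence by an independent, more self-contained argument: it uses the representation~\eqref{Sol of extension} to show directly that if $u=u_0(\,\cdot\,\cdot\,\omega_0)$, then $U(\cdot,t)$ is constant along lines orthogonal to $\omega_0$ for every $t>0$. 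Both routes work; yours avoids repeating the convolution computation, while the paper's respects the stated (weaker) form of Theorem~\ref{Main Theorem} as a black box.
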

	
	The proof of Theorem~\ref{dhuityintermediate}
	will rely on Theorem~\ref{Main Theorem}. 
	For this, we will now check that the assumptions
	of Theorem~\ref{Main Theorem} are satisfied.
	
	We first point out that 
	Theorem~\ref{dhuityintermediate} (and also Theorem~\ref{Theorem 1}) is trivial if~$\partial_{x_2}u\equiv 0$, since, by~\eqref{Sol of extension},
	$$ \partial_{x_2} U(x,t) = \int_{{\mathbb{R}}^2}P_s(y,t)\partial_{x_2}u(x-y)\,dy=0,$$
	for all~$(x,t)\in\R^3_+$.
	Therefore, from now on, we assume that
	\begin{equation}\label{nvcxba123456709876}
		{\mbox{$\partial_{x_2}u(x_0)>0$ at some point~$x_0\in {\mathbb{R}}^2$.}}\end{equation}
	
	Also, we observe that~$\partial_{x_2}U\ge0$, thanks to~\eqref{Sol of extension}
	and the second equation in~\eqref{Problem 1}.
	Moreover, in light of~\eqref{nvcxba123456709876} and the continuity of~$u$,
	we actually have that
	\begin{equation}\label{f543q45367289dhvhdbcdhsj}
		{\mbox{$\partial_{x_2}U>0$ in~$\R^3_+$.}}\end{equation}
	
	Now, we show that~$U$, as defined in~\eqref{Sol of extension}, satisfies~\eqref{sufficien condition}. 
	To this end, we provide a preliminary pointwise estimate on~$\partial_{x_2}U$.
	
	\begin{lemma}\label{est U_y}
		Under the assumptions of Theorem~\ref{dhuityintermediate}, there exists a positive constant~$C_0$, depending on~$n$, $s$ and~$u$, such that, for all~$ (x,t)\in {\mathbb{R}}_{+}^3$,
		$$|\partial_{x_2}U(x,t)|\leq C_0 \min\{1, t^{-1}\}.$$
	\end{lemma}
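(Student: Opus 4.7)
The plan is to prove two separate bounds, $|\partial_{x_2} U(x,t)| \leq C$ and $|\partial_{x_2} U(x,t)| \leq C/t$, and then combine them into the $\min\{1, t^{-1}\}$ estimate. Throughout I would work under the standing assumption that $u$ is bounded, consistent with the De Giorgi framework and the remark after Theorem~\ref{Theorem 1} (which notes that boundedness implies both~\eqref{decay assumption on u} and~\eqref{decay assumption on u_x_2}).

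For the uniform bound, I would differentiate under the integral sign in~\eqref{Sol of extension} to obtain
\begin{equation*}
\partial_{x_2} U(x,t) = \int_{\mathbb{R}^2} P_s(y,t)\, \partial_{x_2} u(x-y) \, dy.
\end{equation*}
Applying the fractional elliptic regularity theory to~\eqref{Problem 1}, as recalled in the remark following Theorem~\ref{Theorem 1}, one obtains $\partial_{x_2} u \in L^\infty(\mathbb{R}^2)$. Since $\int_{\mathbb{R}^2} P_s(y,t)\,dy = 1$, this yields $|\partial_{x_2} U(x,t)| \leq \|\partial_{x_2} u\|_\infty$, uniformly in $(x,t)$.

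For the decay bound in $t$, I would integrate by parts in $y$ to transfer the derivative from $u$ onto the Poisson kernel. Using $\partial_{x_2}[u(x-y)] = -\partial_{y_2}[u(x-y)]$ together with the rapid decay of $P_s$ in $y$, we get
\begin{equation*}
\partial_{x_2} U(x,t) = \int_{\mathbb{R}^2} \partial_{y_2} P_s(y,t)\, u(x-y) \, dy.
\end{equation*}
A direct computation gives $\partial_{y_2} P_s(y,t) = -c_{n,s}(2+2s)\, t^{2s} y_2\, (t^2+|y|^2)^{-(2+s)}$, whence
\begin{equation*}
|\partial_{x_2} U(x,t)| \leq C \|u\|_\infty\, t^{2s} \int_{\mathbb{R}^2} \frac{|y|}{(t^2+|y|^2)^{2+s}}\,dy.
\end{equation*}
The change of variables $y=tz$ turns the right-hand side into $C' \|u\|_\infty / t$, provided the dimensionless integral $\int_{\mathbb{R}^2} |z|(1+|z|^2)^{-(2+s)} dz$ is finite; this is indeed the case, as the integrand decays like $|z|^{-3-2s}$ at infinity.

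Combining the two estimates yields $|\partial_{x_2} U(x,t)| \leq C_0 \min\{1, t^{-1}\}$, with $C_0$ depending on $n$, $s$ and $u$ (through $\|u\|_\infty$ and $\|\partial_{x_2} u\|_\infty$). The only technical point is justifying the differentiation under the integral sign and the integration by parts, but this is routine given the smoothness and fast decay of $P_s$ and the boundedness of $u$ and $\partial_{x_2} u$; in particular, no boundary contributions arise because $P_s(y,t)|u(x-y)| \to 0$ as $|y|\to\infty$ uniformly for $t$ in any compact subset of $(0,+\infty)$.
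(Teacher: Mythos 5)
Your proof is essentially the paper's proof: the decomposition into a uniform bound and a $t^{-1}$ bound, and the key trick of integrating by parts to transfer $\partial_{x_2}$ from $u$ onto the Poisson kernel (giving $|\partial_{y_2}P_s| \lesssim t^{2s}|y|(t^2+|y|^2)^{-(2+s)}$, which rescales to $t^{-1}$ after the change of variables $y=tz$), are both exactly what the paper does. The one genuine point of divergence is in the uniform bound. You assume $u\in L^\infty$ and invoke fractional elliptic regularity to conclude $\partial_{x_2}u\in L^\infty$, then use $\int P_s(\cdot,t)\,dy=1$. The paper instead does not assume boundedness: it writes $\partial_{x_2}U(x,t)=c_{n,s}\int_{\mathbb{R}^2}(1+|w|^2)^{-(1+s)}\,\partial_{x_2}u(x-tw)\,dw$ (change of variables $w=t^{-1}y$) and reads the bound off directly from hypothesis~\eqref{decay assumption on u_x_2}, avoiding regularity theory entirely; similarly the $t^{-1}$ bound is closed using~\eqref{decay assumption on u} rather than $\|u\|_\infty$. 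Since the lemma is stated under the assumptions of Theorem~\ref{dhuityintermediate}, which are precisely~\eqref{decay assumption on u} and~\eqref{decay assumption on u_x_2} and not $u\in L^\infty$, your proof technically establishes the conclusion under a (modestly) stronger hypothesis than the lemma asserts; the paper's route is the more economical one that uses the hypotheses exactly as given. (You should note, though, that both arguments implicitly read~\eqref{decay assumption on u}--\eqref{decay assumption on u_x_2} as uniform-in-$(x,t)$ bounds rather than mere finiteness, as the paper does when it invokes ``the assumption in~\eqref{decay assumption on u}'' to produce a constant $C$ independent of $x$ and $t$.)
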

	
	\begin{proof}
		Using the assumption in~\eqref{decay assumption on u_x_2} and the change of variables~$w:= t^{-1}y$, we see that
		\begin{equation*}
			\partial_{x_2}U(x,t)= c_{n,s}\int_{{\mathbb{R}}^2} \frac{t^{2s}\,\partial_{x_2}u(x-y)}{(t^2+|y|^2)^{1+s}}\,dy=c_{n,s} \int_{{\mathbb{R}}^2}\frac{\partial_{x_2}u(x-tw)}{(1+|w|^2)^{1+s}}\,dw <+\infty.
		\end{equation*}
		Moreover, integrating by parts and changing variables~$w := t^{-1}y$, we have that
		\begin{equation*}
			\begin{split}
				\partial_{x_2}U (x,t) &= \int_{{\mathbb{R}}^2}P_{s}(y,t) \partial_{x_2}u(x-y)\,dy\\& = \int_{{\mathbb{R}}^2}\p_{y_2}P_s(y,t)u(x-y)\,dy\\
				&= 2(1+s)c_{n,s}\int_{{\mathbb{R}}^2}\frac{t^{2s}y_2}{(t^2+|y|^2)^{2+s}}u(x-y)\,dy\\
				&= 2(1+s)c_{n,s}t^{2s+3-4-2s} \int_{{\mathbb{R}}^2} \frac{w_2}{(1+|w|^2)^{2+s}}u(x-tw)\,dw\\
				&\leq2(1+s)c_{n,s}t^{-1}\int_{{\mathbb{R}}^2} \frac{|u(x-tw)|}{(1+|w|^2)^{1+s}}\,dw\\
				&\leq 2(1+s)c_{n,s}Ct^{-1},
			\end{split}
		\end{equation*}
		where in the last inequality we have used the assumption in~\eqref{decay assumption on u}.
		
		The desired result now follows from the last two formulas in display and~\eqref{f543q45367289dhvhdbcdhsj}. 
	\end{proof}
	
	\begin{lemma}\label{Sufficient condition is satisfies}
		Under the assumptions of Theorem~\ref{dhuityintermediate}, we have that~$U$, as defined in~\eqref{Sol of extension}, satisfies~\eqref{sufficien condition}
		with~$a(t):=t^{1-2s}$.
	\end{lemma}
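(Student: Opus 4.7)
The plan is to deduce the bound directly from the pointwise estimate in Lemma~\ref{est U_y}, which gives a constant~$C_0>0$ with $|\partial_{x_2}U(x,t)|^2\leq C_0^2\min\{1,t^{-2}\}$ for every~$(x,t)\in\mathbb{R}^3_+$. Thus, it suffices to show that
$$
\frac{1}{R^2}\int_{B_{2R}^+\setminus B_R^+} t^{1-2s}\min\{1,t^{-2}\}\,dx\,dt
$$
is uniformly bounded in~$R>1$.

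The natural step is to split the annular region $B_{2R}^+\setminus B_R^+\subset\{|x|<2R,\ 0<t<2R\}$ along the threshold~$t=1$. For the portion where $0<t\leq 1$, I would use $(\partial_{x_2}U)^2\leq C_0^2$ and estimate
$$
\int_0^1\int_{|x|<2R} t^{1-2s}\,dx\,dt \leq 4\pi R^2\int_0^1 t^{1-2s}\,dt,
$$
where the $t$-integral is finite because $1-2s>-1$ whenever~$s<1$. For the portion where $t>1$, I would use $(\partial_{x_2}U)^2\leq C_0^2\,t^{-2}$ and bound
$$
\int_1^{2R}\int_{|x|<2R} t^{-1-2s}\,dx\,dt \leq 4\pi R^2\int_1^{+\infty} t^{-1-2s}\,dt,
$$
where the $t$-integral is finite because $-1-2s<-1$ whenever~$s>0$.

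Adding the two contributions and dividing by~$R^2$ yields a bound independent of~$R$, which is exactly~\eqref{sufficien condition} with~$a(t)=t^{1-2s}$. There is no real obstacle here: the argument is a two-region splitting that uses the full range $s\in(0,1)$ precisely to guarantee integrability of~$t^{1-2s}$ near~$t=0$ and of~$t^{-1-2s}$ at~$t=+\infty$, while the $O(R^2)$ horizontal volume of the annulus is absorbed by the prefactor~$1/R^2$.
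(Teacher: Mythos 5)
Your proof is correct and takes essentially the same route as the paper: both rely on the pointwise bound $|\partial_{x_2}U|\leq C_0\min\{1,t^{-1}\}$ from Lemma~\ref{est U_y}, enlarge the annulus to a cylinder $B_{2R}\times(0,2R)$, and observe that $\int_0^{+\infty}t^{1-2s}\min\{1,t^{-2}\}\,dt<+\infty$ for $s\in(0,1)$ while the horizontal volume $\sim R^2$ is cancelled by the prefactor $1/R^2$. Your explicit split of the $t$-integral at $t=1$ is just spelling out the finiteness that the paper asserts in one line.
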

	
	\begin{proof}
		Let~$R>1$ and~$C_{R} := B_{R}\times (0,R) \subset \mathbb{R}_{+}^3$ denote the cylinder of radius~$R$ and height~$R$ in the upper half-space~$\mathbb{R}_{+}^3$. {F}rom Lemma~\ref{est U_y} we have that 
		\begin{equation*}
			\begin{split}
				\frac1{R^2} \int_{B_{2R}^+\setminus B_{2R}^+} t^{1-2s}(\partial_{x_2}U)^2\,dx \,dt& \leq \frac{C}{R^2} \int_{B_{2R}^+\setminus B_{2R}^+} t^{1-2s}\min\{1,t^{-2}\} \,dx \,dt\\
				&\leq \frac{C}{R^2} \int_{C_{2R}} t^{1-2s}\min\{1,t^{-2}\} \,dx \,dt\\
				&\leq C\int_{0}^{+\infty} t^{1-2s}\min\{1, t^{-2}\}\,dt \\&\leq C,
			\end{split}
		\end{equation*}
		up to renaming~$C>0$,
		that is independent of~$R$.
	\end{proof}
	
	We can now complete the proof of Theorem~\ref{dhuityintermediate}.
	
	\begin{proof}[Proof of Theorem~\ref{dhuityintermediate}]
		We have that~$U$ satisfies~\eqref{extension} and~\eqref{f543q45367289dhvhdbcdhsj},
		and therefore the assumptions in~\eqref{Main Problem} are fulfilled taking~$a(t):=t^{1-2s}$.
		
		Also, $U$ satisfies~\eqref{sufficien condition} with~$a(t)=t^{1-2s}$, thanks to
		Lemma~\ref{Sufficient condition is satisfies}.
		
		Therefore, we are in the position of exploiting Theorem~\ref{Main Theorem}
		in this setting. In this way, we obtain that, for all~$t\ge0$, there exist~$u_t: \mathbb{R}\rightarrow \mathbb{R}$ and~$\omega_t \in \mathbb{S}^1$ such that
		\begin{equation}\label{zxcvbnmoiuytUYLIHLIOIOI59043}
			U(x,t) = u_t(x\cdot \omega_t) \end{equation} for every~$x \in \mathbb{R}^2$.
		
		Now, to complete the proof of Theorem~\ref{dhuityintermediate},
		we show that
		\begin{equation}\label{65743982gfufiyefjhkuewthzxcvbn}
			{\mbox{there exists~$\omega\in \mathbb{S}^1$ such that~$\omega_t=\omega$
					for all~$t\ge0$.}}
		\end{equation}
		To check this, we use~\eqref{zxcvbnmoiuytUYLIHLIOIOI59043} with~$t=0$ and we have that~$u(x)=U(x,0)=u_0(x\cdot\omega_0)$.
		We let~$(\omega_0)_\perp\in\R^2$ be the unit vector orthogonal to~$\omega_0$.
		Therefore, recalling~\eqref{Sol of extension}, we have that, for all~$t>0$ and~$\eta\in\R$,
		\begin{eqnarray*} U(x+\eta(\omega_0)_\perp,t) &=& \int_{{\mathbb{R}}^2}P_s(y,t)u(x+\eta(\omega_0)_\perp-y)\,dy\\&=&
			\int_{{\mathbb{R}}^2}P_s(y,t)u_0\big((x+\eta(\omega_0)_\perp-y)\cdot\omega_0\big)\,dy\\&=&
			\int_{{\mathbb{R}}^2}P_s(y,t)u_0\big((x-y)\cdot\omega_0\big)\,dy\\&=&
			\int_{{\mathbb{R}}^2}P_s(y,t)u(x-y)\,dy\\&=&U(x,t)
			.
		\end{eqnarray*}
		That is, for all~$t>0$, we have that~$U(\cdot, t)$ is constant along straight lines parallel to~$(\omega_0)_\perp$.
		
		Accordingly, \eqref{65743982gfufiyefjhkuewthzxcvbn} is satisfied with~$\omega:=\omega_0$, and this completes the proof of Theorem~\ref{dhuityintermediate}.
	\end{proof}
	
	Theorem~\ref{Theorem 1} now plainly follows from Theorem~\ref{dhuityintermediate}, since~$U(x,0)=u(x)$.
	
	\section{Representation Formula for a Class of Weighted Equations in~$\mathbb{R}^{n+1}_+$}\label{section4}
	
	In this section, we present a representation formula for finite-energy solutions of a class of weighted elliptic partial differential equations in the upper-half space~${\mathbb{R}}^{n+1}_+:={\mathbb{R}}^n\times(0,+\infty)$, with~$n\geq 1$, with given boundary data in the form of Neumann boundary conditions.
	This work is inspired by the already known representation formula for the $s$-harmonic extension of the fractional Laplacian onto the upper-half space~${\mathbb{R}}^{n+1}_+$ and will lead to the forthcoming Theorem~\ref{Convolution enters the chat}.

	More precisely, we will deal with finite-energy solutions of the following problem
	\begin{equation}\label{Prob with Neumann}
		\begin{cases}
			\div(a(t)\nabla U)=0&\; \text{ in }\; {\mathbb{R}}^{n+1}_+,\\
			-\displaystyle\lim_{t\to 0+} a(t)\partial_t U = f &\; \text{ in }\; {\mathbb{R}}^{n}.
		\end{cases}
	\end{equation}
	
	Here above and in what follows, we assume that~$a: {\mathbb{R}}_+ \rightarrow {\mathbb{R}}_+$ is a continuously differentiable $A_2$-Muckenhoupt\footnote{We recall that a weight~$\omega:{\mathbb{R}}^n\to[0,+\infty)$ belongs to the~$A_2$-Muckenhoupt class if~$\omega$ is locally integrable and there exists~$C>0$ such that, for all balls~$B$,
		$$ \frac1{|B|^2} \left( \int_B \omega(x)\,dx\right)
		\left( \int_B \frac1{\omega(x)}\,dx\right)\le C,$$
		where~$|B|$ denotes the Lebesgue measure of~$B$.
		See e.g.~\cite{MR643158}, where a regularity theory for solutions
		of equations involving Muckenhoupt weights is developed.} weight.
		
	We also recall that we are using the notation~$(x,t)\in\R^n\times(0,+\infty)$ to denote points in~$\R^{n+1}_+$.
	
	\subsection{Functional spaces, notation and results}
	We will now introduce the function spaces used throughout this section.
	
	As customary, $\mathcal{S}({\mathbb{R}}^n)$ will denote the Schwartz space in~${\mathbb{R}}^n$ and~$\cS({\mathbb{R}^n})'$ will denote the space of tempered distributions in~$\mathbb{R}^n$. Also, given~$\psi \in \mathcal{S}({\mathbb{R}}^n)$
	and~$T\in \mathcal{S}'({\mathbb{R}}^n)$, the notation~$
	\langle T, \psi \rangle_{\mathcal{S}({\mathbb{R}}^n)}$
	means that the distribution~$T$ is applied to~$\psi$.
	
	Moreover, we define the space of functions with ``finite energy'' as
	\begin{equation}\label{space X}
		X({\mathbb{R}}^{n+1}_+,a):=\left\{U\in W^{1,1}_{\text{loc}}({\mathbb{R}}^{n+1}_+)\;{\mbox{ s.t. }}\;\int_{{\mathbb{R}}^{n+1}_+}a(t)|\nabla U(x,t)|^2 \,dx\,dt<+\infty\right\}.
	\end{equation}
	
	Given~$\Omega\subseteq{\mathbb{R}}^{n+1}_+$, we define the norm
	$$ \|U\|_{H^1(\Omega,a)}:= \left(\int_{\Omega}a(t)\big(
	|\nabla U(x,t)|^2 + |U(x,t)|^2\big) \,dx\,dt\right)^{1/2} 
	$$
	and the weighted Sobolev space
	\begin{equation*}
		H^1(\Omega,a) :=\Big\{U\in W^{1,1}_{\text{loc}}(\Omega)\;{\mbox{ s.t. }}\; \|U\|_{H^1(\Omega,a)}<+\infty\Big\}.
	\end{equation*}
	Weighted Poincar\'e inequalities (see Section~1 of~\cite{MR643158} for weighted Sobolev
	and Poincar\'e inequalities and Imbedding Theorems) imply that~$X({\mathbb{R}}^{n+1}_+,a) \subseteq H^1_{\text{loc}}({\mathbb{R}}^{n+1}_+, a)$. 
	
	For any~$\lambda\ge0$, we define
	\begin{equation}\label{inf of G }
		m({\lambda}):= \inf_{\underset{\phi(0)=1}
			{\phi \in W_{\text{loc}}^{1,1}(\mathbb{R}_+})}\int_{{\mathbb{R}}_+}a(t)\left(\lambda |\phi(t)|^2+|\phi'(t)|^2\right)\,dt.
	\end{equation}
	This minimization problem will be described in detail in the forthcoming
	Theorem~\ref{Minimization Problem} for~$\lambda >0$.
	
	We also define the space of functions
	\begin{equation*}
		H_a({\mathbb{R}}^n):= \left\{ u \in \mathcal{S}'(\mathbb{R}^n) \;{\mbox{ s.t. }}\;{\mbox{$\hat{u}$ is a measurable function and}}\; [ u]_{H_a(\mathbb{R}^n)}<+\infty \right\},
	\end{equation*}
	where
	$$
	[u]_{H_a(\mathbb{R}^n)}:=\left(\int_{{\mathbb{R}}^n} m(|\xi|^2) |\hat{u}(\xi)|^2 \,d\xi\right)^{1/2}.
	$$
	We point out that the integral above is well-defined, since~$m$ is a measurable function (see Remark~\ref{m is a measurable function}).
	
	Here above and in the rest of the paper, we use the notation~$\mathcal{F}u=\hat u$ to denote the (distributional) Fourier transform
	and~$\mathcal{F}^{-1}u$ to denote the inverse Fourier transform of~$u \in \mathcal{S}'(\mathbb{R}^n)$.
	Given~$V \in \mathcal{S}'(\mathbb{R}^{n+1})$, we denote by~$\mathcal{F}_x V$ the distributional Fourier transform in the~$x$-variable
	by~$\mathcal{F}_{x}^{-1} V$ its inverse, which are given by 
	\begin{eqnarray*} &&
		\langle \mathcal{F}_x V,\psi\rangle_{\mathcal{S}(\mathbb{R}^{n+1})} = \langle V, \mathcal{F}_x \psi\rangle_{\mathcal{S}(\mathbb{R}^{n+1})}
		\\{\mbox{and }}&&
		\langle \mathcal{F}_{x}^{-1} V,\psi\rangle_{\mathcal{S}(\mathbb{R}^{n+1})} = \langle V,  \mathcal{F}_{x}^{-1}  \psi\rangle_{\mathcal{S}(\mathbb{R}^{n+1})},
	\end{eqnarray*}
	for every~$\psi \in \mathcal{S}\left(\mathbb{R}^{n+1}\right)$. Note that Fourier transforming (and inverse Fourier transforming) a tempered distribution in $\mathbb{R}^{N+1}$ in the $x-$variable is a well-defined operation in $\mathcal{S}'(\mathbb{R}^{n+1})$ because the Fourier transform in the $x-$ variable of a function in $\mathcal{S}(\mathbb{R}^{n+1})$ belongs to $\mathcal{S}(\mathbb{R}^{n+1})$, i.e., $\mathcal{F}_x(\mathcal{S}(\mathbb{R}^{n+1}))\subset \mathcal{S}(\mathbb{R}^{n+1})$.
	
	Also, we define the map~$\mathcal{L}_a:H_a(\mathbb{R}^n)\to \mathcal{S}'(\mathbb{R}^n)$ such that, for all~$u\in H_a(\mathbb{R}^n)$,
	\begin{equation}\label{r43308kfjvbnmghjksagfkuegwut}
		\mathcal{L}_a(u) := \mathcal{F}^{-1}(m(|\xi|^2)\hat{u}).\end{equation}
	
	Throughout this section, we say that~$U\in W^{1,1}_{\text{loc}}\left(\mathbb{R}^{n+1}_+\right)$ is a weak solution of~\eqref{Prob with Neumann} if
	\begin{enumerate}
		\item for any~$R>0$, we have that~$a(t)|\nabla U|^2\in L^1(B_{R}^{+})$;
		\item for any $\varphi \in C^{\infty}_c(\mathbb{R}^{n+1})$, it holds that
		\begin{equation}\label{weak solution}
			\int_{\mathbb{R}^{n+1}_+}a(t)\nabla U(x,t) \cdot \nabla \varphi(x,t) \,dx\,dt = \int_{\partial \mathbb{R}^{n+1}_+}f(x) \varphi(x,0)\,dx.
		\end{equation}
	\end{enumerate}
	Moreover, we say that~$U$ is a finite-energy weak solution
	of~\eqref{Prob with Neumann}
	if~$U\in X(\mathbb{R}^{n+1}_+,a)$ and it is a weak solution.
	
	In this setting, we have the following result:
	
	\begin{theorem}\label{Convolution enters the chat}
		Let~$a :\mathbb{R}_+\rightarrow\mathbb{R}_+$ be a continuously differentiable~$A_2$-Muckenhoupt weight.
		Let~$f \in \mathcal{L}_a(H_a(\mathbb{R}^n))\cap L^{1}_{\text{loc}}(\mathbb{R}^n)$.
		
		Then, all the finite-energy weak solutions of~\eqref{Prob with Neumann} are given by 
		\begin{equation}\label{Sol of Prob with Neumann}
			U= \mathcal{F}_x^{-1}\left(\tilde{g}(|\xi|^2,t)\hat{u}\right)+ c,  
		\end{equation}
		where, for all~$\xi\in{\mathbb{R}}^n$, $\tilde{g}(|\xi|^2,\cdot)$ is the even extension to~$t<0$ of the minimizer of~\eqref{inf of G } with~$\lambda := |\xi|^2$,
		$u \in H_a(\mathbb{R}^n)$ is the (unique) solution, in the sense of tempered distributions, of~$\mathcal{L}_a(u) = f$ and~$c\in \mathbb{R}$.
	\end{theorem}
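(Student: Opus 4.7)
The plan is to pass to Fourier variables in the tangential direction $x$, thereby reducing the weighted PDE in $\mathbb{R}^{n+1}_+$ to a one-parameter family of ODEs in $t$ indexed by the frequency $\xi \in \mathbb{R}^n$. For each fixed $\xi$, the equation $\div(a(t)\nabla U)=0$ becomes, at least formally,
\[
\partial_t\bigl(a(t)\,\partial_t \mathcal{F}_x U(\xi,t)\bigr) = a(t)\,|\xi|^2\, \mathcal{F}_x U(\xi,t),
\]
while the finite-energy condition in \eqref{space X} translates, via Plancherel in $x$, into an $L^2_\xi$-summability of the variational functional appearing in \eqref{inf of G } with $\lambda=|\xi|^2$. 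Together these force, at each $\xi$, the profile $t\mapsto \mathcal{F}_x U(\xi,t)$ to coincide, up to a scalar, with the minimizer $\tilde g(|\xi|^2,\cdot)$; the scalar is then determined by the Neumann datum through the identity $\mathcal{L}_a u = f$.

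\textbf{Existence.} First I would invoke Theorem~\ref{Minimization Problem} to obtain, for each $\lambda>0$, the unique minimizer $\phi_\lambda \in W^{1,1}_{\mathrm{loc}}(\mathbb{R}_+)$ of \eqref{inf of G } with $\phi_\lambda(0)=1$, and let $\tilde g(\lambda,t)$ be its even extension to $\mathbb{R}$. The Euler--Lagrange equation yields $\partial_t(a\,\partial_t\phi_\lambda)=a\lambda\phi_\lambda$ on $(0,\infty)$, and integrating $J(\phi_\lambda):=\int_0^\infty a(\lambda\phi_\lambda^2+|\phi_\lambda'|^2)\,dt$ by parts against $\phi_\lambda$ gives the boundary identity
\[
-\lim_{t\to 0^+} a(t)\,\partial_t\phi_\lambda(t) = m(\lambda).
\]
Given the essentially unique $u\in H_a(\mathbb{R}^n)$ with $\mathcal{L}_a u = f$, define $U$ via \eqref{Sol of Prob with Neumann} (initially with $c=0$). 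Plancherel in $x$ yields
\[
\int_{\mathbb{R}^{n+1}_+} a(t)\,|\nabla U|^2\,dx\,dt = \int_{\mathbb{R}^n} |\hat u(\xi)|^2\, m(|\xi|^2)\,d\xi = [u]_{H_a(\mathbb{R}^n)}^2 <+\infty,
\]
so $U\in X(\mathbb{R}^{n+1}_+,a)$. For \eqref{weak solution}, I would take $\varphi\in C^\infty_c(\mathbb{R}^{n+1})$, apply Plancherel in $x$ to $\int a(t)\nabla U\cdot\nabla\varphi$, integrate by parts in $t$, and use the ODE and the boundary identity to reproduce $\int_{\mathbb{R}^n} f(x)\varphi(x,0)\,dx$.

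\textbf{Uniqueness.} If $U_1$ and $U_2$ are two finite-energy weak solutions, then $W:=U_1-U_2$ is a finite-energy weak solution of the homogeneous Neumann problem. I would apply the finite-energy Liouville theorem of \cite{MR4204715} for divergence-form equations with $A_2$-Muckenhoupt weights on the half-space with zero Neumann data to conclude $W\equiv c$ for some $c\in\mathbb{R}$. This accounts exactly for the additive constant in \eqref{Sol of Prob with Neumann} and closes the classification.

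\textbf{Main obstacle.} The most delicate step is the rigorous transition between the ambient PDE and the $\xi$-parameterized family of ODEs: one needs measurability and appropriate regularity of $\lambda\mapsto\phi_\lambda$ in the weighted Sobolev topology, sharp enough decay estimates on $\phi_\lambda$ and $\partial_t\phi_\lambda$ as $t\to\infty$ to justify Fubini and the interchange of $\mathcal{F}_x$ with weak differentiation in $t$, and a careful identification of the limit trace $-\lim_{t\to 0^+} a(t)\partial_t\phi_\lambda(t) = m(\lambda)$ under a weight $a$ that may degenerate or blow up at $t=0$. These are precisely the points addressed by the variational analysis promised in Theorem~\ref{Minimization Problem} and the detailed one-dimensional study of \cite{MR3233760}, while the Muckenhoupt regularity framework of \cite{MR643158} supplies the functional-analytic background needed to close all the estimates.
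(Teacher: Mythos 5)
Your overall strategy is the same as the paper's: Fourier transform in $x$, reduce to the $\xi$-parameterized family of ODEs solved by the minimizer $\tilde g(|\xi|^2,\cdot)$, establish finite energy by Plancherel, and appeal to the finite-energy Liouville theorem of~\cite{MR4204715} for uniqueness. However, two load-bearing steps are not actually carried out, and in both cases your phrasing suggests a direct argument where the paper needs a less direct one.

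First, you propose to apply the Liouville theorem of~\cite{MR4204715} ``on the half-space with zero Neumann data.'' That theorem is not formulated for half-spaces with Neumann boundary conditions; it concerns finite-energy $p$-harmonic functions (and quasiminimizers) on full spaces. The paper bridges this gap by even-reflecting both the weight $a$ and the difference of two solutions across $\{t=0\}$, verifying by hand that the reflected function is a finite-energy weak solution of $\div(\tilde a\nabla\tilde U)=0$ in all of $\mathbb{R}^{n+1}$, and only then invoking~\cite{MR4204715}. Without this reflection step your uniqueness argument does not connect to the cited result.

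Second, your existence argument verifies the weak formulation by ``apply Plancherel, integrate by parts in $t$, and use the ODE,'' but for a general $u\in H_a(\mathbb{R}^n)$ the object $\mathcal F_x^{-1}\bigl(\tilde g(|\xi|^2,t)\hat u\bigr)$ is a priori only a tempered distribution, and interchanging $\mathcal F_x$ with $t$-differentiation and taking the boundary trace require justification. The paper handles this by first proving the weak formulation for $u\in\mathcal S(\mathbb{R}^n)$, then establishing a density lemma (that $\mathcal S(\mathbb{R}^n)$ is dense in $H_a(\mathbb{R}^n)$) together with $L^2_{\mathrm{loc}}$ and energy convergence of the corresponding extensions, and passing to the limit. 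You flag these issues in your ``Main obstacle'' paragraph but present them as things one ``would'' resolve rather than resolving them; as written, the proof of existence only works for Schwartz-class boundary data, and this is precisely the nontrivial part of the theorem.

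A smaller remark: the well-definedness of $\mathcal F_x^{-1}\bigl(\tilde g(|\xi|^2,t)\hat u\bigr)$ as a tempered distribution in $\mathbb{R}^{n+1}$ is itself not automatic and is where the growth bound $\int_0^t a(\tau)^{-1}\,d\tau\le C(1+t^2)$ for $A_2$-weights (Lemma~\ref{Growth bound on Muckenhoupt}) enters; you never invoke any quantitative control on $a$ near $0$ or at infinity, so this step is missing as well.
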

	
	{F}rom Theorem~\ref{Convolution enters the chat}, we obtain the following:
	
	\begin{corollary}\label{Rigidity result}
		Let~$f \in \mathcal{L}_a(H_a(\mathbb{R}^n))\cap L^1_{\text{loc}}(\mathbb{R}^n)$, $U\in X(\mathbb{R}^{n+1},a)$ be a finite-energy weak solution of~\eqref{Prob with Neumann} and~$u\in W^{1,1}_{\text{loc}}(\mathbb{R}^n)\cap H_a(\mathbb{R}^n)$ be the solution of~$\mathcal{L}_a(u)=f$ (in the
		sense of tempered distributions).
		
		If~$u$ is one-dimensional, then, for a.e.~$t\geq 0$, $U(\cdot,t)$ is also one-dimensional.
	\end{corollary}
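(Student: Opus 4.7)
The plan is to deduce the corollary directly from the representation formula in Theorem~\ref{Convolution enters the chat}. That result writes every finite-energy weak solution of~\eqref{Prob with Neumann} in the form $U(x,t)=\mathcal{F}_x^{-1}\bigl(\tilde g(|\xi|^2,t)\hat u(\xi)\bigr)+c$, where the Fourier multiplier $\tilde g(|\xi|^2,t)$ depends on $\xi$ only through $|\xi|^2$. This radial structure is precisely what transfers any translation symmetry of $u$ to the solution $U(\cdot,t)$.

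Concretely, suppose $u(x)=v(\omega\cdot x)$ for some $\omega\in\mathbb{S}^{n-1}$ and $v:\mathbb{R}\to\mathbb{R}$. For any fixed $y\in\omega^\perp$, consider the translate $\tilde U(x,t):=U(x+y,t)$. Clearly $\tilde U\in X(\mathbb{R}^{n+1}_+,a)$, and $\tilde U$ is a finite-energy weak solution of~\eqref{Prob with Neumann} with boundary data $x\mapsto f(x+y)$. Because $f=\mathcal{L}_a(u)$ is given by multiplication of $\hat u$ by the radial symbol $m(|\xi|^2)$, the operator $\mathcal{L}_a$ commutes with translations; since $u$ is invariant under translations in $\omega^\perp$, so is $f$. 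Hence $\tilde U$ and $U$ solve the same Neumann problem, and the uniqueness part of Theorem~\ref{Convolution enters the chat} (finite-energy solutions are unique up to an additive constant) yields
\[ U(x+y,t)=U(x,t)+c(y) \qquad\text{for some } c(y)\in\mathbb{R}, \]
for every $y\in\omega^\perp$.

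It then remains to show $c(y)=0$. The functional equation above is additive in $y$, so $c$ is a linear map on $\omega^\perp$; moreover, differentiating the identity along a fixed direction $e\in\omega^\perp$ shows that the partial derivative $\partial_e U(x+y,t)$ does not depend on $x$ or $t$. Setting $y=0$, we conclude that $\partial_e U$ is a constant on $\mathbb{R}^{n+1}_+$, equal to the component of the gradient of $c$ in the direction $e$. The finite-energy hypothesis $\int_{\mathbb{R}^{n+1}_+}a(t)|\nabla U|^2\,dx\,dt<+\infty$ then forces this constant to vanish, since otherwise the integral of $a(t)|\partial_e U|^2$ alone over $\mathbb{R}^{n+1}_+$ would already be infinite. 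Thus $U(\cdot+y,t)=U(\cdot,t)$ for every $y\in\omega^\perp$ and a.e.\ $t\geq 0$, which is precisely the claim that $U(\cdot,t)$ is one-dimensional in the direction $\omega$. The main obstacle is making this last step fully rigorous: since $U$ is only a weak solution, the pointwise differentiation used above must be justified either by running the argument with difference quotients inside the weak formulation of~\eqref{weak solution}, or by invoking the smoothing properties encoded in the representation formula to guarantee the requisite regularity of $U(\cdot,t)$ in the horizontal variables.
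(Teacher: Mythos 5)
Your approach is correct in spirit, but it takes a genuinely different route from the paper. The paper's argument is a one-line Fourier computation: since $u$ is one-dimensional and lies in $W^{1,1}_{\text{loc}}$, one has $\nu\cdot\nabla u=0$ for every $\nu$ orthogonal to the distinguished direction, and then property $(P_1)$ of Lemma~\ref{Gradient of U} gives directly
\[
(\nu,0)\cdot\nabla U \;=\; \nu\cdot\mathcal{F}_x^{-1}\bigl(i\xi\,\tilde g(|\xi|^2,t)\hat u\bigr)
\;=\;\mathcal{F}_x^{-1}\bigl(\tilde g(|\xi|^2,t)\,\widehat{\nu\cdot\nabla u}\bigr)=0.
\]
That is, the radial multiplier commutes with directional derivatives, so the vanishing of $\nu\cdot\nabla u$ is transferred to $\nu\cdot\nabla_x U$ with no further work. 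You instead exploit the radiality at the level of symmetry: translation-invariance of $u$ in $\omega^\perp$ passes to $f=\mathcal{L}_a(u)$, hence by the uniqueness of finite-energy solutions (Lemma~\ref{Uniqueness of finite energy}) one gets $U(x+y,t)=U(x,t)+c(y)$, and then an additional additivity-plus-energy argument kills $c$. Both proofs ultimately rest on the same structural fact (the symbol depends only on $|\xi|$), but the paper applies it at the gradient level while you apply it at the solution level and need to pay the extra cost of eliminating the additive constant.

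The step you flag as potentially delicate can indeed be closed, and does not require regularity beyond what the paper already provides. The function $c$ is additive on $\omega^\perp$, and it is continuous because for any test function $\varphi$ with $\int\varphi\ne0$ one has $c(y)=\bigl(\int\varphi\bigr)^{-1}\int\bigl(U(x+y,t)-U(x,t)\bigr)\varphi(x,t)\,dx\,dt$, which is continuous in $y$ since $U\in L^2_{\text{loc}}$ by $(P_3)$; additive plus continuous gives $c(y)=\alpha\cdot y$ for some $\alpha\in\omega^\perp$. Next, for $e\in\omega^\perp$ the difference quotients $s^{-1}\bigl(U(x+se,t)-U(x,t)\bigr)$ equal $\alpha\cdot e$ for every $s$, and since $U\in H^1_{\text{loc}}(\mathbb{R}^{n+1}_+,a)$ they converge in $L^2_{\text{loc}}$ to $e\cdot\nabla_x U$; hence $e\cdot\nabla_x U\equiv\alpha\cdot e$ almost everywhere. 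Finite energy then forces $\alpha\cdot e=0$ (otherwise the integral $\int a(t)|e\cdot\nabla_x U|^2$ over $\mathbb{R}^{n+1}_+$ would be infinite, since $a>0$ and the domain has infinite measure), so $\alpha=0$. This salvages your proof, though at the price of a detour the paper's short Fourier computation avoids entirely.
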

	
	\begin{remark}\label{BLW}
		In the particular case where~$a(t):= t^{1-2s}$ for some~$s\in(0,1)$, 
		the problem in~\eqref{Prob with Neumann} is related with
		the $s$-harmonic extension of the fractional Laplacian onto the upper half-space~$\mathbb{R}^{n+1}_+$. 
		
		More precisely, when~$a(t)=t^{1-2s}$, by a scaling argument one can check that~~$m(\lambda) = \lambda^{s} m(1)$ (see formula~(4.9) in~\cite{MR3233760}).
		As a result, for every~$\xi\in\R^n\setminus\{0\}$,
		we have that~$m({|\xi|^2}) = |\xi|^{2s} m(1)$, and therefore~$H_a(\mathbb{R}^n)$ boils down to the space of functions~$u\in \mathcal{S}'(\mathbb{R}^n)$
		such that~$\hat{u}$ is a measurable function and 
		$$
		\int_{\mathbb{R}^n}|\xi|^{2s} |\hat{u}(\xi)|^2\,d\xi<+\infty.
		$$ 
		
		Furthermore, as shown in~\cite{MR2354493}, a finite energy solution of 
		\begin{equation*}
			\begin{cases}
				\div(t^{1-2s} \nabla U)=0 &\text{in }\, \mathbb{R}^{n+1}_+,\\
				-\displaystyle\lim_{t\to 0^+}a(t)\partial_t U = f &\text{on }\, \partial\mathbb{R}^{n+1}_+
			\end{cases}
		\end{equation*}
		is given by the convolution of~$u$ with the $s$-Poisson kernel~$P_s$, i.e., $U(x,t) = P_s(\cdot,t)\ast u(x)$. Also, $$-\displaystyle\lim_{t\to 0^+} t^{1-2s} \partial_t U(x,t) = (-\Delta)^s u(x),$$ up to constants, where~$u(x) = U(x,0)$. 
		
		Therefore, if~$u \in H_{t^{1-2s}}(\mathbb{R}^n)$, 
		from Theorem~\ref{Convolution enters the chat} we have that~$P_s (\cdot, t)\ast u(x) = \mathcal{F}_x^{-1}(\tilde{g}(|\xi|^2,t) \hat{u})$, and consequently~$\mathcal{F}_x \left(P_s\right) (\xi, t) = \tilde{g}(|\xi|^2,t)$.  
		
		This also shows that~$\mathcal{F}_x(P_s) (\xi, \cdot)$ is the unique solution of the variational problem~\eqref{inf of G }. 
	\end{remark}
	
	
	\subsection{Towards the proof of Theorem~\ref{Convolution enters the chat}}
	Our strategy to prove Theorem~\ref{Convolution enters the chat} is to first show the uniqueness, up to constants, of finite-energy weak solutions of~\eqref{Prob with Neumann} and secondly to construct an explicit solution.
	
	Showing the uniqueness of finite-energy weak solutions of~\eqref{Prob with Neumann} is rather standard. Indeed, we exploit the methods in Section~4 of~\cite{MR2354493} to extend a weak solution of~\eqref{Prob with Neumann} with~$f\equiv 0$ to the whole space~$\mathbb{R}^{n+1}$, and combine this with a finite-energy Liouville Theorem provided in~\cite{MR4204715} to conclude that a finite-energy weak solution of
	$$
	\div(\tilde{a}(t)\nabla U ) = 0\quad \text{in }\mathbb{R}^{n+1}
	$$
	is constant, where~$\tilde{a}$ is the even extension of~$a$ to the whole of~$\mathbb{R}$.
	
	Constructing a finite-energy weak solution of~\eqref{Prob with Neumann} is instead more involved. Indeed, our method roughly consists of ``Fourier transforming''~\eqref{Prob with Neumann} in the~$x$-variable, thus obtaining a second-order ordinary differential equation, which is satisfied by a minimizer of~\eqref{inf of G }. Then, using the methods in~\cite{MR3233760} one can deduce several properties of the minimizer of~\eqref{inf of G }, which are summarised in Theorem~\ref{Minimization Problem} below.
	
	{F}rom these properties, we show that~\eqref{Sol of Prob with Neumann} is a well-defined tempered distribution in~$\mathbb{R}^{n+1}$ and that it is an element of~$X(\mathbb{R}^{n+1}_+,a).$ 
	{F}urthermore, the limit properties satisfied by the minimizer of~\eqref{inf of G } presented in Theorem~\ref{Minimization Problem} lead to limit properties of~\eqref{Sol of Prob with Neumann}, one being that the Neumann boundary condition in~\eqref{Prob with Neumann} is satisfied. 
	Then, using a density-type argument, we show that~\eqref{Sol of Prob with Neumann} is a weak solution of~\eqref{Prob with Neumann}, thus finishing the proof of Theorem~\ref{Convolution enters the chat}.
	\medskip
	
	We start by dealing with the uniqueness of solutions of~\eqref{Prob with Neumann} in~$X({\mathbb{R}}^{n+1}_+,a)$ up to a constant factor.
	
	\begin{lemma}\label{Uniqueness of finite energy}
		Let~$U_1$, $U_2\in X({\mathbb{R}}^{n+1}_+,a)$ 
		be two finite-energy weak solutions of~\eqref{Prob with Neumann} with boundary datum~$f \in L^{1}_{\text{loc}}(\mathbb{R}^n)$. 
		
		Then, we have that~$U_1 = U_2 + c$, for some~$c\in \mathbb{R}$.
	\end{lemma}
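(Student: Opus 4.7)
The natural plan is to reduce the uniqueness statement to a Liouville-type result on the whole space by an even reflection across~$\{t=0\}$. Set~$W := U_1 - U_2$. By linearity of~\eqref{weak solution}, $W\in X(\mathbb{R}^{n+1}_+,a)$ is a finite-energy weak solution of~\eqref{Prob with Neumann} with zero Neumann datum, that is,
\begin{equation*}
\int_{\mathbb{R}^{n+1}_+} a(t)\,\nabla W(x,t)\cdot\nabla\varphi(x,t)\,dx\,dt = 0 \qquad\text{for every } \varphi\in C^\infty_c(\mathbb{R}^{n+1}).
\end{equation*}
It suffices to show that~$W$ is constant.

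The key step is to extend~$W$ evenly to the whole of~$\mathbb{R}^{n+1}$ by setting~$\widetilde W(x,t):= W(x,|t|)$ and, analogously, to extend the weight by~$\widetilde a(t):= a(|t|)$. First I would check that~$\widetilde W\in W^{1,1}_{\mathrm{loc}}(\mathbb{R}^{n+1})$ with the expected weak derivatives (the normal derivative is odd and the tangential ones are even), so that
\begin{equation*}
\int_{\mathbb{R}^{n+1}} \widetilde a(t)\,|\nabla\widetilde W(x,t)|^2\,dx\,dt = 2\int_{\mathbb{R}^{n+1}_+} a(t)\,|\nabla W(x,t)|^2\,dx\,dt < +\infty,
\end{equation*}
and hence~$\widetilde W\in X(\mathbb{R}^{n+1},\widetilde a)$. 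Next I would verify that~$\widetilde W$ is a weak solution of~$\operatorname{div}(\widetilde a(t)\nabla\widetilde W) = 0$ in the whole of~$\mathbb{R}^{n+1}$. Given any~$\Phi\in C^\infty_c(\mathbb{R}^{n+1})$, I would split it as~$\Phi=\Phi_++\Phi_-$ with~$\Phi_\pm$ supported in~$\{\pm t\ge0\}$ (after symmetrizing~$\Phi$ into an even part and testing against the even part, since the odd part pairs to zero by the oddness of~$\partial_t\widetilde W$). Using the change of variables~$t\mapsto -t$ in the lower half-space and the vanishing Neumann datum on~$\{t=0\}$, the cross-terms at the interface cancel and one obtains~$\int_{\mathbb{R}^{n+1}} \widetilde a(t)\,\nabla\widetilde W\cdot\nabla\Phi\,dx\,dt = 0$, exactly as in Section~4 of~\cite{MR2354493}.

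Once this extension is in place, I would invoke the fact that~$\widetilde a$ is still an~$A_2$-Muckenhoupt weight on~$\mathbb{R}^{n+1}$ (this is straightforward from the definition, using that~$a$ is~$A_2$ on~$\mathbb{R}_+$ and that cubes in~$\mathbb{R}^{n+1}$ can be compared with their reflected counterparts) and apply the finite-energy Liouville theorem from~\cite{MR4204715}: any function in~$X(\mathbb{R}^{n+1},\widetilde a)$ that is weakly~$\widetilde a$-harmonic on~$\mathbb{R}^{n+1}$ must be constant. This forces~$\widetilde W\equiv c$ for some~$c\in\mathbb{R}$, and restricting to~$\mathbb{R}^{n+1}_+$ gives~$U_1 = U_2 + c$.

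The main technical obstacle is the reflection step, and specifically the justification that the Neumann condition with vanishing datum gets absorbed into the equation in~$\mathbb{R}^{n+1}$ when the weight degenerates or blows up at~$\{t=0\}$ in the~$A_2$-Muckenhoupt regime. The other routine checks — even symmetry of weak derivatives, finite energy of~$\widetilde W$, and the~$A_2$ property of~$\widetilde a$ — are standard once one writes out the definitions; the only non-trivial input beyond them is the Liouville theorem of~\cite{MR4204715}, whose hypotheses I would verify match our~$\widetilde a$.
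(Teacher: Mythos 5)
Your argument matches the paper's own proof: both pass to the difference $W=U_1-U_2$, extend it and the weight $a$ evenly across $\{t=0\}$, verify via a change of variables (using the zero Neumann datum twice) that the reflection is a finite-energy weak solution of $\div(\widetilde a\nabla\widetilde W)=0$ on all of $\mathbb{R}^{n+1}$, and then invoke the finite-energy Liouville theorem of \cite{MR4204715}. The one point the paper spells out that you leave at the level of ``verify the hypotheses'' is that, by strict convexity of the energy, $\widetilde W$ is a minimizer of $\int\widetilde a|\nabla\cdot|^2$, which is what places it within the scope of the quasiminimizer Liouville theorem of \cite{MR4204715}; this, like the $A_2$ property of $\widetilde a$ that you correctly flag, is a routine check.
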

	
	\begin{proof}
		Since~$U_1$ and~$U_2$ are finite-energy weak solutions of~\eqref{Prob with Neumann} we have that~$U:=U_1-U_2$ is a finite-energy
		weak solution of
		\begin{equation}\label{eq in r n+1 +}
			\begin{cases}
				\div(a(t)\nabla U) = 0&\;\text{ in } \; {\mathbb{R}}^{n+1}_+,\\
				\displaystyle\lim_{t\to 0^+}a(t)\partial_t U(x,t) = 0&\;\text{ on }\; \partial {\mathbb{R}}^{n+1}_+.
			\end{cases}
		\end{equation}
		
		Now, for any~$t\in \mathbb{R}\setminus \{0\}$, we define
		$$ \widetilde{a}(t):=\begin{cases}
			a(t) &{\mbox{ if }} t>0,\\
			a(-t) &{\mbox{ if }} t<0
		\end{cases} 
		\qquad {\mbox{and}} \qquad
		\tilde{U}(x,t) :=\begin{cases}
			U(x,t) &{\mbox{ if }} t\ge 0,\\
			U(x,-t) &{\mbox{ if }} t<0.
		\end{cases} $$    
		Observe that, by construction, $\tilde{U}$ has finite energy in~$\mathbb{R}^{n+1}$,
		namely~$\tilde{U}\in X({\mathbb{R}}^{n+1},\widetilde{a})$.
		Hence (by the weighted Poincar\'e inequalities in Section~1 of~\cite{MR643158}),
		we have that~$
		\tilde{U}\in H^1_{\text{loc}}(\mathbb{R}^{n+1},\tilde{a})$.
		
		Furthermore, we claim that~$\tilde{U}$ is a finite-energy weak solution of 
		\begin{equation}\label{equation r n+1}
			\div(\tilde{a}(t) \nabla \tilde{U}) = 0\quad\text{in }\mathbb{R}^{n+1}.
		\end{equation}
		To establish the validity of this claim, we will show that,
		for any~$\varphi \in C^{\infty}_c(\mathbb{R}^{n+1})$,
		$$
		\int_{\mathbb{R}^{n+1}}\tilde{a}(t) \nabla \tilde{U}\cdot \nabla \varphi \,dx\,dt = 0.
		$$
		Indeed, for	~$\varphi \in C^{\infty}_c(\mathbb{R}^{n+1})$,
		using twice the fact that~$U$ is a weak solution of~\eqref{eq in r n+1 +}, we find that
		\begin{equation*}
			\begin{split}
				\int_{\mathbb{R}^{n+1}}\tilde{a}(t) \nabla \tilde{U}\cdot \nabla \varphi \,dx\,dt &= \int_{\mathbb{R}^{n+1}_+}a(t) \nabla U\cdot \nabla \varphi \,dx\,dt + \int_{\mathbb{R}^{n+1}_{-}}a(-t) \nabla \tilde{U}\cdot \nabla \varphi \,dx\,dt \\
				&= \int_{\mathbb{R}^{n+1}_{-}}a(-t) \big(\nabla_x U(x,-t), -\partial_t U(x,-t)\big)\cdot \nabla \varphi (x,t)\,dx\,dt\\
				&= \int_{\mathbb{R}^{n+1}_{+}}a(\tau) \nabla U(x,\tau) \cdot \big(
				\nabla_x \varphi(x,-\tau), -\partial_\tau\varphi(x,-\tau) \big) \,dx\,d\tau \\
				&= \int_{\mathbb{R}^{n+1}_{+}}a(\tau) \nabla U(x,\tau) \cdot \nabla\tilde\varphi(x,t)\,dx\,d\tau\\
				&=0,
			\end{split}
		\end{equation*}
		where~$\tilde\varphi(x,t):=\varphi(x,-t)$ for all~$(x,t)\in\mathbb{R}^{n+1}_{+}$.
		Therefore~\eqref{equation r n+1} is established.
		
		{F}rom~\eqref{equation r n+1} and the fact that the energy functional
		$$ \int_{\mathbb{R}^{n+1}} \tilde{a}(t)|\nabla \tilde{U}|^2\,dx\,dt $$
		is strictly convex, we deduce that~$\tilde{U}$ is the (unique)
		minimizer up to an additive constant.
		
		Hence, as a direct consequence of the Finite-Energy Liouville Theorem in~\cite{MR4204715} we have that~$\tilde{U}$ is constant,
		and thus so is~$U$, as desired.
	\end{proof}
	
	Before starting the construction of solutions of~\eqref{Prob with Neumann} we give two properties of Muckenhoupt weights that will be useful later on to show that~\eqref{Sol of Prob with Neumann} is a well-defined tempered distribution:
	one property refers to the integrability of Muckenhoupt weights and the other is a growth upper bound of $A_2$-Muckenhoupt weights. We believe
	that properties like these should be known in the literature, but we did not find any references that include them, as such, for the readers' convenience, we state these results here and include detailed proofs in Appendix~\ref{gyu4io3vbcnx76859430}.
	
	\begin{lemma}\label{Muckenhoupt is not L1}
		Let~$\omega:  {\mathbb{R}}_+ \rightarrow {\mathbb{R}}_+$ be an~$A_2$-Muckenhoupt weight. Then, $\omega \notin L^1(\mathbb{R}_+)$. 
	\end{lemma}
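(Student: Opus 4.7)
The plan is to argue by contradiction: suppose $\omega \in L^1(\mathbb{R}_+)$ with $M := \|\omega\|_{L^1(\mathbb{R}_+)}$, and derive a contradiction by playing the $A_2$ upper bound against the Cauchy--Schwarz lower bound at two different but comparable scales. The idea is that $A_2$ forces the product of the averages of $\omega$ and $1/\omega$ on a ball to be bounded above by a universal constant, and Cauchy--Schwarz forces the same product (on any sub-ball) to be bounded below by $1$; if $\omega$ were integrable, combining these on nested intervals would fail the decay predicted by integrability.

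First, I would extract from the local integrability of $1/\omega$ that $\omega > 0$ almost everywhere, so that $m := \int_0^{R_0} \omega(x)\,dx > 0$ for some fixed $R_0 > 0$. Applying the $A_2$ hypothesis to the interval $(0, 2R) = B_R(R)$ of length $2R$, for every $R \ge R_0/2$ the lower bound $\int_0^{2R} \omega \ge m$ yields
\[
\int_0^{2R} \frac{1}{\omega(x)}\,dx \;\le\; \frac{4CR^2}{m}.
\]

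Second, I would localize to the sub-interval $(R, 2R)$ of length $R$ and apply the elementary Cauchy--Schwarz bound
\[
R^2 \;=\; \left(\int_R^{2R} \sqrt{\omega(x)} \cdot \frac{1}{\sqrt{\omega(x)}}\,dx\right)^2 \;\le\; \left(\int_R^{2R}\omega(x)\,dx\right)\left(\int_R^{2R}\frac{1}{\omega(x)}\,dx\right).
\]
Using the inclusion $(R, 2R) \subset (0, 2R)$ and the upper bound from the previous step, this gives
\[
\int_R^{2R} \omega(x)\,dx \;\ge\; \frac{R^2}{\int_0^{2R} 1/\omega} \;\ge\; \frac{m}{4C},
\]
a positive constant independent of $R$.

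On the other hand, $\omega \in L^1(\mathbb{R}_+)$ implies $\int_R^{2R}\omega \le \int_R^{\infty}\omega \to 0$ as $R \to +\infty$, which contradicts the uniform lower bound above. I do not anticipate a serious obstacle; the only delicate point is the choice of scales, arranged so that the $R^2$ factor from the $A_2$ estimate on $(0,2R)$ exactly cancels the $R^2$ factor from Cauchy--Schwarz on $(R,2R)$, leaving a constant incompatible with the integrability of $\omega$.
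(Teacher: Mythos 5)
Your proof is correct, and it is genuinely more elementary than the paper's. The paper argues by contradiction as well, but it invokes Proposition~7.2.8 of Grafakos' book (a reverse--H\"older--type estimate, $\mathcal{I}(J)/\mathcal{I}(I)\le C(|J|/|I|)^\delta$, whose proof relies on the reverse H\"older inequality for $A_p$ weights), applied to the nested intervals $I_j=(0,j)$ and $J_j=(0,j^{1/2})$: if $\omega\in L^1$ the ratio of the integrals tends to $1$ while the right-hand side tends to $0$. Your argument avoids this machinery entirely: you use only the raw $A_2$ inequality on $(0,2R)$ to get the upper bound $\int_0^{2R}\omega^{-1}\le 4CR^2/m$, and then Cauchy--Schwarz on $(R,2R)$ to get the lower bound $\int_R^{2R}\omega\ge m/(4C)$, which is incompatible with $\int_R^{2R}\omega\to 0$. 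The cancellation of the two $R^2$ factors is the whole point, and you have arranged the scales so it happens cleanly. What your route buys is self-containedness (no external proposition, no $\delta$-exponent from reverse H\"older); what the paper's route buys is symmetry with its later Lemma~4.5 and the proof of Lemma~4.9, where the same Grafakos proposition is reused, so citing it once and using it thrice is economical there. Both are sound.

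One small presentational point: you should state explicitly that the interval $(0,2R)$ is an admissible ball for the $A_2$ condition on $\mathbb{R}_+$ (it is the ball of radius $R$ centered at $R$); the paper makes the same tacit use of such half-line--touching intervals in Lemmas~4.5 and~4.9, so this is consistent with their conventions, but naming it removes any ambiguity.
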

	
	\begin{lemma}\label{Growth bound on Muckenhoupt}
		Let~$\omega: {\mathbb{R}}_+ \rightarrow {\mathbb{R}}_+$ be an~$A_2$-Muckenhoupt weight.
		
		Then, there exists~$C>0$ such that, for all~$t\ge0$,
		\begin{equation*}
			\int_{0}^{t}\frac{d\tau}{\omega(\tau)}\leq C(1+t^2).
		\end{equation*}
	\end{lemma}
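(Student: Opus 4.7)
\medskip

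\noindent\textbf{Proof plan for Lemma~\ref{Growth bound on Muckenhoupt}.}
The plan is to feed a single symmetric interval into the $A_2$-Muckenhoupt condition, and then separate the regimes $t\le 1$ and $t\ge 1$. First, I would extend $\omega$ evenly to $\widetilde\omega:\mathbb{R}\to\mathbb{R}_+$, which is still an $A_2$ weight with the same constant. Applying the $A_2$ condition to the one-dimensional ball $B=(-t,t)$ of Lebesgue measure $2t$ gives
\begin{equation*}
\frac{1}{(2t)^2}\left(2\int_0^t \omega(\tau)\,d\tau\right)\left(2\int_0^t \frac{d\tau}{\omega(\tau)}\right)\le C,
\end{equation*}
so, setting
\begin{equation*}
F(t):=\int_0^t \omega(\tau)\,d\tau \qquad\text{and}\qquad G(t):=\int_0^t\frac{d\tau}{\omega(\tau)},
\end{equation*}
we obtain the master inequality $F(t)\,G(t)\le C\,t^2$ for every $t>0$. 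The goal is then to bound $G(t)$ by $C(1+t^2)$, for which it suffices to produce a positive lower bound on $F$ away from zero.

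Next I would separate two cases. For $t\ge 1$, since $F$ is non-decreasing I have $F(t)\ge F(1)$, so from the master inequality
\begin{equation*}
G(t)\le \frac{C\,t^2}{F(1)},
\end{equation*}
giving the desired quadratic bound provided $F(1)>0$. For $0\le t\le 1$, since $G$ is also non-decreasing, $G(t)\le G(1)$, and evaluating the master inequality at $t=1$ yields $G(1)\le C/F(1)$. Combining the two ranges produces a constant $C'>0$ depending only on the $A_2$ constant and on $F(1)$ such that $G(t)\le C'(1+t^2)$ for every $t\ge 0$.

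The only non-routine step is the strict positivity $F(1)>0$, which is the main obstacle but is in fact forced by the $A_2$ condition itself: if $F(1)=0$, then $\omega\equiv 0$ a.e.\ on $(0,1)$, which would make $1/\omega$ non-integrable on any subinterval and violate the $A_2$ inequality on $(-1,1)$ (the second factor becomes infinite while the first factor is finite by local integrability). Hence $F(1)>0$ and the argument closes. This approach uses no structural property of $\omega$ beyond local integrability, positivity a.e., and the $A_2$ condition, and it automatically yields the sharp quadratic growth rate appearing on the right-hand side of the statement.
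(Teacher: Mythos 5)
Your proof is correct and follows essentially the same approach as the paper: apply the $A_2$ inequality on intervals anchored at the origin to obtain $F(t)G(t)\le Ct^2$, then treat $t\le 1$ and $t\ge 1$ separately using monotonicity of $F$ and $G$ and a positive lower bound on $F(1)$. One small remark: your justification that $F(1)>0$ via an $A_2$ violation is delicate, since if $F(1)=0$ then the $A_2$ product on $(-1,1)$ becomes the indeterminate form $0\cdot\infty$ rather than a manifest contradiction; the cleaner route (and the one the paper takes) is simply that $\omega$ is strictly positive and measurable, which directly forces $\int_0^1\omega\,d\tau>0$.
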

	
	We now start the construction of solutions of~\eqref{Prob with Neumann} by presenting some properties of the (unique) solution of the one-dimensional minimization problem described in~\eqref{inf of G }.
	To this end, we introduce the space
	$$ H^1_1(\mathbb{R}_+,a):= \left\{\phi \in W^{1,1}_{\text{loc}}(\mathbb{R}_+)\;\text{s.t.}\; \displaystyle\int_{\mathbb{R}_+}a(t)\left(|\phi(t)|^2+|\phi'(t)|^2\right)\,dt<+\infty \,\text{and}\,\, \phi(0)=1
	\right\}.$$
	Also, for~$\lambda>0$ and~$\phi \in H^1_1(\mathbb{R}_+,a)$ we define the functional 
	\begin{equation*}
		G_{\lambda}(\phi):= \int_{\mathbb{R}_+}a(t)\left( \lambda|\phi(t)|^2+ |\phi'(t)|^2\right)\,dt.
	\end{equation*}
	
	The problem of minimizing~$G_{\lambda}$ over~$H^{1}_{1}(\mathbb{R}_+,a)$ has already been studied in~\cite{MR3233760}
	in the particular case in which~$a(t)= t^{1-2s}$ with~$s\in (0,1)$
	(see Theorem~4.2 in~\cite{MR3233760}). In fact, following the same arguments one arrives at the same conclusions in our setting:
	
	\begin{theorem}\label{Minimization Problem}
		For all~$\lambda>0$, there exists a unique~$g(\lambda,\cdot)\in H^{1}_{1}(\mathbb{R}_+, a)$ such that 
		\begin{equation*}
			m(\lambda):=\inf_{u \in H_{1}^{1}(\mathbb{R}_+,a)} G_{\lambda}(u) = G_{\lambda}(g(\lambda,\cdot)).
		\end{equation*}
		Moreover, $g(\lambda,\cdot)\in C^{2}(\mathbb{R}_+)$ and\footnote{We point out that
			in Theorem~4.2 in~\cite{MR3233760}, one obtains that~$g(\lambda,\cdot)$ is a smooth function because in that theorem the weight~$a$ is smooth in~$\mathbb{R}_+$. In our general setting, we assume that~$a$ is only continuously differentiable in~$\mathbb{R}_+$, and therefore~$g(\lambda,\cdot)$ is only twice continuously differentiable.} satisfies the following properties:
		\begin{enumerate}
			\item[(1)] $g(\lambda,\cdot)$ is a solution of
			\begin{equation}\label{Fourier transform of eq for kernel}
				\begin{cases}
					a(t)\partial^2_{t} g(\lambda,t) + a'(t)\partial_tg(\lambda,t)- \lambda a(t)g(\lambda,t)=0 \; \text{ in }\; \mathbb{R}_+,\\
					g(\lambda,0)=1;
				\end{cases}
			\end{equation}
			\item[(2)] $\partial_tg(\lambda,t) \leq 0$ for all~$t\in\R_+$;
			\item[(3)] $\displaystyle\lim_{t \to +\infty} g(\lambda,t) = 0$;
			\item[(4)] $\displaystyle-\lim_{t \to 0^+}a(t)\partial_t g(\lambda,t) = m(\lambda)$.
		\end{enumerate}
	\end{theorem}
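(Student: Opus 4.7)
The plan is to follow the template of Theorem~4.2 in~\cite{MR3233760} (which treats the special case $a(t) = t^{1-2s}$) and adapt each step using the general $A_2$-Muckenhoupt estimates supplied by Lemmata~\ref{Muckenhoupt is not L1} and~\ref{Growth bound on Muckenhoupt}. First, I would establish existence and uniqueness of the minimizer via the direct method: since $\lambda > 0$, the functional $G_\lambda$ coercively controls simultaneously $\int_0^\infty a|\phi|^2\,dt$ and $\int_0^\infty a|\phi'|^2\,dt$, so any minimizing sequence is bounded in $H^1(\mathbb{R}_+, a)$. The weighted Sobolev and trace theory of~\cite{MR643158} ensures existence of a trace at $t=0$ and its continuity under weak convergence, so the constraint $\phi(0)=1$ passes to a weak limit $g(\lambda,\cdot)$. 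Weak lower semicontinuity of $G_\lambda$ (convex in $(\phi,\phi')$) delivers a minimizer, and strict convexity on the affine class $\{\phi(0)=1\}$ gives uniqueness.

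Second, I would derive the Euler--Lagrange equation by testing against compactly supported variations $\psi$ with $\psi(0) = 0$: this yields $\int_0^\infty a(t)(\lambda g\psi + g'\psi')\,dt = 0$, i.e. $(ag')' = \lambda a g$ weakly. Since $a\in C^1(\mathbb{R}_+)$ is strictly positive on $\mathbb{R}_+$, rewriting the equation as $g'' = -(a'/a)g' + \lambda g$ and invoking standard ODE regularity upgrades $g(\lambda,\cdot)$ to a classical $C^2(\mathbb{R}_+)$ solution, establishing property~(1).

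The remaining properties then fall out of ODE analysis. For nonnegativity, $|g|$ is an admissible competitor with the same energy, so by uniqueness $g\geq 0$. From $(ag')' = \lambda a g \geq 0$, the map $t\mapsto a(t)g'(t)$ is nondecreasing; if $a(t_0)g'(t_0)>0$ at some $t_0$, then $g'(t)\geq c/a(t)$ for $t\geq t_0$, and the growth bound of Lemma~\ref{Growth bound on Muckenhoupt} (applied to $1/a$, which also belongs to the $A_2$-class) would force $g(t)$ to grow without bound, contradicting the finite energy. Hence $ag'\leq 0$, giving property~(2). Property~(3) then follows because $g$ is nonnegative, monotone, and square-integrable against $a(t)\,dt$ with $a\notin L^1(\mathbb{R}_+)$ by Lemma~\ref{Muckenhoupt is not L1}, forcing $\lim_{t\to\infty} g(\lambda,t) = 0$. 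Finally, property~(4) follows from multiplying the ODE by $g$ and integrating by parts on $[0,T]$: $[agg']_0^T = \int_0^T a((g')^2 + \lambda g^2)\,dt$; the boundary term at $T$ vanishes as $T\to\infty$ (since $agg'$ is nondecreasing, nonpositive, and bounded below by $-G_\lambda(g)$, so has a limit that must be $0$ by the integrability of $a(g')^2$ and $ag^2$), yielding $-a(0^+)\partial_t g(\lambda, 0^+) = G_\lambda(g) = m(\lambda)$.

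The main obstacle will be the careful treatment of boundary behavior at $t=0$ and at $t=+\infty$. Near $t=0$, the pointwise constraint $g(\lambda,0)=1$ in the class $H_1^1(\mathbb{R}_+,a)$ must be rigorously justified through a weighted trace theorem, which is nontrivial because for a generic $A_2$-weight the weighted Sobolev space may not admit an immediate pointwise evaluation. At infinity, justifying the vanishing of the boundary term $agg'$ in the integration by parts for~(4), as well as the convergence in~(3), requires combining finite energy, monotonicity, and the nonintegrability of $a$ in a coherent way. These are precisely the steps where the general Muckenhoupt estimates of Lemmata~\ref{Muckenhoupt is not L1} and~\ref{Growth bound on Muckenhoupt} take the place of the explicit power-weight computations available in~\cite{MR3233760}.
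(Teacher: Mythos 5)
Your proposal takes essentially the same route as the paper, which itself simply refers to the argument of Theorem~4.2 in~\cite{MR3233760} (proved there for $a(t)=t^{1-2s}$) and asserts that the same steps go through in the general $A_2$ setting. Your sketch --- direct method plus strict convexity for existence and uniqueness, Euler--Lagrange and ODE bootstrap for $(1)$, comparison with $|g|$ and monotonicity of $t\mapsto a(t)g'(t)$ for $(2)$, nonintegrability of $a$ for $(3)$, and integration by parts with the vanishing of the far boundary term $a\,g\,g'$ for $(4)$ --- is exactly the adaptation intended, and the treatment of the $t\to\infty$ boundary term (using that $agg'$ is nondecreasing, nonpositive, and that $a\,g\,|g'|\in L^1$ by Cauchy--Schwarz) is sound.

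One slip: to rule out $a(t_0)g'(t_0)>0$ you need $\int_{t_0}^{t}\frac{d\tau}{a(\tau)}\to+\infty$ as $t\to\infty$, and this does \emph{not} follow from Lemma~\ref{Growth bound on Muckenhoupt}, which only gives the one-sided estimate $\int_{0}^{t}\frac{d\tau}{a(\tau)}\le C(1+t^2)$. The correct tool is Lemma~\ref{Muckenhoupt is not L1} applied to $1/a$ (the reciprocal of an $A_2$ weight being again $A_2$, by the symmetry of the $A_2$ condition), which gives $1/a\notin L^1(\mathbb{R}_+)$, hence $g(t)\ge g(t_0)+c\int_{t_0}^t\frac{d\tau}{a(\tau)}\to+\infty$; combined with $a\notin L^1(\mathbb{R}_+)$ this contradicts $\int_{\mathbb{R}_+}a\,g^2<\infty$. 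You do cite both lemmata in your closing paragraph, so you clearly have both tools in mind, but the in-line reference in the argument points to the wrong one. A smaller remark: the well-posedness of the trace constraint $\phi(0)=1$ need not invoke a heavy weighted trace theorem; it follows directly from $1/a\in L^1_{\text{loc}}([0,\infty))$ (a consequence of the $A_2$ condition, as noted in the paper's proof of Lemma~\ref{Asymptotic Properties}) and Cauchy--Schwarz, which give $\phi'\in L^1(0,1)$ and hence continuity of $\phi$ up to $t=0$.
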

	
	\begin{remark}\label{m is a measurable function}
		Firstly note that, by definition, $m(\lambda)$ is an increasing function of~$\lambda$ and that~$m(\lambda)\geq 0$ for any~$\lambda >0$. Furthermore, for~$\lambda$, $\eps>0$ we have that
		\begin{equation*}
			\begin{split}&
				m(\lambda+\eps) =
				\inf_{u \in H_{1}^{1}(\mathbb{R}_+,a)} G_{\lambda+\eps}(u) 
				= \inf_{u \in H_{1}^{1}(\mathbb{R}_+,a)}\left(G_{\lambda}(u)+\eps \int_{\mathbb{R}_+}a(t)|u(t)|^2\,dt\right)\\
				&\qquad\leq m(\lambda)+\eps\int_{\mathbb{R}_+}a(t)|g(\lambda,t)|^2\,dt
				\leq m(\lambda) + \frac{m(\lambda)}{\lambda}\eps.
			\end{split}
		\end{equation*}
		This, together with the fact that~$m(\lambda+\eps)\geq m(\lambda)$, implies that 
		$$
		|m(\lambda+\eps)-m(\lambda)|\leq  \frac{m(\lambda)}{\lambda}\eps,
		$$
		and therefore~$m(\lambda)$ is locally Lipschitz in~$\mathbb{R}_+$.
		
		Also note that, in the case~$\lambda = 0$ in~\eqref{inf of G }, by inputing~$\phi(t)\equiv 1$, we find~$m(0) = 0$. In particular, $m(\lambda)$ is a measurable function in~$[0,+\infty)$.
	\end{remark}
	
	\begin{remark}\label{bound on derivative of g}
	A consequence of Theorem~\ref{Minimization Problem} is that,
		for all~$t\in \mathbb{R}_+$,
		\begin{equation}\label{fyuiwoqvbcnx65748398765vbn}
			\left|a(t) \partial_tg(\lambda,t)\right|\leq m(\lambda).\end{equation}
		Indeed, 
		from the properties of~$g(\lambda,\cdot)$ in Theorem~\ref{Minimization Problem}, we see that~$g(\lambda,t)\in[0,1]$ for all~$t\in\R_+$.
		Thus,
		using the equation in~\eqref{Fourier transform of eq for kernel},
		$$
		-\frac{\partial}{\partial t}\big(a(t)\partial_t g(\lambda,t)\big)
		=-\big(a'(t)\partial_tg(\lambda,t)+a(t)\partial^2_tg(\lambda,t)\big)
		= -\lambda a(t)g(\lambda,t)\le0.
		$$
		Therefore, 
		$$
		\left|a(t) \partial_tg(\lambda,t)\right|= -a(t)\partial_tg(\lambda,t) \leq 
		-\displaystyle\lim_{t\to 0^+}a(t)\partial_tg(\lambda,t)= m(\lambda),
		$$ which establishes~\eqref{fyuiwoqvbcnx65748398765vbn}.
	\end{remark}
	
	We now present a technical, but useful result, which is a density argument of classical flavor, and whose detailed proof is contained in Appendix~\ref{gyu4io3vbcnx76859430}.
	
	\begin{lemma}\label{Density of Schwartz} We have that~$\mathcal{S}(\mathbb{R}^n)\subset H_a(\mathbb{R}^n)$ and,
		for any~$v \in H_a(\mathbb{R}^n)$, there exists a sequence~$v_j \in \mathcal{S}(\mathbb{R}^n)$ such that 
		\begin{equation}\label{4358rufinittesalimit}
			\lim_{j\to+\infty}[v-v_j]_{H_a(\R^n)}= 0.
		\end{equation}
	\end{lemma}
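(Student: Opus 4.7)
The plan is to prove both assertions by exploiting the fact that $[\cdot]_{H_a(\mathbb{R}^n)}$ is a weighted $L^2$ seminorm on the Fourier side. First I would obtain a polynomial growth bound on $m$, from which the inclusion $\mathcal{S}(\mathbb{R}^n) \subset H_a(\mathbb{R}^n)$ follows at once; then I would use a standard density argument in a weighted $L^2$ space to approximate arbitrary $v \in H_a(\mathbb{R}^n)$ by Schwartz functions.

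For the growth bound, I would test the variational problem~\eqref{inf of G } against a single fixed competitor that is admissible for every~$\lambda$. Fix any~$\eta \in C^\infty([0,+\infty))$ with~$\eta(0) = 1$ and~$\operatorname{supp}\eta \subseteq [0,1]$. Since~$a \in L^1_{\text{loc}}([0,+\infty))$, we have~$\eta \in H^1_1(\mathbb{R}_+,a)$ and
$$ m(\lambda) \leq G_\lambda(\eta) = \lambda \int_0^1 a(t)|\eta(t)|^2\,dt + \int_0^1 a(t)|\eta'(t)|^2\,dt \leq C_\eta(1+\lambda), $$
with~$C_\eta$ independent of~$\lambda$. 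For~$\phi \in \mathcal{S}(\mathbb{R}^n)$, the Fourier transform~$\hat\phi$ is again Schwartz, so
$$ [\phi]^2_{H_a(\mathbb{R}^n)} \leq C_\eta \int_{\mathbb{R}^n}(1+|\xi|^2)|\hat\phi(\xi)|^2\,d\xi < +\infty, $$
which yields~$\phi \in H_a(\mathbb{R}^n)$.

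For the density, I would observe that, by definition, $v \in H_a(\mathbb{R}^n)$ means exactly that~$\hat v \in L^2(\mathbb{R}^n, m(|\xi|^2)\,d\xi)$. By Remark~\ref{m is a measurable function}, $m$ is non-negative, locally Lipschitz, and vanishes at~$0$, hence locally bounded; therefore~$m(|\xi|^2)\,d\xi$ is a $\sigma$-finite Radon measure on~$\mathbb{R}^n$, and~$C_c^\infty(\mathbb{R}^n)$ is dense in~$L^2(\mathbb{R}^n, m(|\xi|^2)\,d\xi)$ by the standard combination of truncation, Lusin's theorem, and mollification. I would then pick a sequence~$\psi_j \in C_c^\infty(\mathbb{R}^n)$ with~$\psi_j \to \hat v$ in this weighted~$L^2$ and set~$v_j := \mathcal{F}^{-1}\psi_j \in \mathcal{S}(\mathbb{R}^n)$; since~$\widehat{v_j} = \psi_j$, it follows that
$$ [v-v_j]^2_{H_a(\mathbb{R}^n)} = \int_{\mathbb{R}^n}m(|\xi|^2)|\hat v(\xi) - \psi_j(\xi)|^2\,d\xi \longrightarrow 0, $$
establishing~\eqref{4358rufinittesalimit}.

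I do not expect a serious obstacle: the polynomial growth of~$m$ uses only~$a \in L^1_{\text{loc}}$, and density in weighted~$L^2$ is a textbook fact once local boundedness of the weight is in hand. The mild point to watch is that the weight $m(|\xi|^2)$ vanishes at the origin, but since it remains locally bounded this causes no issue in the $L^2$ approximation.
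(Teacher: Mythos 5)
Your proposal is correct, and both halves take a slightly different route from the paper. For the inclusion $\mathcal{S}(\mathbb{R}^n)\subset H_a(\mathbb{R}^n)$, you obtain the linear growth bound $m(\lambda)\le C(1+\lambda)$ directly by inserting a fixed cutoff competitor into the variational problem~\eqref{inf of G }; the paper instead obtains the same conclusion in the form $m(|\xi|^2)\le m(1)(1+|\xi|^2)$ as a byproduct of the incremental estimate $m(\lambda+\eps)\le m(\lambda)(1+\eps/\lambda)$ from Remark~\ref{m is a measurable function}. The two bounds are interchangeable here; your competitor argument has the mild virtue of not needing the existence of a minimizer, only the infimum. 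For the density, you treat $H_a(\mathbb{R}^n)$ abstractly as $L^2\bigl(\mathbb{R}^n,\,m(|\xi|^2)\,d\xi\bigr)$ on the Fourier side and invoke density of $C_c^\infty$ in $L^2(\mu)$ for a $\sigma$-finite Radon measure (truncation, Lusin, mollification), which works because $m(|\xi|^2)$ is continuous and locally bounded. The paper does the same thing by hand: it chooses $\varepsilon$ small and $R$ large so that the contributions from $B_\varepsilon$ and $\mathbb{R}^n\setminus B_R$ to $[v]_{H_a}^2$ are small, observes that on the annulus $B_R\setminus B_\varepsilon$ the weight is bounded above and below so that $\hat v\in L^2(B_R\setminus B_\varepsilon)$ with the unweighted norm, and then approximates there by $\hat\varphi\in C_c^\infty(B_R\setminus B_\varepsilon)$. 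The annular decomposition buys a more self-contained proof that only uses the elementary density of $C_c^\infty$ in $L^2$ of a bounded set and explicit upper/lower bounds on $m$ on the annulus, avoiding any appeal to Lusin's theorem; your route is shorter but rests on a heavier textbook fact. One cosmetic slip: local boundedness of $m(|\xi|^2)$ follows from local Lipschitz continuity of $m$, not from $m(0)=0$, though you clearly have the right conclusion.
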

	
	
	With the work done so far, we now show that the expression in~\eqref{Sol of Prob with Neumann} is well-defined as an element of~$\mathcal{S}'(\mathbb{R}^{n+1})$, as stated in the following result:
	
	\begin{lemma}\label{Defining the Distribution}
		For every~$v \in H_a(\mathbb{R}^n)$, we have that
		$$\mathcal{F}_x^{-1}\left(\tilde{g}(|\xi|^2,t)\hat{v}\right)\in
		\mathcal{S}'(\mathbb{R}^{n+1}),$$
		where~$\tilde{g}(|\xi|^2,\cdot)$ is the even extension to~$t<0$ of the function~$g(|\xi|^2,\cdot)$ given by Theorem~\ref{Minimization Problem}. 
	\end{lemma}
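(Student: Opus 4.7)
The plan is to show that $\tilde{g}(|\xi|^2,t)\hat{v}(\xi)$ itself defines a tempered distribution on $\mathbb{R}^{n+1}$; once this is done, the claim that $\mathcal{F}_x^{-1}(\tilde{g}(|\xi|^2,t)\hat{v})\in\mathcal{S}'(\mathbb{R}^{n+1})$ follows immediately, since $\mathcal{F}_x^{-1}$ preserves $\mathcal{S}'(\mathbb{R}^{n+1})$ (by the observation, recorded in the paragraph introducing $\mathcal{F}_x$, that the partial Fourier transform sends $\mathcal{S}(\mathbb{R}^{n+1})$ into itself). The two inputs I would use are: (a)~the pointwise bound $0\le\tilde{g}(|\xi|^2,t)\le 1$ on all of $\mathbb{R}^{n+1}$, read off from items~(2) and~(3) of Theorem~\ref{Minimization Problem} together with $g(\lambda,0)=1$ (a continuously differentiable, non-increasing function going from $1$ to~$0$ takes values in $[0,1]$), and preserved by the even extension in $t$; and (b)~the fact that $\hat{v}$, being simultaneously a measurable function and a tempered distribution on $\mathbb{R}^n$, must lie in $L^1_{\mathrm{loc}}(\mathbb{R}^n)$ (test against compactly supported smooth bumps).

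To check that the pairing $\chi\mapsto \int_{\mathbb{R}^{n+1}}\tilde{g}(|\xi|^2,t)\hat{v}(\xi)\chi(\xi,t)\,d\xi\,dt$ is absolutely convergent and continuous on $\mathcal{S}(\mathbb{R}^{n+1})$, I would split the $\xi$-integration at $|\xi|=1$. For $|\xi|\ge 1$, the monotonicity of $m$ recorded in Remark~\ref{m is a measurable function} yields $m(|\xi|^2)\ge m(1)>0$ (positivity of $m(1)$ follows from $g(1,0)=1$ and the variational characterization), so Cauchy--Schwarz in~$\xi$ together with $\tilde g\le 1$ gives
\[
\int_{\mathbb{R}}\int_{|\xi|\ge 1}|\tilde{g}\,\hat{v}\,\chi|\,d\xi\,dt\le \frac{[v]_{H_a(\mathbb{R}^n)}}{\sqrt{m(1)}}\int_{\mathbb{R}}\|\chi(\cdot,t)\|_{L^2(\mathbb{R}^n)}\,dt,
\]
and the $t$-integral is finite because $\|\chi(\cdot,t)\|_{L^2(\mathbb{R}^n)}$ decays faster than any polynomial in~$|t|$. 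For $|\xi|<1$, local integrability of $\hat{v}$ together with $\tilde{g}\le 1$ yields
\[
\int_{\mathbb{R}}\int_{|\xi|<1}|\tilde{g}\,\hat{v}\,\chi|\,d\xi\,dt\le \|\hat{v}\|_{L^1(B_1)}\int_{\mathbb{R}}\sup_{\xi\in\mathbb{R}^n}|\chi(\xi,t)|\,dt,
\]
again finite by Schwartz decay in~$t$. Both bounds depend on at most finitely many Schwartz seminorms of~$\chi$, so $\chi\mapsto \int\tilde g\hat v\chi$ is a continuous linear functional on $\mathcal{S}(\mathbb{R}^{n+1})$, i.e.\ $\tilde{g}(|\xi|^2,t)\hat{v}\in\mathcal{S}'(\mathbb{R}^{n+1})$, and applying $\mathcal{F}_x^{-1}$ delivers the conclusion.

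The real subtlety---and essentially the only obstacle---lies near $\xi=0$, where $m(|\xi|^2)\to 0$ and so the $H_a$-seminorm provides no control on $|\hat{v}|$. The splitting at $|\xi|=1$ is precisely what defers the neighbourhood of the origin to a purely local-integrability argument, which is available only because the definition of $H_a(\mathbb{R}^n)$ requires $\hat v$ to be a measurable function (and not merely a distribution), forcing $\hat v\in L^1_{\mathrm{loc}}$.
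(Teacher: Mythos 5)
Your proof is correct, and the route is genuinely different from the paper's. The paper uses the additive decomposition $\tilde{g}\hat{v}\psi = (\tilde{g}-1)\hat{v}\psi + \hat{v}\psi$: for the first summand it invokes the quantitative bound $|\tilde{g}(|\xi|^2,t)-1| \le C\,m(|\xi|^2)(1+|t|^2)$ (obtained from the fundamental theorem of calculus via the estimate $|a(t)\partial_t g|\le m(|\xi|^2)$ of Remark~\ref{bound on derivative of g} and the Muckenhoupt growth bound of Lemma~\ref{Growth bound on Muckenhoupt}), which vanishes at $\xi=0$ at exactly the rate that cancels against the degeneracy of the $H_a$-seminorm, so Cauchy--Schwarz in $\xi$ is globally applicable; the second summand reduces, after integrating out $t$, to the pairing of $\hat{v}\in\mathcal{S}'(\mathbb{R}^n)$ against a Schwartz function on $\mathbb{R}^n$. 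You instead keep $\tilde{g}$ whole, use only $0\le\tilde{g}\le 1$, and split the $\xi$-integral at $|\xi|=1$. This trades one technical ingredient for another: you bypass the growth estimate on $\tilde{g}-1$ (and hence Lemma~\ref{Growth bound on Muckenhoupt}), but you take on the explicit assertion $\hat{v}\in L^1_{\mathrm{loc}}(\mathbb{R}^n)$ to handle the low-frequency region. That assertion is in fact harmless: the paper also writes $\hat{v}(\xi)$ as an a.e.-defined integrand and applies Fubini, so it is implicitly treating $\hat{v}$ as a locally integrable function (which is the intended reading of ``$\hat{u}$ is a measurable function'' in the definition of $H_a$ --- a tempered distribution represented by integration against a function). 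In short, both proofs are sound; yours is a little more elementary in its dependence on $\tilde{g}$ (the crude $[0,1]$ bound suffices) at the price of making the $L^1_{\mathrm{loc}}$ property of $\hat{v}$ load-bearing rather than tacit.
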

	
	\begin{proof}
		Let~$v\in H_a(\mathbb{R}^n)$.
		We define,
		for all~$\varphi\in{\mathcal{S}}(\mathbb{R}^{n+1})$,
		\begin{eqnarray*} \langle \mathcal{F}_x^{-1}\left(\tilde{g}(|\xi|^2,t)\hat{v}\right)\,,\varphi\rangle_{\mathcal{S}(\mathbb{R}^{n+1})}&:=&
			\langle \tilde{g}(|\xi|^2,t)\hat{v}\,,\mathcal{F}_x^{-1}\varphi\rangle_{\mathcal{S}(\mathbb{R}^{n+1})}\\
			&:=&\int_{\mathbb{R}^{n+1}}\tilde{g}(|\xi|^2,t)\,\hat{v}(\xi)\,
			\mathcal{F}_x^{-1}\varphi(\xi,t)\,d\xi\,dt.
		\end{eqnarray*}
		With this setting, in order
		to establish Lemma~\ref{Defining the Distribution}, we need to check that
		if~$\psi_k$ converges to~$\psi$ in~${\mathcal{S}}(\mathbb{R}^{n+1})$ then
		\begin{equation}\label{ftydusivfebdwjsq4532617845tyergfhkjskh00}
			\lim_{k\to+\infty}
			\langle \tilde{g}(|\xi|^2,t)\hat{v}\,,\psi_k-\psi\rangle_{\mathcal{S}(\mathbb{R}^{n+1})}
			=0.\end{equation}
		To this end, we observe that
		\begin{equation}\label{ftydusivfebdwjsq4532617845tyergfhkjskh0}\begin{split}
				&\langle \tilde{g}(|\xi|^2,t)\hat{v}\,,\psi_k-\psi\rangle_{\mathcal{S}(\mathbb{R}^{n+1})}
				\\=\;&
				\int_{\mathbb{R}^{n+1}}\tilde{g}(|\xi|^2,t)\,\hat{v}(\xi)\big(\psi_k(\xi,t)-\psi(\xi,t)\big)\,d\xi\,dt\\=\;&
				\int_{\mathbb{R}^{n+1}}
				\big(\tilde{g}(|\xi|^2,t)-1\big)\hat{v}(\xi)\big(\psi_k(\xi,t)-\psi(\xi,t)\big)\,d\xi\,dt
				\\&\qquad +\int_{\mathbb{R}^{n+1}}\hat{v}(\xi)\big(\psi_k(\xi,t)-\psi(\xi,t)\big)\,d\xi\,dt.
			\end{split}
		\end{equation}
		
		We claim that
		\begin{equation}\label{ftydusivfebdwjsq4532617845tyergfhkjskh}
			\lim_{k\to+\infty}
			\int_{\mathbb{R}^{n+1}}
			\big(\tilde{g}(|\xi|^2,t)-1\big)\hat{v}(\xi)\big(\psi_k(\xi,t)-\psi(\xi,t)\big)\,d\xi\,dt=0.
		\end{equation}
		To prove this claim, we observe that,
		thanks to formula~\eqref{fyuiwoqvbcnx65748398765vbn}
		in Remark~\ref{bound on derivative of g},
		for all~$t\in\R$,
		\begin{equation}\label{servdopo0854oi0987}
			\left|\tilde{g}(|\xi|^2,t)-1\right|\le \int_0^{|t|}|\partial_\sigma\tilde{g}(|\xi|^2,\sigma)|\,d\sigma\le m(|\xi|^2)\int_0^{|t|}\frac{d\sigma}{a(\sigma)}.
		\end{equation}
		Thus, Lemma~\ref{Growth bound on Muckenhoupt} gives that
		\begin{eqnarray*}
			\left|\tilde{g}(|\xi|^2,t)-1\right|\le C\,m(|\xi|^2)(1+|t|^2).
		\end{eqnarray*}
		As a consequence of this and the H\"older inequality,
		\begin{equation*}
			\begin{split}
				\left|\int_{\mathbb{R}^{n+1}} \right. & \left.
				\big(\tilde{g}  (|\xi|^2,t)-1\big)\hat{v}(\xi)\big(\psi_k(\xi,t)-\psi(\xi,t)\big)\,d\xi\,dt\right|
				\\
				\leq\;&
				\int_{\mathbb{R}^{n+1}}\left|\tilde{g}(|\xi|^2,t)-1\right||\hat{v}(\xi)|\,
				\big|\psi_k(\xi,t)-\psi(\xi,t)\big|\,d\xi\,dt\\
				\leq\;& C \int_{\mathbb{R}^{n+1}}m(|\xi|^2)(1+|t|^2)|\hat{v}(\xi)|\,
				\big|\psi_k(\xi,t)-\psi(\xi,t)\big|\,d\xi\,dt \\
				=\;&C\int_{\mathbb{R}}\frac1{(1+|t|^2)}\left(
				\int_{\mathbb{R}^{n}}m(|\xi|^2) (1+|t|^2)^2|\hat{v}(\xi)|\,\big|\psi_k(\xi,t)-\psi(\xi,t)\big|\,d\xi\right)\,dt\\
				\leq\;& C\int_{\mathbb{R}}\Bigg[\frac1{(1+|t|^2)}\left(
				\int_{\mathbb{R}^{n}} m(|\xi|^2)|\hat{v}(\xi)|^2\,d\xi \right)^{1/2}
				\\&\qquad\times
				\left(\int_{\mathbb{R}^{n}}
				m(|\xi|^2)(1+|t|^2)^4\big|\psi_k(\xi,t)-\psi(\xi,t)\big|^2\,d\xi \right)^{1/2}\Bigg]\,dt
				\\
				=\;&C [v]_{H_a(\mathbb{R}^n)}\int_{\mathbb{R}}\frac{1}{(1+|t|^2)}\left(\int_{\mathbb{R}^{n}}m(|\xi|^2)(1+|t|^{2})^4\big|\psi_k(\xi,t)-\psi(\xi,t)\big|^2\,d\xi\right)^{1/2}\,dt.
			\end{split}
		\end{equation*}
		
		Notice that~$m(|\xi|^2)\le m(1)(1+|\xi|^2)$, and therefore
		\begin{equation*}
			\begin{split}&
				\left|\int_{\mathbb{R}^{n+1}}
				\big(\tilde{g}(|\xi|^2,t)-1\big)\hat{v}(\xi)\big(\psi_k(\xi,t)-\psi(\xi,t)\big)\,d\xi\,dt\right|\\
				\leq\;& C m(1) [v]_{H_a(\mathbb{R}^n)}\int_{\mathbb{R}}\frac{1}{1+|t|^2}\left(\int_{\mathbb{R}^{n}}(1+|\xi|^2)(1+|t|^{2})^4\big|\psi_k(\xi,t)-\psi(\xi,t)\big|^2\,d\xi\right)^{1/2}\,dt.
			\end{split}
		\end{equation*}
		Since~$\psi_k$ converges to~$\psi$ in~$\mathcal{S}(\mathbb{R}^{n+1})$, this formula
		gives the desired limit in~\eqref{ftydusivfebdwjsq4532617845tyergfhkjskh}.
		
		We now show that
		\begin{equation}
			\label{ftydusivfebdwjsq4532617845tyergfhkjskh2}
			\lim_{k\to+\infty}\int_{\mathbb{R}^{n+1}}\hat{v}(\xi)\big(\psi_k(\xi,t)-\psi(\xi,t)\big)\,d\xi\,dt=0.
		\end{equation}
		To this end, note that since~$\psi$, $\psi_k\in\mathcal{S}(\mathbb{R}^{n+1})$, we have that 
\begin{eqnarray*}
		\Psi(x) &:=&\int_{\mathbb{R}}\psi(x,t)\,dt\in \mathcal{S}(\mathbb{R}^{n}) 
		\\{\mbox{and }}\quad 
		\Psi_k(x) &:=&\int_{\mathbb{R}}\psi_k(x,t)\,dt\in \mathcal{S}(\mathbb{R}^{n}) .
		\end{eqnarray*}
		Furthermore, the convergence of~$\psi_k$ to~$\psi$ in~$\mathcal{S}(\mathbb{R}^{n+1})$ implies the convergence of~$\Psi_k$ to~$\Psi$ in~$\mathcal{S}(\mathbb{R}^{n})$. 
		Hence, the fact that $\hat{v}\in \mathcal{S}'(\mathbb{R}^n)$ leads to
		\begin{eqnarray*}
		&&
			\lim_{k\to+\infty}\int_{\mathbb{R}^{n+1}}\hat{v}(\xi)\big(\psi_k(\xi,t)-\psi(\xi,t)\big)\,d\xi\,dt =
			\lim_{k\to+\infty}\int_{\mathbb{R}^{n}}\hat{v}(\xi)\left(\int_{\R}\big(\psi_k(\xi,t)-\psi(\xi,t)\big)\,dt\right)\,d\xi \\&&\qquad\qquad=
			\lim_{k\to+\infty}\int_{\mathbb{R}^{n}}\hat{v}(\xi)\big(\Psi_k(\xi)-\Psi(\xi)\big)\,d\xi = 0,
		\end{eqnarray*}
		which establishes~\eqref{ftydusivfebdwjsq4532617845tyergfhkjskh2}
		
{F}rom~\eqref{ftydusivfebdwjsq4532617845tyergfhkjskh0},
		\eqref{ftydusivfebdwjsq4532617845tyergfhkjskh}
		and~\eqref{ftydusivfebdwjsq4532617845tyergfhkjskh2},
		we obtain the desired result in~\eqref{ftydusivfebdwjsq4532617845tyergfhkjskh00}.
	\end{proof}
	
	Having shown that~\eqref{Sol of Prob with Neumann} is a well-defined tempered distribution, we now prove that it also is an element of~$X(\mathbb{R}^{n+1}_+, a)$:
	
	\begin{lemma}\label{Gradient of U}
		Let~$v\in H_a(\mathbb{R}^n)$ and~$V:= \mathcal{F}_{x}^{-1}\left(\tilde{g}(|\xi|^2,t)\hat{v}\right)\in \mathcal{S}'(\mathbb{R}^{n+1})$.
		
		Then,
		\begin{enumerate}[label=(\subscript{P}{{\arabic*}})]
			\item The distributional gradient of~$V$ is
			\begin{equation*}
				\nabla V = (\mathcal{F}_x^{-1}(i\xi \tilde{g}(|\xi|^2,t)\hat{v}), \mathcal{F}_x^{-1}(\partial_t \tilde{g}(|\xi|^2,t)\hat{v}));
			\end{equation*}
			\item $\nabla V \in L^2_{\text{loc}}({\mathbb{R}}^{n+1}_+)$;
			\item $V$ can be identified with a measurable function and~$V\in L^{2}_{\text{loc}}({\mathbb{R}}^{n+1}_+)$;
			\item $V$ has finite energy, i.e.,
			\begin{equation*}
				\int_{{\mathbb{R}}^{n+1}_+}a(t)|\nabla V|^2 \,dx\, dt<+\infty,
			\end{equation*}
			and consequently~$V\in X(\mathbb{R}^{n+1}_+,a)$.
		\end{enumerate}
	\end{lemma}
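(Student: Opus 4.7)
The plan is to prove the four properties in the natural order (P1), (P4), (P2), (P3). The core device is Plancherel's identity in the~$x$-variable, combined with the variational characterization~$m(|\xi|^2)=G_{|\xi|^2}(g(|\xi|^2,\cdot))$ from Theorem~\ref{Minimization Problem}, which exactly converts the weighted gradient energy of~$V$ on~$\mathbb{R}^{n+1}_+$ into the seminorm~$[v]^2_{H_a(\mathbb{R}^n)}$.

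For (P1), I would start from the pairing
\begin{equation*}
\langle V,\varphi\rangle_{\mathcal{S}(\mathbb{R}^{n+1})} = \int_{\mathbb{R}^{n+1}}\tilde{g}(|\xi|^2,t)\,\hat{v}(\xi)\,\mathcal{F}_x^{-1}\varphi(\xi,t)\,d\xi\,dt
\end{equation*}
supplied by Lemma~\ref{Defining the Distribution}. The elementary identity~$\mathcal{F}_x^{-1}(\partial_{x_j}\varphi) = -i\xi_j\mathcal{F}_x^{-1}\varphi$ immediately yields~$\partial_{x_j}V = \mathcal{F}_x^{-1}(i\xi_j\tilde{g}\hat v)$, once one checks that the right-hand side is a tempered distribution: this is done by repeating verbatim the argument of Lemma~\ref{Defining the Distribution}, with the extra~$|\xi|$ absorbed through~$m(|\xi|^2)\leq m(1)(1+|\xi|^2)$. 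The~$t$-derivative is more delicate. Since~$g(|\xi|^2,0)=1$, the even extension~$\tilde g(|\xi|^2,\cdot)$ is continuous across~$t=0$, and its distributional~$t$-derivative is the locally integrable function~$\mathrm{sgn}(t)\,\partial_t g(|\xi|^2,|t|)$ (local integrability near~$0$ coming from the pointwise bound~$|\partial_t g|\leq m(|\xi|^2)/a$ in Remark~\ref{bound on derivative of g}, since~$1/a$ is again an~$A_2$-weight, hence locally integrable). Fubini in~$(\xi,t)$ (justified by Schwartz decay of~$\mathcal{F}_x^{-1}\varphi$) and integration by parts in~$t$ for each fixed~$\xi$ then give~$\partial_tV = \mathcal{F}_x^{-1}(\partial_t\tilde g\,\hat v)$, with no boundary term at~$t=0$ by continuity, and none at~$\pm\infty$ by Schwartz decay. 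Well-definedness of~$\mathcal{F}_x^{-1}(\partial_t\tilde g\,\hat v)$ as a tempered distribution follows from Remark~\ref{bound on derivative of g} and Lemma~\ref{Growth bound on Muckenhoupt}, in the spirit of Lemma~\ref{Defining the Distribution}.

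For (P4), Plancherel in~$x$ (applied to the formulas from (P1)) together with Fubini on the non-negative integrand produce
\begin{equation*}
\int_{\mathbb{R}^{n+1}_+}a(t)|\nabla V|^2\,dx\,dt = \frac{1}{(2\pi)^n}\int_{\mathbb{R}^n}|\hat v(\xi)|^2\int_{\mathbb{R}_+}a(t)\big(|\xi|^2\,g(|\xi|^2,t)^2+|\partial_t g(|\xi|^2,t)|^2\big)\,dt\,d\xi.
\end{equation*}
By part~(4) of Theorem~\ref{Minimization Problem}, the inner integral equals~$G_{|\xi|^2}(g(|\xi|^2,\cdot))=m(|\xi|^2)$, so the whole expression collapses to~$(2\pi)^{-n}[v]_{H_a(\mathbb{R}^n)}^2<+\infty$. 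In particular, the distributional~$\nabla V$ is in fact a function with finite weighted~$L^2$-norm, which gives (P4). Then (P2) is immediate: on any compact~$K\subset\mathbb{R}^{n+1}_+$ the weight~$a$ is bounded below by a positive constant (by continuity and positivity on~$(0,+\infty)$), so~$\int_K|\nabla V|^2\,dx\,dt\le c_K^{-1}\int_K a(t)|\nabla V|^2\,dx\,dt<+\infty$.

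For (P3), since~$V$ is a tempered distribution whose distributional gradient is in~$L^2_{\mathrm{loc}}(\mathbb{R}^{n+1}_+)$ by (P2), a standard argument (mollification on relatively compact open subsets, or the weighted Poincar\'e inequality of~\cite{MR643158} cited in the excerpt, which yields~$X(\mathbb{R}^{n+1}_+,a)\subseteq H^1_{\mathrm{loc}}(\mathbb{R}^{n+1}_+,a)$) identifies~$V$ with a function in~$W^{1,2}_{\mathrm{loc}}(\mathbb{R}^{n+1}_+)$, and in particular~$V\in L^2_{\mathrm{loc}}(\mathbb{R}^{n+1}_+)$. The main obstacle lies in (P1): verifying that the formal manipulations are legitimate at the purely distributional level, since a priori~$V$, $\hat v$ and~$\partial_t\tilde g$ are not functions one may integrate against Schwartz test functions directly. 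The crucial enablers are the continuity of~$\tilde g(|\xi|^2,\cdot)$ across~$t=0$, which removes every boundary contribution from the~$t$-integration by parts, together with the Muckenhoupt growth control of Lemma~\ref{Growth bound on Muckenhoupt} and the pointwise derivative bound of Remark~\ref{bound on derivative of g}, which together control the candidate distributions against Schwartz test functions in the spirit of Lemma~\ref{Defining the Distribution}.
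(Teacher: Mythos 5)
Your proposal is correct and follows essentially the same route as the paper: derive $(P_1)$ by pairing against Schwartz functions and integrating by parts in $t$, compute the weighted energy by Plancherel and the variational identity $G_{|\xi|^2}(g(|\xi|^2,\cdot))=m(|\xi|^2)$ to get finiteness, then pass to local $L^2$ bounds on $\nabla V$ using positivity and continuity of $a$, and finally identify $V$ with an $L^2_{\mathrm{loc}}$ function. Two small remarks: the identity $G_{|\xi|^2}(g(|\xi|^2,\cdot))=m(|\xi|^2)$ is the defining statement of Theorem~\ref{Minimization Problem}, not its item~(4) (item~(4) is the limit of $a(t)\partial_t g$); and for~$(P_3)$, invoking the weighted Poincar\'e inequality directly is slightly circular since it presupposes $V\in W^{1,1}_{\mathrm{loc}}$ --- the paper instead appeals to the Deny--Lions result (Corollary~2.1 in~\cite{MR0074787}), which is exactly the right tool to pass from a tempered distribution with $L^2_{\mathrm{loc}}$ gradient to an $L^2_{\mathrm{loc}}$ function; your mollification fallback achieves the same conclusion.
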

	
	\begin{proof}
		We start by showing~$(P_1)$. Let~$\psi \in\mathcal{S}({\mathbb{R}}^{n+1})$
		and notice that, for all~$i \in \{1,\dots,n\}$,
		\begin{equation*}
			\begin{split}
				&   \left\langle \partial_{x_i}V, \psi \right\rangle_{\mathcal{S}(\mathbb{R}^{n+1})} 
				= -\left\langle V, \partial_{x_i}\psi \right\rangle_{\mathcal{S}(\mathbb{R}^{n+1})} = \left\langle \tilde{g}(|\xi|^2,t)\hat{v}, i\xi_i\mathcal{F}_{\xi}^{-1}\psi \right\rangle_{\mathcal{S}(\mathbb{R}^{n+1})}\\
				&\qquad\qquad= \left\langle \mathcal{F}_x^{-1}\left(i\xi_i\tilde{g}(|\xi|^2,t)\hat{v}\right), \psi\right\rangle_{\mathcal{S}(\mathbb{R}^{n+1})}.
			\end{split}
		\end{equation*}
		Moreover, by definition,
		\begin{eqnarray*}
			&&       \left\langle \partial_{t}V, \psi \right\rangle_{\mathcal{S}(\mathbb{R}^{n+1})}= -\left\langle V, \partial_{t}\psi \right\rangle_{\mathcal{S}(\mathbb{R}^{n+1})}
			= -\left\langle \tilde{g}(|\xi|^2,t)\hat{v}, \partial_{t}\mathcal{F}_{\xi}^{-1}\psi \right\rangle_{\mathcal{S}(\mathbb{R}^{n+1})} \\
			\\&&\qquad \qquad= -\int_{{\mathbb{R}}^n}\left(
			\hat{v}(\xi) \int_{\mathbb{R}} \tilde{g}(|\xi|^2,t)\partial_t \mathcal{F}_{\xi}^{-1} \psi(\xi,t) \,dt\right)\,d\xi.\end{eqnarray*}
		Thus, integrating by parts in~$t$
		(and recalling the decay properties of~$\tilde{g}$ at infinity, as given in~$(3)$ of Theorem~\ref{Minimization Problem}), we find that
		$$  \left\langle \partial_{t}V, \psi \right\rangle_{\mathcal{S}(\mathbb{R}^{n+1})}
		=\int_{{\mathbb{R}}^{n+1}}\hat{v}(\xi)\partial_t \tilde{g}(|\xi|^2,t) \mathcal{F}_{\xi}^{-1}\psi (\xi,t)\,d\xi\, dt
		= \left\langle \mathcal{F}_{x}^{-1}(\partial_t \tilde{g}(|\xi|^2,t) \hat{v}), \psi \right\rangle_{\mathcal{S}(\mathbb{R}^{n+1})}.
		$$ 
		The proof of~$(P_1)$ is thereby complete. 
		
		We now check~$(P_2)$. For this, we observe that, applying the Plancherel identity for a given~$t\in\mathbb{R}_+$,
		\begin{equation*}
			\begin{split}
				&\int_{{\mathbb{R}}^{n+1}_+}a(t) |\nabla V(x,t)|^2\, dx\, dt
				= \int_{\mathbb{R}_+}\left(a(t) \int_{{\mathbb{R}}^n} \big(
				|\xi|^2 |g(|\xi|^2,t)|^2+ |\partial_t g(|\xi|^2,t)|^2\big) |\hat{v}(\xi)|^2\,d\xi \right)\,dt \\
				&\qquad\qquad= \int_{{\mathbb{R}}^n}m(|\xi|^2)|\hat{v}(\xi)|^2 \,d\xi<+\infty. 
			\end{split}
		\end{equation*}
		Since~$a$ is continuous and strictly positive, we infer from this computation that~$\nabla V\in L^{2}_{\text{loc}}({\mathbb{R}}^{n+1}_+)$, which establishes~$(P_2)$.
		
		Then, by Corollary~2.1 in~\cite{MR0074787} we conclude that~$V$ can be identified with a measurable function~$V \in L^2_{\text{loc}}({\mathbb{R}}^{n+1}_+)$,
		which gives~$(P_3)$. Therefore,
		we conclude that~$V \in X(\mathbb{R}^{n+1}_+,a)$, which completes the proof of~$(P_4)$.
	\end{proof}
	
	To prove Theorem~\ref{Convolution enters the chat} we also
	need to show that~\eqref{Sol of Prob with Neumann} satisfies the boundary condition
	in~\eqref{Prob with Neumann}. To this end, we study the asymptotic behaviour of~$V:= \mathcal{F}_x^{-1}(\tilde{g}(|\xi|^2,t)\hat{v})$, when~$v \in H_a({\mathbb{R}}^n)$. The result needed for our purposes goes as follows:
	
	\begin{lemma}\label{Asymptotic Properties}
		Let~$v\in H_a({\mathbb{R}}^n)$ and~$V(x,t) := \mathcal{F}_x^{-1}(\tilde{g}(|\xi|^2,t)\hat{v})\in \mathcal{S}'(\mathbb{R}^{n+1})$. 
		
		Then,
		\begin{enumerate}[label=(\subscript{P}{{\arabic*}})]\addtocounter{enumi}{4}
			\item $\displaystyle\lim_{t\to 0^+}\|V(\cdot,t) - v(\cdot)\|_{L^2(\mathbb{R}^n)} = 0$;
			\item $\displaystyle\lim_{t\to 0^+} a(t) \partial_t V = -\mathcal{F}_x^{-1}(m(|\xi|^2)\hat{v}(\xi))$ in~$\mathcal{S}'(\mathbb{R}^{n+1})$.
		\end{enumerate}
	\end{lemma}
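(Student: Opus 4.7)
The plan is to pass to the Fourier side in the $x$-variable and reduce both statements to applications of the Dominated Convergence Theorem, exploiting the uniform bounds on $\tilde{g}(|\xi|^2,t)$ and $a(t)\partial_t\tilde{g}(|\xi|^2,t)$ supplied by Theorem~\ref{Minimization Problem} and Remark~\ref{bound on derivative of g}. The hypothesis $v\in H_a(\mathbb{R}^n)$ will provide, in each case, a dominating function that is integrable uniformly for small $t$.

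For $(P_5)$, I would first note that $\tilde g(|\xi|^2,t)\in[0,1]$ by items $(2)$--$(3)$ of Theorem~\ref{Minimization Problem} (together with $g(\lambda,0)=1$), so $|\tilde g(|\xi|^2,t)-1|\le 1$; combined with the estimate $|\tilde g(|\xi|^2,t)-1|\le C\,m(|\xi|^2)(1+t^2)$ from~\eqref{servdopo0854oi0987} and Lemma~\ref{Growth bound on Muckenhoupt}, this gives
\begin{equation*}
|\tilde g(|\xi|^2,t)-1|^2\,|\hat v(\xi)|^2\le C\,m(|\xi|^2)|\hat v(\xi)|^2\qquad \text{for } t\in(0,1],
\end{equation*}
with an integrable majorant since $v\in H_a(\mathbb{R}^n)$. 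Hence $(\tilde g(|\xi|^2,t)-1)\hat v\in L^2(\mathbb{R}^n)$ uniformly in small $t$, and by Plancherel,
\begin{equation*}
\|V(\cdot,t)-v\|_{L^2(\mathbb{R}^n)}^2=(2\pi)^{-n}\int_{\mathbb{R}^n}|\tilde g(|\xi|^2,t)-1|^2\,|\hat v(\xi)|^2\,d\xi.
\end{equation*}
The integrand converges pointwise to $0$ as $t\to 0^+$ by continuity of $g(\lambda,\cdot)$ at the origin, and the Dominated Convergence Theorem closes the argument.

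For $(P_6)$, property $(P_1)$ of Lemma~\ref{Gradient of U} identifies, for each fixed $t>0$, the distribution $a(t)\partial_t V(\cdot,t)$ with $\mathcal{F}_x^{-1}\bigl(a(t)\partial_t\tilde g(|\xi|^2,t)\hat v\bigr)$. Pairing against an arbitrary test function $\psi\in\mathcal{S}(\mathbb{R}^n)$ via Parseval reduces the claim to
\begin{equation*}
\lim_{t\to 0^+}\int_{\mathbb{R}^n}a(t)\partial_t\tilde g(|\xi|^2,t)\,\hat v(\xi)\,\mathcal{F}^{-1}\psi(\xi)\,d\xi=-\int_{\mathbb{R}^n}m(|\xi|^2)\,\hat v(\xi)\,\mathcal{F}^{-1}\psi(\xi)\,d\xi.
\end{equation*}
The pointwise limit $a(t)\partial_t\tilde g(|\xi|^2,t)\to -m(|\xi|^2)$ is precisely item~$(4)$ of Theorem~\ref{Minimization Problem}, while Remark~\ref{bound on derivative of g} supplies the uniform bound $|a(t)\partial_t\tilde g(|\xi|^2,t)|\le m(|\xi|^2)$. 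Cauchy--Schwarz, together with $v\in H_a(\mathbb{R}^n)$, the growth $m(|\xi|^2)\le m(1)(1+|\xi|^2)$, and the Schwartz decay of $\mathcal{F}^{-1}\psi$, shows that $m(|\xi|^2)|\hat v(\xi)|\,|\mathcal{F}^{-1}\psi(\xi)|$ is integrable, so dominated convergence yields the claim.

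The only real obstacle is bookkeeping: checking that the bounds from Theorem~\ref{Minimization Problem}, Remark~\ref{bound on derivative of g}, and Lemma~\ref{Growth bound on Muckenhoupt} interlock with the definition of $H_a(\mathbb{R}^n)$ to produce a $t$-uniform dominating function. No additional PDE input is needed; both statements flow from Fourier isometry and dominated convergence, with $H_a(\mathbb{R}^n)$ providing exactly the integrability required to absorb the weight $m(|\xi|^2)$ on the frequency side.
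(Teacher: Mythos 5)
Your proposal is correct and follows the same overall route as the paper: reduce to the Fourier side via Plancherel, invoke the bounds from Theorem~\ref{Minimization Problem} and Remark~\ref{bound on derivative of g}, and conclude by dominated convergence. Your treatment of $(P_6)$ is essentially identical to the paper's.

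For $(P_5)$, you do something a little different and slightly cleaner. The paper splits $\mathbb{R}^n$ into $B_1$ and $\mathbb{R}^n\setminus B_1$: on $\mathbb{R}^n\setminus B_1$ it first establishes $\hat v\in L^2(\mathbb{R}^n\setminus B_1)$ and applies dominated convergence with majorant $|\hat v|^2$, while on $B_1$ it avoids dominated convergence altogether and instead bounds the integral directly by $m(1)[v]^2_{H_a(\mathbb{R}^n)}\bigl(\int_0^t 1/a\bigr)^2$. You instead interpolate the two available bounds $|\tilde g-1|\le 1$ and $|\tilde g-1|\le C\,m(|\xi|^2)(1+t^2)$ to produce a single $t$-uniform dominating function $C\,m(|\xi|^2)|\hat v|^2$ on all of $\mathbb{R}^n$, allowing one application of dominated convergence. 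This unification is a genuine small simplification of the paper's bookkeeping.

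One point you pass over quickly: the pointwise convergence $g(|\xi|^2,t)\to 1$ as $t\to 0^+$, which you attribute to ``continuity of $g(\lambda,\cdot)$ at the origin.'' Theorem~\ref{Minimization Problem} only asserts $g(\lambda,\cdot)\in C^2(\mathbb{R}_+)$ with $g(\lambda,0)=1$ as a boundary datum, so the claimed continuity at $t=0$ should be justified. The paper derives it from the estimate $|g(\lambda,t)-1|\le m(\lambda)\int_0^t a(\sigma)^{-1}\,d\sigma$ (your \eqref{servdopo0854oi0987}) together with the fact that $1/a\in L^1_{\text{loc}}([0,+\infty))$ (a consequence of the $A_2$ condition), so that $\int_0^t a(\sigma)^{-1}\,d\sigma\to 0$ as $t\to 0^+$ by absolute continuity of the integral. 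This is a small lemma, not a real gap, but worth making explicit since it is the one place where the $A_2$ hypothesis enters $(P_5)$ beyond supplying Lemma~\ref{Growth bound on Muckenhoupt}.
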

	
	\begin{proof}   
		We have that
		\begin{equation}\label{1234567qwertyui9876}
			\hat{v} \in L^2(\mathbb{R}^n\setminus B_1).\end{equation}
		Indeed, the monotonicity of~$m$ gives that
		\begin{equation*}
			\int_{\mathbb{R}^n\setminus B_1}|\hat{v}(\xi)|^2\,d\xi
			\leq m(1)^{-1}\int_{\mathbb{R}^n\setminus B_1}m(|\xi|^2)|\hat{v}(\xi)|^2\, d\xi \leq m(1)^{-1}[v]_{H_a(\mathbb{R}^n)}^2<+\infty,
		\end{equation*} which establishes~\eqref{1234567qwertyui9876}.
		
		Moreover, thanks to the properties of~$g$ in Theorem~\ref{Minimization Problem}, we see that~$g(|\xi|^2,t)\in[0,1]$
		and~$g(|\xi|^2,0)=1$. Thanks to these facts and~\eqref{1234567qwertyui9876}, we
		can employ the Dominated Convergence Theorem to conclude that
		\begin{equation}\label{t43ohjhfgiuwehfdt748itykfjdsh87654}
			\lim_{t\to0^+}\int_{\mathbb{R}^n\setminus B_1}\left(g(|\xi|^2,t)-1\right)^2|\hat{v}(\xi)|^2\,d\xi = 0. 
		\end{equation}
		
		Furthermore, the estimate in~\eqref{servdopo0854oi0987} and the monotonicity of~$m$ give that
		\begin{equation}\label{843920oedjshfhja54yzdhgli0}
			\begin{split}
				\int_{B_1}\left(g(|\xi|^2,t)-1\right)^2|\hat{v}(\xi)|^2\,d\xi 
				&\leq \left(\int_0^t \frac{1}{a(\sigma)} \,d\sigma\right)^2
				\int_{B_1} (m(|\xi|^2))^2 |\hat{v}(\xi)|^2\,d\xi \\
				&\leq m(1) [v]_{H_a(\mathbb{R}^n)}^2 \left(\int_0^t \frac{1}{a(\sigma)} \,d\sigma\right)^2.
			\end{split}
		\end{equation}
		We point out that
		\begin{equation}\label{843920oedjshfhja54yzdhgli}
			\lim_{t\to0^+} \int_0^t \frac{1}{a(\sigma)} \,d\sigma=0.
		\end{equation}
		Indeed, we apply the definition of~$A_2$-Muckenhoupt
		weight to find that, for every interval~$I$,
		$$ \left(\int_I a(\sigma)\,d\sigma\right) \left(\int_I \frac1{a(\sigma)}\,d\sigma\right)
		\le C|I|^2,$$
		and therefore, from the fact that~$a$ is locally integrable, we deduce that~$1/a$ is also
		locally integrable. 
		
		Accordingly, the weight~$1/a$ is also in the~$A_2$-Muckenhoupt class.
		As a consequence, we can employ 
		Proposition~7.2.8 in~\cite{MR3243734} (with~$p:=2$)
		and obtain that there exist~$C>0$, depending only on~$a$,
		and~$\delta \in (0,1)$ such that, for every interval~$I \subset \mathbb{R}_+$ and every measurable subset~$J$ of~$I$,
		\begin{equation*}
			\frac{\displaystyle\int_J \frac1{a(\sigma)}\,d\sigma}{\displaystyle\int_I \frac1{a(\sigma)}\,d\sigma}
			\leq C \left(\frac{|J|}{|I|}\right)^{\delta}
			.
		\end{equation*}
		Using this inequality with~$I:=(0,10)$ and~$J:=(0,t)$ with~$t\in(0,10)$, we thus find that
		$$   \frac{\displaystyle\int_0^t \frac1{a(\sigma)}\,d\sigma}{\displaystyle \int_0^{10} \frac1{a(\sigma)}\,d\sigma}\leq C \left(\frac{t}{10}\right)^{\delta},
		$$ which entails~\eqref{843920oedjshfhja54yzdhgli}.
		
		We thus use~\eqref{843920oedjshfhja54yzdhgli} into~\eqref{843920oedjshfhja54yzdhgli0}
		and obtain that
		$$\lim_{t\to0^+} \int_{B_1}\left(g(|\xi|^2,t)-1\right)^2|\hat{v}(\xi)|^2\,d\xi 
		=0.$$
		{F}rom this and~\eqref{t43ohjhfgiuwehfdt748itykfjdsh87654}
		we conclude that 
		\begin{equation*}\lim_{t\to0^+}
			\int_{\mathbb{R}^n}\left(g(|\xi|^2,t)-1\right)^2 |\hat{v}(\xi)|^2 \,d\xi = 0.
		\end{equation*}
		Consequently, the Plancherel identity gives that
		\begin{equation*}
			\begin{split}&\lim_{t\to0^+}
				\int_{\mathbb{R}^n}\left|V(x,t)-v(x)\right|^2 \,dx =\lim_{t\to0^+} \int_{\mathbb{R}^n}\left|\mathcal{F}_x(V)(\xi,t)-\hat{v}(\xi)\right|^2 \,d\xi \\
				&\qquad\qquad = \lim_{t\to0^+}\int_{\mathbb{R}^n}\left(g(|\xi|^2,t)-1\right)^2|\hat{v}(\xi)|^2 \,d\xi = 0.
			\end{split}
		\end{equation*}
		This proves~$(P_5)$ and we now focus on the proof of~$(P_6)$.
		
		We observe that, for any~$\psi\in \mathcal{S}({\mathbb{R}}^n)$ and~$t>0$,
		\begin{equation*}
			\begin{split}
				&    \left|\left\langle a(t)\mathcal{F}_x^{-1}(\partial_t g(|\xi|^2,t)\hat{v})-\mathcal{F}_x^{-1}(m(|\xi|^2)\hat{v}),\psi \right\rangle_{\mathcal{S}(\mathbb{R}^n)}\right|\\=\;& \left|\left\langle (-a(t)\partial_t g(|\xi|^2,t)-m(|\xi|^2))\hat{v},\mathcal{F}_x^{-1}{\psi} \right\rangle_{\mathcal{S}(\mathbb{R}^n)}\right|\\
				=\;& \left|\int_{{\mathbb{R}}^n} \big(-a(t)\partial_t g(|\xi|^2,t)-m(|\xi|^2)\big|\hat{v}(\xi)\mathcal{F}_x^{-1}{\psi}(\xi)\, d\xi\right|.
			\end{split}
		\end{equation*}
		We recall that~$|a(t)\partial_t g(|\xi|^2,t)|\le m(|\xi|^2)$, thanks to formula~\eqref{fyuiwoqvbcnx65748398765vbn}.
		Moreover, we have that~$v$, $\psi \in H_a({\mathbb{R}}^n)$, in light of Lemma~\ref{Density of Schwartz}. Also, by~$(4)$ of Theorem~\ref{Minimization Problem} we know that
		$$\lim_{t\to 0^+}a(t)\partial_t g(|\xi|^2,t) = -m(|\xi|^2).$$
		Therefore, using the Dominated Convergence Theorem, we conclude that 
		\begin{equation*}
			\lim_{t \to 0^+}\left\langle a(t)\mathcal{F}_x^{-1}(\partial_t g(|\xi|^2,t)\hat{v})-\mathcal{F}_x^{-1}(m(|\xi|^2)\hat{v}),\psi \right\rangle_{\mathcal{S}(\mathbb{R}^n)}=0,
		\end{equation*}
		which gives the desired convergence in~$(P_6)$.
	\end{proof}
	
	Now we present a lemma that extends the density-type result in
	Lemma~\ref{Density of Schwartz} to expressions of the form~$\mathcal{F}_{x}^{-1}(\tilde{g}(|\xi|^2,t)\hat{v})$, with~$v \in H_a(\mathbb{R}^n)$.
	
	The usefulness of the following lemma lies in the fact that it will allow us to show that~\eqref{Sol of Prob with Neumann} is a weak solution of~\eqref{Prob with Neumann}. 
	
	\begin{lemma}\label{Convergence of the gradients}
		Let~$v \in H_a({\mathbb{R}}^n)$ and~$\{v_j\}_{j\in \mathbb{N}} \subset \cS({\mathbb{R}}^n)$ be
		a sequence such that
		$$\lim_{j\to+\infty}[v-v_j]_{H_a(\mathbb{R}^n)}=0.$$
		Let~$V(x,t) := \mathcal{F}_x^{-1}(\tilde{g}(|\xi|^2,t)\hat{v})$ and~$V_j(x,t) := \mathcal{F}_x^{-1}(\tilde{g}(|\xi|^2,t)\hat{v}_j)$.
		
		Then,
		\begin{equation}\label{Convergence by density}
			\lim_{j\to+\infty}   \int_{{\mathbb{R}}^{n+1}_+}a(t)|\nabla (V_j(x,t)-V(x,t))|^2 \,dx\,dt =0.
		\end{equation}
		
		In particular, $V_j \to V$ in~$L^2_{\text{loc}}({\mathbb{R}}^{n+1}_+)$.
	\end{lemma}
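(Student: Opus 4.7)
The plan is to establish the energy convergence~\eqref{Convergence by density} by Fourier-transforming in the $x$-variable and exploiting the variational characterization of $g(|\xi|^{2},\cdot)$ as the minimizer of $G_{|\xi|^{2}}$, so that the inner $t$-integral collapses to $m(|\xi|^{2})$. The $L^{2}_{\text{loc}}$ statement then follows as a consequence.

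First, by linearity, $V-V_{j}=\mathcal{F}_{x}^{-1}\bigl(\tilde{g}(|\xi|^{2},t)(\hat{v}-\hat{v}_{j})\bigr)$, and by Lemma~\ref{Gradient of U}$(P_{1})$ its distributional gradient on $\mathbb{R}^{n+1}_{+}$ is
\begin{equation*}
\nabla(V-V_{j})=\Bigl(\mathcal{F}_{x}^{-1}\bigl(i\xi\,g(|\xi|^{2},t)(\hat{v}-\hat{v}_{j})\bigr),\; \mathcal{F}_{x}^{-1}\bigl(\partial_{t}g(|\xi|^{2},t)(\hat{v}-\hat{v}_{j})\bigr)\Bigr).
\end{equation*}
Plancherel's identity in $x$, for each fixed $t>0$, yields
\begin{equation*}
\int_{\mathbb{R}^{n}}|\nabla(V-V_{j})(x,t)|^{2}\,dx=\int_{\mathbb{R}^{n}}\bigl(|\xi|^{2}|g(|\xi|^{2},t)|^{2}+|\partial_{t}g(|\xi|^{2},t)|^{2}\bigr)|\hat{v}(\xi)-\hat{v}_{j}(\xi)|^{2}\,d\xi.
\end{equation*}
Multiplying by $a(t)$, integrating in $t\in\mathbb{R}_{+}$, and applying Fubini's theorem (permitted by nonnegativity of the integrand), I would then invoke Theorem~\ref{Minimization Problem} to recognize the inner integral, since $g(|\xi|^{2},\cdot)$ is the minimizer of $G_{|\xi|^{2}}$ over $H^{1}_{1}(\mathbb{R}_{+},a)$:
\begin{equation*}
\int_{\mathbb{R}_{+}}a(t)\bigl(|\xi|^{2}|g(|\xi|^{2},t)|^{2}+|\partial_{t}g(|\xi|^{2},t)|^{2}\bigr)\,dt=G_{|\xi|^{2}}(g(|\xi|^{2},\cdot))=m(|\xi|^{2}).
\end{equation*}
Combining these identities would give the identity
\begin{equation*}
\int_{\mathbb{R}^{n+1}_{+}}a(t)|\nabla(V-V_{j})|^{2}\,dx\,dt=\int_{\mathbb{R}^{n}}m(|\xi|^{2})|\hat{v}(\xi)-\hat{v}_{j}(\xi)|^{2}\,d\xi=[v-v_{j}]_{H_{a}(\mathbb{R}^{n})}^{2},
\end{equation*}
and then~\eqref{Convergence by density} is immediate from the hypothesis.

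For the $L^{2}_{\text{loc}}$ convergence, on a compact $K\subset\mathbb{R}^{n+1}_{+}$ bounded away from $\{t=0\}$ the continuous weight $a$ is bounded above and below by positive constants, so~\eqref{Convergence by density} gives $\nabla(V_{j}-V)\to 0$ in $L^{2}(K)$. To pass from the gradient to the function itself, I would decompose $V_{j}-V$ using the Fourier splitting $\{|\xi|\ge 1\}\cup\{|\xi|<1\}$: on the high-frequency part $\hat{v}_{j}-\hat{v}\to 0$ in $L^{2}$ directly, via $m(|\xi|^{2})\ge m(1)$ and the bound $|g|\le 1$; on the low-frequency part I would exploit the pointwise estimate $|g(|\xi|^{2},t)-1|\le m(|\xi|^{2})\int_{0}^{t}d\sigma/a(\sigma)$ already used in~\eqref{servdopo0854oi0987}, to reduce the contribution to $[v-v_{j}]_{H_{a}(\mathbb{R}^{n})}^{2}$ up to a constant depending on $K$.

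The key step is the variational identification of the inner integral as $m(|\xi|^{2})$, which is essentially cost-free once Theorem~\ref{Minimization Problem} is in hand. The main obstacle is really the $L^{2}_{\text{loc}}$ deduction: one must be careful that the implicit ``constant'' ambiguity in the $H_{a}$-seminorm does not obstruct pointwise convergence of $V_{j}$ to $V$; this is why the low-frequency analysis (done in the style of Lemma~\ref{Asymptotic Properties}), rather than a naive application of Poincaré-Wirtinger, is needed to close the argument.
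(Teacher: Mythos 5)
Your derivation of the main convergence \eqref{Convergence by density} is exactly the paper's argument: Plancherel in $x$ for each fixed $t$, Fubini, and then the variational identity
$\int_{\mathbb{R}_+} a(t)\big(|\xi|^2|g|^2+|\partial_t g|^2\big)\,dt = G_{|\xi|^2}(g(|\xi|^2,\cdot))=m(|\xi|^2)$
from Theorem~\ref{Minimization Problem}, which collapses the energy to $[v-v_j]_{H_a(\mathbb{R}^n)}^2$. That part is correct and matches the paper step for step.

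The issue is your argument for the ``in particular'' $L^2_{\text{loc}}$ statement. You are right to be wary of the constant ambiguity, but the Fourier splitting you propose does not close. On low frequencies, writing $g=1+(g-1)$, the term $(g-1)(\hat v_j-\hat v)$ is indeed controlled by $m(|\xi|^2)\int_0^t a^{-1}$ and hence by $[v_j-v]_{H_a}$; however the remaining term $\mathcal{F}_x^{-1}\big(\mathbf{1}_{B_1}(\hat v_j-\hat v)\big)$ produces exactly
$\int_{B_1}|\hat v_j(\xi)-\hat v(\xi)|^2\,d\xi$,
which is \emph{not} dominated by $[v_j-v]_{H_a}^2=\int m(|\xi|^2)|\hat v_j-\hat v|^2\,d\xi$, since $m(0)=0$ and $m$ vanishes as $|\xi|\to 0$ (Remark~\ref{m is a measurable function}). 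So the very term embodying the constant ambiguity is the one your estimate cannot reach; the splitting only reproves what you already knew, namely convergence modulo low-frequency (constant-like) content. For the record, the paper dispatches this part with a one-line appeal to ``the Sobolev Theorem'', i.e.\ the Deny--Lions/Poincar\'e--Sobolev machinery already used in Lemma~\ref{Gradient of U}~$(P_3)$ via~\cite{MR0074787}, which identifies a canonical $L^{2^*}$ representative in the homogeneous space and converts gradient convergence into $L^q_{\text{loc}}$ convergence for $q<\frac{2n+2}{n-1}$; that route avoids the low-frequency split entirely. If you want to keep a Fourier-side argument you would need an additional ingredient to pin down the low-frequency mass, for instance the trace convergence $(P_5)$ of Lemma~\ref{Asymptotic Properties} combined with a Poincar\'e inequality in $t$ on a slab $[t_1,t_2]$, rather than the bound $|g-1|\le m\int_0^t a^{-1}$ alone.
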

	
	\begin{proof}
		Since~$v_j\to v$ in~$H_a({\mathbb{R}}^n)$, we see that
		\begin{equation*}
			\begin{split}&
				\lim_{j\to+\infty}\int_{{\mathbb{R}}^{n+1}_+} a(t) |\nabla (V_j(x,t)-V(x,t))|^2 \, dx\,dt\\
				&= \lim_{j\to+\infty}
				\int_{{\mathbb{R}}^{n+1}_+} a(t)\left(|\xi|^2 |g(|\xi|^2,t)|^2+|\partial_t g(|\xi|^2,t)|^2\right)|\hat{v}(\xi)-\hat{v}_j(\xi)|^2\, d\xi\, dt\\
				&=\lim_{j\to+\infty}\int_{{\mathbb{R}}^n} m(|\xi|^2)|\hat{v}(\xi)-\hat{v}_j(\xi)|^2\, d\xi
				\\&= 0,
			\end{split}
		\end{equation*}
		which proves~\eqref{Convergence by density}.
		
		Furthermore, by the Sobolev Theorem, we have that~$V_j \to V$ in~$L^q_{\text{loc}}(\mathbb{R}^{n+1}_+)$ for any~$q\in\left[1,\frac{2n+2}{n-1}\right)$.
	\end{proof}
	
	\subsection{Proofs of Theorem~\ref{Convolution enters the chat} and Corollary~\ref{Rigidity result}}
	
	With the work done so far, we can now complete the proofs of Theorem~\ref{Convolution enters the chat} and Corollary~\ref{Rigidity result}.
	
	\begin{proof}[Proof of Theorem~\ref{Convolution enters the chat}]
		First, assume that~$u \in \mathcal{S}(\mathbb{R}^n)$ and let~$U$ be as in~\eqref{Sol of Prob with Neumann}. {F}rom~$(P_1)$ in Lemma~\ref{Gradient of U} we see that, for all~$t\in\mathbb{R}_+$,
		\begin{eqnarray*}
			\div(a(t)\nabla U)
			&=&\div\Big(a(t)
			\big(\mathcal{F}_x^{-1}(i\xi {g}(|\xi|^2,t)\hat{u}), \mathcal{F}_x^{-1}(\partial_t {g}(|\xi|^2,t)\hat{u})\big)\Big)\\
			&=&\mathcal{F}_{x}^{-1}\left(\left(-|\xi|^2 a(t) {g}(|\xi|^2,t) + \frac{d}{dt}\left(a(t)\partial_t {g}(|\xi|^2,t)\right)\right)\hat{u}\right).
		\end{eqnarray*}
		Consequently, 
		from the equation for~$g$ in Theorem~\ref{Minimization Problem} we deduce that 
		$$
		\div(a(t)\nabla U) = 0.
		$$
		Therefore, for any~$\varphi \in C^{\infty}_c(\mathbb{R}^{n+1})$,
		using~$(P_6)$ of Lemma~\ref{Asymptotic Properties},
		\begin{equation*}
			\begin{split}
				\int_{\mathbb{R}^{n+1}_+}a(t)\nabla U\cdot \nabla \varphi\,dx\,dt &= \int_{\mathbb{R}^{n+1}_+}\div(a(t) \varphi \nabla U)-\varphi\div(a(t)\nabla U)\,dx\,dt\\
				&= -\int_{\mathbb{R}^n}\lim_{t\to 0^+} a(t)\partial_t U(x,t)\varphi \,dx\\
				&=\int_{\mathbb{R}^n}
				\mathcal{F}^{-1}(m(|\xi|^2)\hat{u})\varphi \,dx\\
				&= \int_{\mathbb{R}^n} f \varphi\,dx,
			\end{split}
		\end{equation*}
		where in the last equality
		we have used the notation in~\eqref{r43308kfjvbnmghjksagfkuegwut}.
		
		Therefore, $U$ is a weak solution of~\eqref{Prob with Neumann} and the fact that~$U$ has finite energy is a consequence of the fact that~$u \in H_a(\mathbb{R}^n)$ and Lemma~\ref{Gradient of U}. 
		
		For a general~$u \in H_a(\mathbb{R}^n)$,
		in light of Lemma~\ref{Density of Schwartz}, we can consider~$u_j \in \mathcal{S}(\mathbb{R}^n)$ such that
		\begin{equation}\label{cvgdftsd894djx90}
			\lim_{j\to+\infty}[u_j-u]_{H_a(\mathbb{R}^n)}=0.\end{equation}
		We define~$U_j := \mathcal{F}^{-1}_{x}\left(\tilde{g}(|\xi|^2,t)\hat{u}_j\right)$ and~$f_j := \mathcal{F}_{x}^{-1}\left(m(|\xi|^2)\hat{u}_j\right)$ (note that~$ f_j \in \text{Im}(\mathcal{L}_a)\cap L^1_{\text{loc}}(\mathbb{R}^n)$). 
		
		{F}rom the first part of this proof, we have that
		\begin{equation}\label{cvgdftsd894djx9023}
			\int_{\mathbb{R}^{n+1}_+}a(t) \nabla U_j \cdot \nabla \psi \,dx\,dt=
			\int_{\mathbb{R}^n}f_j \psi(x,0)\, dx.
		\end{equation}
		
		We claim that
		\begin{equation}\label{cvgdftsd894djx902}
			\lim_{j\to+\infty}\int_{\mathbb{R}^n} f_j(x)\psi(x,0)\,dx=\int_{\mathbb{R}^n} f(x) \psi(x,0)\,dx.
		\end{equation}
		Indeed, for every~$\psi \in C^{\infty}_c (\mathbb{R}^{n+1})$, 
		\begin{eqnarray*}&&
			\int_{\mathbb{R}^n} \big(f_j(x)-f(x)\big) \psi(x,0)\,dx\\&=&
			\int_{\mathbb{R}^n} \big(\mathcal{F}_{x}^{-1}\left(m(|\xi|^2)\hat{u}_j\right)(x)
			-\mathcal{F}^{-1}(m(|\xi|^2)\hat{u})(x)\big) \psi(x,0)\,dx \\&=&
			\int_{\mathbb{R}^n} m(|\xi|^2)\big(\hat{u}_j(\xi)-\hat{u}(\xi)\big)\mathcal{F}_x\psi(\xi,0)\,d\xi.
		\end{eqnarray*}
		Hence, by the H\"older inequality,
		\begin{eqnarray*}
			&&\left|\int_{\mathbb{R}^n} \big(f_j(x)-f(x)\big) \psi(x,0)\,dx\right|\\
			&\leq&\left(
			\int_{\mathbb{R}^n} m(|\xi|^2)\big|\hat{u}_j(\xi)-\hat{u}(\xi)\big|^2\,d\xi\right)^{1/2}
			\left(\int_{\mathbb{R}^n} m(|\xi|^2)|\mathcal{F}_x\psi(\xi,0)|^2\,d\xi\right)^{1/2}\\&\leq& (m(1))^{1/2}
			[{u}_j-{u}]_{H_a(\mathbb{R}^n)}
			\left(\int_{\mathbb{R}^n} (1+|\xi|^2)|\mathcal{F}_x\psi(\xi,0)|^2\,d\xi\right)^{1/2}.
		\end{eqnarray*}
		Since~$\psi\in C^{\infty}_c (\mathbb{R}^{n+1})$, from this and~\eqref{cvgdftsd894djx90}
		we deduce~\eqref{cvgdftsd894djx902}.
		
		Moreover, by Lemma~\ref{Convergence of the gradients},
		\begin{equation*}\begin{split}&\lim_{j\to+\infty}\left|
				\int_{\mathbb{R}^{n+1}_+}a(t) \big(\nabla U -\nabla U_j\big)\cdot \nabla \psi \,dx\,dt\right| \\ \leq \;&\lim_{j\to+\infty}\left(
				\int_{\mathbb{R}^{n+1}_+}a(t) \big|\nabla U -\nabla U_j\big|^2 \,dx\,dt\right)^{1/2}
				\left(\int_{\mathbb{R}^{n+1}_+}a(t) | \nabla \psi|^2 \,dx\,dt\right)^{1/2}
				\\ =\;&0.\end{split}
		\end{equation*}
		{F}rom this, \eqref{cvgdftsd894djx9023} and~\eqref{cvgdftsd894djx902}, we deduce that
		\begin{equation*}
			\begin{split}
				\int_{\mathbb{R}^{n+1}_+}a(t) \nabla U \cdot \nabla \psi \,dx\,dt &=\displaystyle\lim_{j\to \infty} \int_{\mathbb{R}^{n+1}_+}a(t) \nabla U_j \cdot \nabla \psi \,dx\,dt\\
				&=\displaystyle\lim_{j\to \infty}\int_{\mathbb{R}^n}f_j \psi(x,0)\, dx\\
				&=\int_{\mathbb{R}^n}f\psi(x,0)\, dx.
			\end{split}
		\end{equation*}
		Hence, $U$ is a weak solution of~\eqref{Prob with Neumann}. 
		
		Since we are only considering finite-energy solutions of~\eqref{Prob with Neumann}, the result follows from the uniqueness statement in Lemma~\ref{Uniqueness of finite energy}. 
	\end{proof}
	
	\begin{proof}[Proof of Corollary~\ref{Rigidity result}]
		By Theorem~\ref{Convolution enters the chat} we know that we can write~$U = \mathcal{F}_x^{-1}\left(\tilde{g}(|\xi|^2,t) \hat{u}\right)+ c$, for some constant~$c \in \mathbb{R}$. 
		
		Since~$u$ is one-dimensional, due to the assumption that~$u \in W^{1,1}_{\text{loc}}(\mathbb{R}^n)$, there exists~$\nu\in \mathbb{R}^n$ such that~$\nu\cdot \nabla u = 0$.
		
		Also, by~$(P_1)$ in Lemma~\ref{Gradient of U},
		\begin{equation*}
			(\nu,0)\cdot \nabla U =
			\nu\cdot \mathcal{F}_x^{-1}\left(i\xi\tilde{g}(|\xi|^2,t) \hat{u}\right)= 
			\mathcal{F}_x^{-1}\left(\tilde{g}(|\xi|^2,t) \hat{\nu\cdot \nabla u}\right) =0,
		\end{equation*}  
		which proves that~$U(\cdot, t):\mathbb{R}^n\rightarrow \mathbb{R}$ is also one-dimensional.
	\end{proof}
	
	\begin{appendix}
		
		\section{Proof of some technical lemmata}\label{gyu4io3vbcnx76859430}
		
		Here we provide the proofs of some technical results stated in Section~\ref{section4}.
		
		\begin{proof}[Proof of Lemma~\ref{Muckenhoupt is not L1}]
			Given an interval~$I\subset\mathbb{R}$, we denote by~$|I|$ the Lebesgue measure and, 
			for simplicity, we also use the notation
			$$ \mathcal{I}(I) := \int_I \omega(\tau)\,d\tau.$$
			
			Since~$\omega$ is an~$A_2$-Muckenhoupt weight, we can employ
			Proposition~7.2.8 in~\cite{MR3243734} (with~$p:=2$).
			In this way,
			we find that there exist~$C>0$, depending only on~$\omega$,
			and~$\delta \in (0,1)$ such that, for every interval~$I \subset \mathbb{R}_+$ and every measurable subset~$J$ of~$I$,
			\begin{equation}\label{estimate on quotient of measures}
				\frac{\mathcal{I}(J)}{\mathcal{I}(I)}\leq C \left(\frac{|J|}{|I|}\right)^{\delta}.
			\end{equation}
			
			Now, we assume, with a view by contradiction, that~$\omega \in L^1(\mathbb{R}_+)$
			and we define, for all~$j\in\mathbb{N}$,
			$$
			I_j := (0,j)\qquad\text{and}\qquad J_j = (0,j^{1/2}).
			$$
			Since~$\omega \in L^1(\mathbb{R}_+)$ we have that for every~$\eps >0$ there exists~$j_\eps \in \mathbb{N}$ sufficiently large such that
			\begin{eqnarray*}
				&&\|\omega\|_{L^{1}(\mathbb{R}_+)}-\eps \leq\mathcal{I}(I_{j_\eps}) \leq \|\omega\|_{L^{1}(\mathbb{R}_+)}\\
				{\mbox{and }}&& \|\omega\|_{L^{1}(\mathbb{R}_+)}-\eps \leq\mathcal{I}(J_{j_\eps}) \leq \|\omega\|_{L^{1}(\mathbb{R}_+)}
				.
			\end{eqnarray*}
			{F}rom this, we deduce that
			$$
			\frac{\mathcal{I}(J_{j_\eps})}{\mathcal{I}(I_{j_\eps})}
			\geq \frac{\|\omega\|_{L^1(\mathbb{R}_+)}-\eps}{\|\omega\|_{L^1(\mathbb{R}_+)}}
			=1-\frac{\eps}{\|\omega\|_{L^1(\mathbb{R}_+)}}.
			$$ 
			Therefore, using~\eqref{estimate on quotient of measures} with~$I:=I_{j_\eps}$
			and~$J:=J_{j_\eps}$, we obtain that
			\begin{equation*}
				1-\frac{\eps}{\|\omega\|_{L^1(\mathbb{R}_+)}}
				\leq \frac{\mathcal{I}(J_{j_\eps})}{\mathcal{I}(I_{j_\eps})}
				\leq C \left(\frac{|J_{j_\eps}|}{|I_{j_\eps}|}\right)^{\delta}= C j_\eps^{-\frac{\delta}{2}},
			\end{equation*}
			which, sending~$\eps\searrow0$, leads to a contradiction.
		\end{proof}
		
		\begin{proof}[Proof of Lemma~\ref{Growth bound on Muckenhoupt}]
		Let~$t_0>0$.
			Leveraging the fact that~$\omega>0$, we see that there exists~$C_{0}>0$ such that
			\begin{equation}\label{yurweir349856t7uew65473890wqopdihjs}
				\int_{0}^{t_{0}}\omega(\tau)\,d\tau\ge C_0.\end{equation}
			
			Now, for all~$t\in[0, t_0]$,
			\begin{equation*}
				\left(\int_{0}^{t_{0}}\omega(\tau)\,d\tau\right)\left(
				\int_{0}^{t}\frac{d\tau}{\omega(\tau)}\right)\leq 
				\left(\int_{0}^{t_{0}}\omega(\tau)\,d\tau\right)\left(
				\int_{0}^{t_0}\frac{d\tau}{\omega(\tau)}\right)\le Ct_0^2.
			\end{equation*}
			This and~\eqref{yurweir349856t7uew65473890wqopdihjs} give that, if~$t\in[0, t_0]$,
			\begin{equation}\label{mnbvcx98765412345678zxcvbn}
				\int_{0}^{t}\frac{d\tau}{\omega(\tau)}\le C,
			\end{equation} up to renaming~$C$.
			
			Also, for~$t\geq t_0$,
			\begin{equation*}
				\left(\int_{0}^{t_{0}}\omega(\tau)\,d\tau\right)\left(\int_{0}^{t}\frac{d\tau}{\omega(\tau)}\right)\leq \left(\int_{0}^{t}\omega(\tau)\,d\tau\right)\left(\int_{0}^{t}\frac{d\tau}{\omega(\tau)}\right)\leq Ct^2.
			\end{equation*}
			Using~\eqref{yurweir349856t7uew65473890wqopdihjs}, 
			we thus obtain that
			$$
			\int_{0}^{t}\frac{d\tau}{\omega(\tau)}\leq C t^2.
			$$
			{F}rom this and~\eqref{mnbvcx98765412345678zxcvbn}
			the desired result follows.
		\end{proof}
		
		\begin{proof}[Proof of Lemma~\ref{Density of Schwartz}]
			We first check that~${\mathcal{S}}(\mathbb{R}^n)\subset H_a(\mathbb{R}^n)$. For this,
			let~$\varphi\in{\mathcal{S}}(\mathbb{R}^n)$ and
			notice that~$\varphi\in{\mathcal{S}}'(\mathbb{R}^n)$
			and~$\hat\varphi$ is a measurable function (in fact, it belongs to~${\mathcal{S}}(\mathbb{R}^n)$). 
			
			Hence, to check that~$\varphi\in H_a(\mathbb{R}^n)$ it remains to show that~$[\varphi]_{H_a(\mathbb{R}^n)}<+\infty$. To this end, 
			we observe that~$m(|\xi|^2)\leq m(1)(1+|\xi|^2)$, and therefore
			\begin{equation*} [\varphi]_{H_a(\mathbb{R}^n)}^2=
				\int_{\mathbb{R}^n} m(|\xi|^2) |\hat{\varphi}(\xi)|^2\,d\xi
				\leq  m(1)\int_{\mathbb{R}^n} (1+|\xi|^2) |\hat{\varphi}(\xi)|^2\,d\xi
				<+\infty, 
			\end{equation*}
			as desired.
			
			We now show~\eqref{4358rufinittesalimit}.
			For this, let~$v \in H_a(\mathbb{R}^n)$ and~$\delta>0$.
			Also, pick~$\varepsilon\in(0,1)$ small enough and~$R>1$ large enough such that 
			\begin{equation*}
				\int_{B_{\varepsilon}}m(|\xi|^2)|\hat{v}(\xi)|^2\, d\xi < \frac{\delta}{3},\qquad
				\text{and}\qquad \int_{\mathbb{R}^n\setminus B_{R}}m(|\xi|^2)|\hat{v}(\xi)|^2 \, d\xi < \frac{\delta}{3}.
			\end{equation*}
			Moreover, we observe that, since~$m$ is monotone increasing,
			\begin{equation*}
				m(\varepsilon^2)\int_{B_R\setminus B_{\varepsilon}} |\hat{v}(\xi)|^2\,d\xi
				\leq \int_{B_R\setminus B_{\varepsilon}} m(|\xi|^2)|\hat{v}(\xi)|^2\,d\xi \leq m(R^2)\int_{B_R\setminus B_{\varepsilon}} |\hat{v}(\xi)|^2\,d\xi.
			\end{equation*}
			In particular, this entails that~$\hat{v} \in L^2(B_R\setminus B_{\varepsilon})$.
			
			Accordingly, we can take~$\varphi \in \mathcal{S}(\mathbb{R}^n)$ such that~$\hat{\varphi} \in C^{\infty}_c(B_R\setminus B_{\varepsilon})$ and 
			\begin{equation*}
				\int_{B_R\setminus B_{\varepsilon}} |\hat{v}(\xi)-\hat{\varphi}(\xi)|^2\,d\xi< \frac{\delta}{3m(R^2)}.
			\end{equation*}
			Then, gathering all these pieces of information, we see that
			\begin{eqnarray*}
				&&[v-\varphi]_{H_a(\mathbb{R}^n)}^2
				\\&=&\int_{\mathbb{R}^n}m(|\xi|^2)|\hat{v}(\xi)-\hat{\varphi}(\xi)|^2\,d\xi
				\\&=&
				\int_{B_\varepsilon}m(|\xi|^2)|\hat{v}(\xi)|^2\,d\xi
				+\int_{B_R\setminus B_\varepsilon}m(|\xi|^2)|\hat{v}(\xi)-\hat{\varphi}(\xi)|^2\,d\xi
				+\int_{\mathbb{R}^n\setminus B_R}m(|\xi|^2)|\hat{v}(\xi)|^2\,d\xi\\
				&\leq&\int_{B_\varepsilon}m(|\xi|^2)|\hat{v}(\xi)|^2\,d\xi +
				m(R^2)\int_{B_R\setminus B_\varepsilon}|\hat{v}(\xi)-\hat{\varphi}(\xi)|^2\,d\xi +
				\int_{\mathbb{R}^n\setminus B_R}m(|\xi|^2)|\hat{v}(\xi)|^2\,d\xi
				\\&<&\delta.
			\end{eqnarray*}
			This gives~\eqref{4358rufinittesalimit}, as desired.
		\end{proof}
	\end{appendix}
	
	\section*{Acknowledgements} 
	
	Serena Dipierro, Giorgio Poggesi, and Enrico Valdinoci are members of the Australian Mathematical Society (AustMS).
	Serena Dipierro is supported by the Australian Research Council
	Future Fellowship FT230100333
	``New perspectives on nonlocal equations''.
	Jo{\~a}o Gon\c{c}alves da Silva and
	Giorgio Poggesi are supported by the Australian Research Council Discovery Early Career Researcher Award (DECRA) DE230100954 ``Partial Differential Equations: geometric aspects and applications''.
	Jo{\~a}o Gon\c{c}alves da Silva is supported by a Scholarship for International Research Fees at The University of Western Australia.
	Enrico Valdinoci is supported by the Australian Laureate Fellowship FL190100081 ``Minimal surfaces, free boundaries and partial differential equations''.
	
	\begin{bibdiv}
		\begin{biblist}
			
			\bib{MR1843784}{article}{
				author={Alberti, Giovanni},
				author={Ambrosio, Luigi},
				author={Cabr\'{e}, Xavier},
				title={On a long-standing conjecture of E. De Giorgi: symmetry in 3D for
					general nonlinearities and a local minimality property},
				note={Special issue dedicated to Antonio Avantaggiati on the occasion of
					his 70th birthday},
				journal={Acta Appl. Math.},
				volume={65},
				date={2001},
				number={1-3},
				pages={9--33},
				issn={0167-8019},
				review={\MR{1843784}},
				doi={10.1023/A:1010602715526},
			}
			
			\bib{MR1775735}{article}{
				author={Ambrosio, Luigi},
				author={Cabr\'{e}, Xavier},
				title={Entire solutions of semilinear elliptic equations in $\bold R^3$
					and a conjecture of De Giorgi},
				journal={J. Amer. Math. Soc.},
				volume={13},
				date={2000},
				number={4},
				pages={725--739},
				issn={0894-0347},
				review={\MR{1775735}},
				doi={10.1090/S0894-0347-00-00345-3},
			}
			
			\bib{MR1755949}{article}{
				author={Barlow, Martin T.},
				author={Bass, Richard F.},
				author={Gui, Changfeng},
				title={The Liouville property and a conjecture of De Giorgi},
				journal={Comm. Pure Appl. Math.},
				volume={53},
				date={2000},
				number={8},
				pages={1007--1038},
				issn={0010-3640},
				review={\MR{1755949}},
				doi={10.1002/1097-0312(200008)53:8<1007::AID-CPA3>3.3.CO;2-L},
			}
			
			\bib{MR1655510}{article}{
				author={Berestycki, Henri},
				author={Caffarelli, Luis},
				author={Nirenberg, Louis},
				title={Further qualitative properties for elliptic equations in unbounded
					domains},
				note={Dedicated to Ennio De Giorgi},
				journal={Ann. Scuola Norm. Sup. Pisa Cl. Sci. (4)},
				volume={25},
				date={1997},
				number={1-2},
				pages={69--94 (1998)},
				issn={0391-173X},
				review={\MR{1655510}},
			}
			
			\bib{MR1763653}{article}{
				author={Berestycki, H.},
				author={Hamel, F.},
				author={Monneau, R.},
				title={One-dimensional symmetry of bounded entire solutions of some
					elliptic equations},
				journal={Duke Math. J.},
				volume={103},
				date={2000},
				number={3},
				pages={375--396},
				issn={0012-7094},
				review={\MR{1763653}},
				doi={10.1215/S0012-7094-00-10331-6},
			}
			
			\bib{MR2579363}{article}{
				author={Birindelli, Isabeau},
				author={Ferrari, Fausto},
				author={Valdinoci, Enrico},
				title={Semilinear PDEs in the Heisenberg group: the role of the right
					invariant vector fields},
				journal={Nonlinear Anal.},
				volume={72},
				date={2010},
				number={2},
				pages={987--997},
				issn={0362-546X},
				review={\MR{2579363}},
				doi={10.1016/j.na.2009.07.039},
			}
			
			\bib{MR1949145}{article}{
				author={Birindelli, Isabeau},
				author={Lanconelli, Ermanno},
				title={A note on one dimensional symmetry in Carnot groups},
				language={English, with English and Italian summaries},
				journal={Atti Accad. Naz. Lincei Cl. Sci. Fis. Mat. Natur. Rend. Lincei
					(9) Mat. Appl.},
				volume={13},
				date={2002},
				number={1},
				pages={17--22},
				issn={1120-6330},
				review={\MR{1949145}},
			}
			
			\bib{MR2583503}{article}{
				author={Birindelli, Isabeau},
				author={Mazzeo, Rafe},
				title={Symmetry for solutions of two-phase semilinear elliptic equations
					on hyperbolic space},
				journal={Indiana Univ. Math. J.},
				volume={58},
				date={2009},
				number={5},
				pages={2347--2368},
				issn={0022-2518},
				review={\MR{2583503}},
				doi={10.1512/iumj.2009.58.3714},
			}
			
			\bib{MR2773153}{article}{
				author={Birindelli, Isabeau},
				author={Valdinoci, Enrico},
				title={On the Allen-Cahn equation in the Grushin plane: a monotone entire
					solution that is not one-dimensional},
				journal={Discrete Contin. Dyn. Syst.},
				volume={29},
				date={2011},
				number={3},
				pages={823--838},
				issn={1078-0947},
				review={\MR{2773153}},
				doi={10.3934/dcds.2011.29.823},
			}
			
			\bib{MR4204715}{article}{
				author={Bj\"{o}rn, Anders},
				author={Bj\"{o}rn, Jana},
				author={Shanmugalingam, Nageswari},
				title={The Liouville theorem for $p$-harmonic functions and
					quasiminimizers with finite energy},
				journal={Math. Z.},
				volume={297},
				date={2021},
				number={1-2},
				pages={827--854},
				issn={0025-5874},
				review={\MR{4204715}},
				doi={10.1007/s00209-020-02536-2},
			}
			
			\bib{MR3469920}{book}{
				author={Bucur, Claudia},
				author={Valdinoci, Enrico},
				title={Nonlocal diffusion and applications},
				series={Lecture Notes of the Unione Matematica Italiana},
				volume={20},
				publisher={Springer, [Cham]; Unione Matematica Italiana, Bologna},
				date={2016},
				pages={xii+155},
				isbn={978-3-319-28738-6},
				isbn={978-3-319-28739-3},
				review={\MR{3469920}},
				doi={10.1007/978-3-319-28739-3},
			}
			
			\bib{MR2644786}{article}{
				author={Cabr\'{e}, Xavier},
				author={Cinti, Eleonora},
				title={Energy estimates and 1-D symmetry for nonlinear equations
					involving the half-Laplacian},
				journal={Discrete Contin. Dyn. Syst.},
				volume={28},
				date={2010},
				number={3},
				pages={1179--1206},
				issn={1078-0947},
				review={\MR{2644786}},
				doi={10.3934/dcds.2010.28.1179},
			}
			
			\bib{MR3148114}{article}{
				author={Cabr\'{e}, Xavier},
				author={Cinti, Eleonora},
				title={Sharp energy estimates for nonlinear fractional diffusion
					equations},
				journal={Calc. Var. Partial Differential Equations},
				volume={49},
				date={2014},
				number={1-2},
				pages={233--269},
				issn={0944-2669},
				review={\MR{3148114}},
				doi={10.1007/s00526-012-0580-6},
			}
			
			\bib{MR3280032}{article}{
				author={Cabr\'{e}, Xavier},
				author={Sire, Yannick},
				title={Nonlinear equations for fractional Laplacians II: Existence,
					uniqueness, and qualitative properties of solutions},
				journal={Trans. Amer. Math. Soc.},
				volume={367},
				date={2015},
				number={2},
				pages={911--941},
				issn={0002-9947},
				review={\MR{3280032}},
				doi={10.1090/S0002-9947-2014-05906-0},
			}
			
			\bib{MR2177165}{article}{
				author={Cabr\'{e}, Xavier},
				author={Sol\`a-Morales, Joan},
				title={Layer solutions in a half-space for boundary reactions},
				journal={Comm. Pure Appl. Math.},
				volume={58},
				date={2005},
				number={12},
				pages={1678--1732},
				issn={0010-3640},
				review={\MR{2177165}},
				doi={10.1002/cpa.20093},
			}
			
			\bib{MR1296785}{article}{
				author={Caffarelli, Luis},
				author={Garofalo, Nicola},
				author={Seg\`ala, Fausto},
				title={A gradient bound for entire solutions of quasi-linear equations
					and its consequences},
				journal={Comm. Pure Appl. Math.},
				volume={47},
				date={1994},
				number={11},
				pages={1457--1473},
				issn={0010-3640},
				review={\MR{1296785}},
				doi={10.1002/cpa.3160471103},
			}
			
			\bib{MR2354493}{article}{
				author={Caffarelli, Luis},
				author={Silvestre, Luis},
				title={An extension problem related to the fractional Laplacian},
				journal={Comm. Partial Differential Equations},
				volume={32},
				date={2007},
				number={7-9},
				pages={1245--1260},
				issn={0360-5302},
				review={\MR{2354493}},
				doi={10.1080/03605300600987306},
			}
			
			\bib{MR1942128}{article}{
				author={Danielli, Donatella},
				author={Garofalo, Nicola},
				title={Properties of entire solutions of non-uniformly elliptic equations
					arising in geometry and in phase transitions},
				journal={Calc. Var. Partial Differential Equations},
				volume={15},
				date={2002},
				number={4},
				pages={451--491},
				issn={0944-2669},
				review={\MR{1942128}},
				doi={10.1007/s005260100133},
			}
			
			\bib{MR533166}{article}{
				author={De Giorgi, Ennio},
				title={Convergence problems for functionals and operators},
				conference={
					title={Proceedings of the International Meeting on Recent Methods in
						Nonlinear Analysis},
					address={Rome},
					date={1978},
				},
				book={
					publisher={Pitagora, Bologna},
				},
				date={1979},
				pages={131--188},
				review={\MR{533166}},
			}
			
			\bib{MR2846486}{article}{
				author={del Pino, Manuel},
				author={Kowalczyk, Micha\l },
				author={Wei, Juncheng},
				title={On De Giorgi's conjecture in dimension $N\geq 9$},
				journal={Ann. of Math. (2)},
				volume={174},
				date={2011},
				number={3},
				pages={1485--1569},
				issn={0003-486X},
				review={\MR{2846486}},
				doi={10.4007/annals.2011.174.3.3},
			}
			
			\bib{MR0074787}{article}{
				author={Deny, J.},
				author={Lions, J. L.},
				title={Les espaces du type de Beppo Levi},
				language={French},
				journal={Ann. Inst. Fourier (Grenoble)},
				volume={21},
				date={1945},
				pages={305--370 (1955)},
				issn={0373-0956},
				review={\MR{0074787}},
			}
			
			\bib{MR3740395}{article}{
				author={Dipierro, Serena},
				author={Farina, Alberto},
				author={Valdinoci, Enrico},
				title={A three-dimensional symmetry result for a phase transition
					equation in the genuinely nonlocal regime},
				journal={Calc. Var. Partial Differential Equations},
				volume={57},
				date={2018},
				number={1},
				pages={Paper No. 15, 21},
				issn={0944-2669},
				review={\MR{3740395}},
				doi={10.1007/s00526-017-1295-5},
			}
			\bib{MR4224860}{article}{
				author={Dipierro, Serena},
				author={Poggesi, Giorgio},
				author={Valdinoci, Enrico},
				title={A quantitative rigidity result for a two-dimensional
					Frenkel-Kontorova model},
				journal={Phys. D},
				volume={419},
				date={2021},
				pages={Paper No. 132871, 44},
				issn={0167-2789},
				review={\MR{4224860}},
				doi={10.1016/j.physd.2021.132871},
			}
			
			\bib{MR4124116}{article}{
				author={Dipierro, Serena},
				author={Serra, Joaquim},
				author={Valdinoci, Enrico},
				title={Improvement of flatness for nonlocal phase transitions},
				journal={Amer. J. Math.},
				volume={142},
				date={2020},
				number={4},
				pages={1083--1160},
				issn={0002-9327},
				review={\MR{4124116}},
				doi={10.1353/ajm.2020.0032},
			}
			
			\bib{MR4581189}{article}{
				author={Dipierro, Serena},
				author={Valdinoci, Enrico},
				title={Some perspectives on (non)local phase transitions and minimal
					surfaces},
				journal={Bull. Math. Sci.},
				volume={13},
				date={2023},
				number={1},
				pages={Paper No. 2330001, 77},
				issn={1664-3607},
				review={\MR{4581189}},
				doi={10.1142/S1664360723300013},
			}
			
			\bib{MR643158}{article}{
				author={Fabes, Eugene B.},
				author={Kenig, Carlos E.},
				author={Serapioni, Raul P.},
				title={The local regularity of solutions of degenerate elliptic
					equations},
				journal={Comm. Partial Differential Equations},
				volume={7},
				date={1982},
				number={1},
				pages={77--116},
				issn={0360-5302},
				review={\MR{643158}},
				doi={10.1080/03605308208820218},
			}
			
			\bib{MR1688549}{article}{
				author={Farina, Alberto},
				title={Some remarks on a conjecture of De Giorgi},
				journal={Calc. Var. Partial Differential Equations},
				volume={8},
				date={1999},
				number={3},
				pages={233--245},
				issn={0944-2669},
				review={\MR{1688549}},
				doi={10.1007/s005260050124},
			}
			
			\bib{MR1765681}{article}{
				author={Farina, Alberto},
				title={Symmetry for solutions of semilinear elliptic equations in $\bold
					R^N$ and related conjectures},
				note={Papers in memory of Ennio De Giorgi (Italian)},
				journal={Ricerche Mat.},
				volume={48},
				date={1999},
				number={suppl.},
				pages={129--154},
				issn={0035-5038},
				review={\MR{1765681}},
			}
			
			\bib{MR2014827}{article}{
				author={Farina, Alberto},
				title={One-dimensional symmetry for solutions of quasilinear equations in
					$\Bbb R^2$},
				language={English, with English and Italian summaries},
				journal={Boll. Unione Mat. Ital. Sez. B Artic. Ric. Mat. (8)},
				volume={6},
				date={2003},
				number={3},
				pages={685--692},
				issn={0392-4041},
				review={\MR{2014827}},
			}
			
			\bib{MR2483642}{article}{
				author={Farina, Alberto},
				author={Sciunzi, Berardino},
				author={Valdinoci, Enrico},
				title={Bernstein and De Giorgi type problems: new results via a geometric
					approach},
				journal={Ann. Sc. Norm. Super. Pisa Cl. Sci. (5)},
				volume={7},
				date={2008},
				number={4},
				pages={741--791},
				issn={0391-173X},
				review={\MR{2483642}},
			}
			
			\bib{MR2413100}{article}{
				author={Farina, Alberto},
				author={Valdinoci, Enrico},
				title={Geometry of quasiminimal phase transitions},
				journal={Calc. Var. Partial Differential Equations},
				volume={33},
				date={2008},
				number={1},
				pages={1--35},
				issn={0944-2669},
				review={\MR{2413100}},
				doi={10.1007/s00526-007-0146-1},
			}
			\bib{MR2528756}{article}{
				author={Farina, Alberto},
				author={Valdinoci, Enrico},
				title={The state of the art for a conjecture of De Giorgi and related
					problems},
				conference={
					title={Recent progress on reaction-diffusion systems and viscosity
						solutions},
				},
				book={
					publisher={World Sci. Publ., Hackensack, NJ},
				},
				date={2009},
				pages={74--96},
				review={\MR{2528756}},
				doi={10.1142/9789812834744\_0004},
			}
			
			\bib{MR2728579}{article}{
				author={Farina, Alberto},
				author={Valdinoci, Enrico},
				title={1D symmetry for solutions of semilinear and quasilinear elliptic
					equations},
				journal={Trans. Amer. Math. Soc.},
				volume={363},
				date={2011},
				number={2},
				pages={579--609},
				issn={0002-9947},
				review={\MR{2728579}},
				doi={10.1090/S0002-9947-2010-05021-4},
			}
			
			\bib{MR2952412}{article}{
				author={Farina, Alberto},
				author={Valdinoci, Enrico},
				title={Rigidity results for elliptic PDEs with uniform limits: an
					abstract framework with applications},
				journal={Indiana Univ. Math. J.},
				volume={60},
				date={2011},
				number={1},
				pages={121--141},
				issn={0022-2518},
				review={\MR{2952412}},
				doi={10.1512/iumj.2011.60.4433},
			}
			
			\bib{MR3488250}{article}{
				author={Farina, Alberto},
				author={Valdinoci, Enrico},
				title={1D symmetry for semilinear PDEs from the limit interface of the
					solution},
				journal={Comm. Partial Differential Equations},
				volume={41},
				date={2016},
				number={4},
				pages={665--682},
				issn={0360-5302},
				review={\MR{3488250}},
				doi={10.1080/03605302.2015.1135165},
			}
			
			\bib{zbMATH07813655}{book}{
				author={Fern\'{a}ndez-Real, Xavier},
				author={Ros-Oton, Xavier},
				title={Integro-differential elliptic equations},
				series={Progress in Mathematics},
				volume={350},
				publisher={Birkh\"{a}user/Springer, Cham},
				date={2024},
				pages={xvi+395},
				isbn={978-3-031-54241-1},
				isbn={978-3-031-54242-8},
				review={\MR{4769823}},
				doi={10.1007/978-3-031-54242-8},
			}
			
			\bib{MR2461257}{article}{
				author={Ferrari, Fausto},
				author={Valdinoci, Enrico},
				title={A geometric inequality in the Heisenberg group and its
					applications to stable solutions of semilinear problems},
				journal={Math. Ann.},
				volume={343},
				date={2009},
				number={2},
				pages={351--370},
				issn={0025-5831},
				review={\MR{2461257}},
				doi={10.1007/s00208-008-0274-8},
			}
			
			\bib{MR4050103}{article}{
				author={Figalli, Alessio},
				author={Serra, Joaquim},
				title={On stable solutions for boundary reactions: a De Giorgi-type
					result in dimension $4+1$},
				journal={Invent. Math.},
				volume={219},
				date={2020},
				number={1},
				pages={153--177},
				issn={0020-9910},
				review={\MR{4050103}},
				doi={10.1007/s00222-019-00904-2},
			}
			
			\bib{MR3056307}{article}{
   author={Gal\'{e}, Jos\'{e} E.},
   author={Miana, Pedro J.},
   author={Stinga, Pablo Ra\'{u}l},
   title={Extension problem and fractional operators: semigroups and wave
   equations},
   journal={J. Evol. Equ.},
   volume={13},
   date={2013},
   number={2},
   pages={343--368},
   issn={1424-3199},
   review={\MR{3056307}},
   doi={10.1007/s00028-013-0182-6},
}
			
			\bib{MR1637919}{article}{
				author={Ghoussoub, Nassif},
				author={Gui, Changfeng},
				title={On a conjecture of De Giorgi and some related problems},
				journal={Math. Ann.},
				volume={311},
				date={1998},
				number={3},
				pages={481--491},
				issn={0025-5831},
				review={\MR{1637919}},
				doi={10.1007/s002080050196},
			}
			\bib{MR3243734}{book}{
				author={Grafakos, Loukas},
				title={Classical Fourier analysis},
				series={Graduate Texts in Mathematics},
				volume={249},
				edition={3},
				publisher={Springer, New York},
				date={2014},
				pages={xviii+638},
				isbn={978-1-4939-1193-6},
				isbn={978-1-4939-1194-3},
				review={\MR{3243734}},
				doi={10.1007/978-1-4939-1194-3},
			}
			
			
			\bib{MR247668}{article}{
				author={Mol\v{c}anov, S. A.},
				author={Ostrovski\u{\i}, E.},
				title={Symmetric stable processes as traces of degenerate diffusion
					processes},
				language={Russian, with English summary},
				journal={Teor. Verojatnost. i Primenen.},
				volume={14},
				date={1969},
				pages={127--130},
				issn={0040-361x},
				review={\MR{247668}},
			}
			
			\bib{MR2480601}{article}{
				author={Savin, Ovidiu},
				title={Regularity of flat level sets in phase transitions},
				journal={Ann. of Math. (2)},
				volume={169},
				date={2009},
				number={1},
				pages={41--78},
				issn={0003-486X},
				review={\MR{2480601}},
				doi={10.4007/annals.2009.169.41},
			}
			
			\bib{MR3812860}{article}{
				author={Savin, Ovidiu},
				title={Rigidity of minimizers in nonlocal phase transitions},
				journal={Anal. PDE},
				volume={11},
				date={2018},
				number={8},
				pages={1881--1900},
				issn={2157-5045},
				review={\MR{3812860}},
				doi={10.2140/apde.2018.11.1881},
			}
			
			\bib{MR3939768}{article}{
				author={Savin, Ovidiu},
				title={Rigidity of minimizers in nonlocal phase transitions II},
				journal={Anal. Theory Appl.},
				volume={35},
				date={2019},
				number={1},
				pages={1--27},
				issn={1672-4070},
				review={\MR{3939768}},
				doi={10.4208/ata.oa-0008},
			}
			
			\bib{MR3233760}{article}{
				author={Servadei, Raffaella},
				author={Valdinoci, Enrico},
				title={On the spectrum of two different fractional operators},
				journal={Proc. Roy. Soc. Edinburgh Sect. A},
				volume={144},
				date={2014},
				number={4},
				pages={831--855},
				issn={0308-2105},
				review={\MR{3233760}},
				doi={10.1017/S0308210512001783},
			}
			
			\bib{MR2498561}{article}{
				author={Sire, Yannick},
				author={Valdinoci, Enrico},
				title={Fractional Laplacian phase transitions and boundary reactions: a
					geometric inequality and a symmetry result},
				journal={J. Funct. Anal.},
				volume={256},
				date={2009},
				number={6},
				pages={1842--1864},
				issn={0022-1236},
				review={\MR{2498561}},
				doi={10.1016/j.jfa.2009.01.020},
			}
			
			\bib{MR2560300}{article}{
				author={Sire, Yannick},
				author={Valdinoci, Enrico},
				title={Rigidity results for some boundary quasilinear phase transitions},
				journal={Comm. Partial Differential Equations},
				volume={34},
				date={2009},
				number={7-9},
				pages={765--784},
				issn={0360-5302},
				review={\MR{2560300}},
				doi={10.1080/03605300902892402},
			}
			
			\bib{MR2754080}{article}{
   author={Stinga, Pablo Ra\'{u}l},
   author={Torrea, Jos\'{e} Luis},
   title={Extension problem and Harnack's inequality for some fractional
   operators},
   journal={Comm. Partial Differential Equations},
   volume={35},
   date={2010},
   number={11},
   pages={2092--2122},
   issn={0360-5302},
   review={\MR{2754080}},
   doi={10.1080/03605301003735680},
}
			
			\bib{MR3709888}{article}{
   author={Stinga, Pablo Ra\'{u}l},
   author={Torrea, Jos\'{e} L.},
   title={Regularity theory and extension problem for fractional nonlocal
   parabolic equations and the master equation},
   journal={SIAM J. Math. Anal.},
   volume={49},
   date={2017},
   number={5},
   pages={3893--3924},
   issn={0036-1410},
   review={\MR{3709888}},
   doi={10.1137/16M1104317},
}
			
			\bib{MR2228294}{article}{
				author={Valdinoci, Enrico},
				author={Sciunzi, Berardino},
				author={Savin, Vasile Ovidiu},
				title={Flat level set regularity of $p$-Laplace phase transitions},
				journal={Mem. Amer. Math. Soc.},
				volume={182},
				date={2006},
				number={858},
				pages={vi+144},
				issn={0065-9266},
				review={\MR{2228294}},
				doi={10.1090/memo/0858},
			}

		\end{biblist}
	\end{bibdiv}
	
\end{document}